\newcommand{\leqnomode}{\tagsleft@true\let\veqno\@@leqno}
\newcommand{\reqnomode}{\tagsleft@false\let\veqno\@@eqno}
\newcommand{\mylabel}[2]{#2\def\@currentlabel{#2}\label{#1}}
\setlist[description]{leftmargin=*}
\newcolumntype{L}{>{\RaggedRight\arraybackslash}X}
\definecolor{newblue}{RGB}{0,102,204}
\definecolor{newred}{RGB}{206,32,41}
\newtheorem{theorem}{Theorem}[section]
\newtheorem{proposition}[theorem]{Proposition}
\newtheorem{corollary}[theorem]{Corollary}
\newtheorem*{A}{Theorem A}
\newtheorem*{B}{Theorem B}
\newtheorem*{C}{Corollary C}
\theoremstyle{definition}
\newtheorem{remark}[theorem]{Remark}
\newtheorem{definition}[theorem]{Definition}
\renewcommand{\setminus}{-}
\newcommand{\mZ}{\mathbb{Z}}
\renewcommand{\to}{\longrightarrow}
\renewcommand{\t}{\mathsf{t}}
\renewcommand{\r}{\mathsf{r}}
\newcommand{\z}{\mathsf{z}}
\renewcommand{\S}{\mathfrak{S}}
\definecolor{codegreen}{rgb}{0,0.6,0}
\definecolor{codegray}{rgb}{0.5,0.5,0.5}
\definecolor{codepurple}{rgb}{0.58,0,0.82}
\definecolor{backcolour}{rgb}{0.95,0.95,0.92}
\lstdefinestyle{mystyle}{
    backgroundcolor=\color{backcolour},   
    commentstyle=\color{codegreen},
    keywordstyle=\color{magenta},
    numberstyle=\tiny\color{codegray},
    stringstyle=\color{codepurple},
    basicstyle=\ttfamily\footnotesize,
    breakatwhitespace=false,         
    breaklines=true,                 
    captionpos=t,                    
    keepspaces=true,                 
    numbers=left,                    
    numbersep=5pt,                  
    showspaces=false,                
    showstringspaces=false,
    showtabs=false,                  
    tabsize=2
}
\title[Groups of order $64$ and non-homeomorphic double Kodaira
fibrations]{Groups of order $64$ and non-homeomorphic double Kodaira fibrations
with the same biregular invariants}
\date{}
\author[Francesco Polizzi]{Francesco Polizzi $^{*}$}
\address{Dipartimento di Matematica e Applicazioni ``Renato Caccioppoli"
  \newline\indent
  Universit\`a degli Studi di Napoli ``Federico II"
  \newline\indent
Via Cintia, Monte S. Angelo
  \newline\indent
  I-80126  Napoli, Italy}
\email{francesco.polizzi@unina.it}
\author[Pietro Sabatino]{Pietro Sabatino}
\address{Institute for High Performance Computing and Networking (ICAR-CNR)
    \newline\indent
via P. Bucci 8/9C
\newline\indent
87036 Rende (CS), Italy}
\email{pietro.sabatino@icar.cnr.it}
\thanks{\emph{2010 Mathematics Subject
Classification.} 14J29, 14J25, 20D15}
\keywords{Surface braid groups, Finite groups, Kodaira fibrations}
\begin{document}


\begin{abstract}
Let $\Sigma_b$ be a closed Riemann surface of genus $b$. We investigate finite quotients $G$ of the pure braid group on two strands $\mathsf{P}_2(\Sigma_b)$ which do not factor through $\pi_1(\Sigma_b \times \Sigma_b)$. Building on our previous work on some special systems of generators on finite groups that we called \emph{diagonal double Kodaira structures}, we prove that, if $G$ has not order $32$, then $|G| \geq 64$, and we completely classify the cases where equality holds.  
In the last section, as a geometric application of our algebraic results, we construct two $3$-dimensional families of double Kodaira fibrations having the same biregular invariants and the same Betti numbers but different fundamental group. 
\end{abstract}

\maketitle



\setcounter{section}{-1}

\section{Introduction} \label{sec:intro}

A \emph{Kodaira fibration} is a smooth, connected holomorphic fibration $f_1 \colon S \to B_1$, where $S$ is a compact complex surface and $B_1$ is a compact closed curve, which is \emph{non-isotrivial}, meaning that the fibres are not all biholomorphic to one another. The genus of the base curve $B_1$, denoted by $b_1 := g(B_1)$, is called the \emph{base genus}, while the genus of any fibre $F$, denoted $g := g(F)$, is known as the \emph{fibre genus}. A surface $S$ occurring as the total space of a Kodaira fibration is called a \emph{Kodaira fibred surface}. For any Kodaira fibration, the inequalities $b_1 \ge 2$ and $g \geq 3$ are always satisfied, as established in \cite[Theorem 1.1]{Kas68}. Because of the restriction on the base genus, the surface $S$ cannot contain rational or elliptic curves, implying that $S$ is minimal. Furthermore, by the sub-additivity of the Kodaira dimension, $S$ is a surface of general type and is thus algebraic.

Kodaira fibred surfaces raise both fascinating and challenging questions that lie at the intersection of the algebro-geometric properties of compact complex surfaces and the topological aspects of the underlying closed, oriented $4$-manifolds. These surfaces can be investigated not only through traditional algebraic geometry tools but also by applying techniques from geometric topology. Notable methods include the Meyer signature formula, Birman-Hilden relations in the mapping class group, and the decomposition of Lefschetz fibrations, as outlined in \cite{En98, EKKOS02, St02, L17}. For an extensive overview and additional references, we suggest consulting \cite{Cat17}.

The classical example introduced by Kodaira (see \cite[Chapter V, Section 14]{BHPV03}), along with its variants described in \cite{At69, Hir69}, are constructed via ramified coverings of product of curves, thus providing a pair of Kodaira fibrations in the same surface. This leads to the concept of a \emph{double Kodaira fibration}, which is formally defined as follows \cite{Zaal95, LeBrun00, BDS01, BD02, CatRol09, Rol10, LLR17}:

\begin{definition}
  A \emph{double Kodaira surface} is a compact, complex surface $S$, endowed
  with a \emph{double Kodaira fibration}, namely a surjective, holomorphic map
  $f \colon S \to B_1 \times B_2$ yielding, by composition with the natural
  projections, two Kodaira fibrations $f_i \colon S \to B_i$, $i=1, \, 2$.
\end{definition}

In the sequel, we will detail our strategy for constructing double Kodaira fibrations by utilizing the techniques introduced in \cite{CaPol19, PolSab22}, along with presenting our new results. Our primary approach involves the ``detopologization" of the problem, namely, transforming it into a purely algebraic one. This will be applied in the context of \emph{diagonal} double Kodaira fibrations: these are defined as Stein factorizations $f \colon S \to \Sigma_{b_1} \times \Sigma_{b_2}$ of finite Galois covers of the form 
\begin{equation} \label{eq:intro-diagonal}
  \mathbf{f} \colon S \to \Sigma_b \times \Sigma_b,
\end{equation}
branched with order $n \geq 2$ over the diagonal $\Delta \subset \Sigma_b
\times \Sigma_b$, where $\Sigma_b$ stands for a closed Riemann surface of genus $b$. According to the Grauert-Remmert extension theorem and Serre's GAGA principle, the existence of a $G$-cover $\mathbf{f}$ as in \eqref{eq:intro-diagonal} corresponds to the existence of a group epimorphism 
\begin{equation} \label{eq:intr-varphi}
  \varphi \colon \pi_1(\Sigma_b \times \Sigma_b - \Delta) \to G,
\end{equation}
and the condition that $\mathbf{f}$ branches with order $n$ along $\Delta$ can be expressed by requiring that $\varphi(\gamma_{\Delta})$ has order $n$ in $G$, where $\gamma_{\Delta}$ represents a loop in $\Sigma_b \times \Sigma_b - \Delta$ that encircles the diagonal. The condition $n \geq 2$ means  that $\varphi$ does not factor through $\pi_1(\Sigma_b \times \Sigma_b)$ and, crucially, it also implies that $G$ must be non-abelian, since $\gamma_{\Delta}$ is a non-trivial commutator in $\pi_1(\Sigma_b \times \Sigma_b - \Delta)$.  We refer to such epimorphisms, for which $\varphi(\gamma_{\Delta})$ is non-trivial, as \emph{admissible}.

Recall that the fundamental group $\pi_1(\Sigma_b \times \Sigma_b - \Delta)$ is isomorphic to $\mathsf{P}_2(\Sigma_b)$, the pure braid group with two strands on $\Sigma_b$. This group has a finite geometric presentation with $4b+1$ generators, as proved in \cite[Theorem 7]{GG04}. By applying an admissible epimorphism to these generators, we get an ordered sequence
\begin{equation}
\mathfrak{S} = (\r_{11}, \, \t_{11}, \ldots, \r_{1b}, \, \t_{1b}, \, \r_{21}, \, \t_{21}, \ldots, \r_{2b}, \, \t_{2b}, \, \z),
\end{equation}
consisting of $4b+1$ elements that generate $G$. Among these, $\z$ is a commutator satisfying $o(\z) = n$, and the set is constrained by a certain set of relations coming from the relations of $\mathsf{P}_2(\Sigma_b)$.  This configuration is referred to as a \emph{diagonal double Kodaira structure} of type $(b, \, n)$ on the group $G$. Hence, what begins as the geometric problem of constructing an admissible epimorphism onto $G$ is ultimately converted into the combinatorial and algebraic task of finding a diagonal double Kodaira structure of type $(b, \, n)$ on $G$. 

Via the composition with the two natural projections, and up to a simultaneous Stein factorization, the $G$-cover $\mathbf{f} \colon S \to \Sigma_b \times \Sigma_b$ provides a double Kodaira structure on $S$, called a \emph{diagonal double Kodaira fibration}. In the light of the previous considerations, classifying diagonal double Kodaira fibrations is equivalent to describe finite groups which admit a diagonal double Kodaira structure.  

In \cite{PolSab22} we solved this problem for groups of order at most $32$, showing that the only groups in this range having diagonal double Kodaira structures are the two extra-special groups $G(32, \, 49)$ and $G(32, \, 50)$, and we computed the number of structures of type $(2, \, 2)$  in each case (see Proposition \ref{prop:order32}). 

In this work, that is intended as a sequel to \cite{PolSab22}, we analize groups of order at most $64$. It turns out that there are no groups $G$ with order $33 \leq |G| \leq 63$ admitting double Kodaira structures, see Proposition \ref{prop: no_ddks_less_63}. Our first result concerns the complete classification of  the cases where  $|G|=64$, see Theorem \ref{thm:groups_order64_with_structures}:

\begin{A}
Let $G$ be a group of order $64$ admitting a diagonal double Kodaira structure. The following holds:
\begin{itemize}
\item if $G$ is non-monolithic then it is isomorphic to $G(64, \, t)$, with $t \in \{ 199, \, 200, \,  201, \, 264,\,  265 \};$ 
\item if $G$ is monolithic, then it is isomorphic to $G(64, \, t)$, with $t \in \{249, \, 266\}$. 
\end{itemize}
\end{A}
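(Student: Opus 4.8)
The plan is to reduce the classification to a finite computation. Recall that a diagonal double Kodaira structure of type $(b,n)$ on $G$ is an ordered generating sequence $\mathfrak{S}$ satisfying the relations inherited from $\mathsf{P}_2(\Sigma_b)$, together with the condition that the distinguished element $\z$ is a non-trivial commutator of order $n\geq 2$. Since $\gamma_\Delta$ is a product of commutators, any such $G$ must be non-abelian with non-trivial derived subgroup; moreover the results of \cite{PolSab22} (and the companion Proposition stating $|G|\geq 64$ when $|G|\neq 32$) tell us that for $|G|=64$ we are looking precisely at the threshold case. So the first step is to fix $b$ and $n$ at their minimal admissible values — by the earlier analysis one expects $b=2$, $n=2$ — and observe that the defining relations of $\mathsf{P}_2(\Sigma_2)$ from \cite[Theorem 7]{GG04} become a finite system of word equations in the $4b+1=9$ unknown elements $\r_{1j},\t_{1j},\r_{2j},\t_{2j},\z$ of $G$.

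**Carrying out the computation.**

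First I would enumerate the $267$ isomorphism types of groups of order $64$ from the Small Groups Library and discard immediately those that cannot carry a structure for cheap structural reasons: abelian groups, and groups whose derived subgroup or center is too small to contain an element expressible as the relevant commutator of order $n$. For each surviving $G$, the second step is to run, via \texttt{GAP}, an exhaustive search over ordered tuples $(\r_{11},\t_{11},\r_{12},\t_{12},\r_{21},\t_{21},\r_{22},\t_{22})$ that generate $G$, checking whether the braid relations can be satisfied with an appropriate choice of $\z$ and whether $\z$ then has order $2$ and lies in the set of commutators — this is exactly the kind of finite verification already performed in \cite{PolSab22} for order $32$, so the algorithm is available and just needs to be re-applied. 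The third step is to separate the output according to whether $G$ is monolithic (unique minimal normal subgroup) or not, since this dichotomy organizes the list cleanly and will later feed into the geometric application; one records that the non-monolithic solutions are exactly $G(64,t)$ for $t\in\{199,200,201,264,265\}$ and the monolithic ones are $G(64,t)$ for $t\in\{249,266\}$.

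**The main obstacle.**

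The principal difficulty is not conceptual but computational: a naive search over all generating $8$-tuples of a group of order $64$ is of size roughly $64^8$, which is infeasible, so the real work is in pruning. I expect to exploit the braid relations incrementally — fixing $\r_{11}$ up to conjugacy, then constraining $\t_{11}$ by the relations it must satisfy with $\r_{11}$, and so on — together with the action of $\mathrm{Aut}(G)$ to cut the search space down to a tractable size, much as was done in the order-$32$ case. A secondary subtlety is correctness of the presentation: one must be careful to use the correct surface braid group relations for $b=2$ (the genus enters the relations nontrivially), and to verify that an abstract solution of the word equations really does come from an admissible epimorphism, i.e. that $\varphi(\gamma_\Delta)=\z$ is genuinely non-trivial and not accidentally killed. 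Once the search terminates, the classification statement follows by inspection of the resulting list, and consistency can be double-checked against the count of structures that Theorem B (referenced but not yet stated in the excerpt) presumably records.
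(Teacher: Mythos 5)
There is a genuine gap: you fix the type $(b,\,n)=(2,\,2)$ at the outset (``by the earlier analysis one expects $b=2$, $n=2$''), but the statement classifies groups of order $64$ admitting a diagonal double Kodaira structure of \emph{any} type $(b,\,n)$. A tuple search certifying that a given $G$ has no $(2,\,2)$-structure does not exclude structures with $b\geq 3$, nor with $n>2$ (and among order-$64$ groups there are candidates with $[G,\,G]$ strictly larger than $\mathbb{Z}_2$, for which $n>2$ cannot be dismissed a priori; your preliminary ``discard groups with small derived subgroup'' step goes in the opposite direction). The paper closes exactly this gap with two theoretical reductions that your proposal does not supply. First, the notion of \emph{prestructure}: any structure, of whatever type, restricts to a prestructure, and CCT-groups admit none (Remark \ref{rmk:cct-no-prestructure}), so all CCT-groups are excluded for every $(b,\,n)$ at once. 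Second, the monolithic/lifting argument (Propositions \ref{prop:monolithic-argument} and \ref{prop:order-64}): a structure on a non-monolithic group descends to a proper quotient, and since by Proposition \ref{prop:order32} the only groups of order $<64$ carrying structures are the two extra-special groups of order $32$, a non-monolithic candidate must have an extra-special quotient of order $32$; for the monolithic candidates one gets $\z\in\operatorname{mon}(G)\simeq\mathbb{Z}_2$, which is what legitimately pins down $n=2$. In your plan the monolithic/non-monolithic dichotomy appears only as bookkeeping after the search, so it does no logical work.

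A secondary point concerns feasibility and organization. The paper does not run a search over all $267$ groups: the CCT and monolithic reductions leave $68$ candidates before any structure search, and only these are fed to the \texttt{GAP4} instruction \texttt{CheckStructures}; moreover, for the five non-monolithic survivors the structures are exhibited (and counted) as the $2^8$ central liftings of structures on the order-$32$ extra-special quotients, with genuinely new searches needed essentially for $G(64,\,249)$ and $G(64,\,266)$. Your incremental pruning together with the $\operatorname{Aut}(G)$-action may well make a per-group search tractable, but this is asserted rather than argued, and even if it succeeds it only settles the $(2,\,2)$ case, which is strictly weaker than the statement to be proved.
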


We are also able to provide the number of diagonal double Kodaira structures with $b=2$ in each of these situations. For all of them we have $n=2$, because we always  get $[G, \, G]=\mathbb{Z}_2$. Furthermore, in the non-monolithic cases, all diagonal double Kodaira structures can be derived as lifts from structures on an extra-special quotient of order 32. In contrast, the structures appearing in the monolithic case are genuinely ``new'', making the monolithic cases particularly interesting from this perspective. 

As a geometric application of our algebraic results, we can describe the biregular and topological invariants of all diagonal double Kodaira fibrations associated with these structure, see Theorem \ref{thm:H1 in order 64}:

\begin{B}
Let let $\mathbf{f} \colon S \to \Sigma_2 \times \Sigma_2$ be the $G$-cover associated with a diagonal double Kodaira structure of type $(b,
  \, n)=(2, \, 2)$ on a finite group $G$ of order $64$, and let $f \colon S \to \Sigma_{b_1} \times \Sigma_{b_2}$ be the corresponding  diagonal double Kodaira
  fibration. Then the occurrences for $H_1(S, \, \mathbb{Z})$, $q(S)= \frac{1}{2} \operatorname{rank} \, H_1(S, \, \mathbb{Z})$, $K_S^2$, $c_2(S)$, $\sigma(S)$, $b_1$, $b_2$, $g_1$, $g_2$ are as in the following table. 
  \begin{table}[H]
  \begin{center}
    \begin{tabularx}{0.85\textwidth}{@{}cccccccccc@{}}
\toprule
 $\mathrm{IdSmallGroup}(G)$ & $H_1(S, \, \mathbb{Z})$ & $q(S)$ & $K_S^2$ & $c_2(S)$ & $\sigma(S)$ & $b_1$ & $b_2$ & $g_1$ & $g_2$ \\
\toprule
 $G(64, \, 199)$ & $\mathbb{Z}^8 \oplus (\mathbb{Z}_2)^4$ & $4$ & $736$ & $320$ & $32$ & $2$ & $2$ & $81$ & $81$ \\
$G(64, \, 200)$ & $\mathbb{Z}^8 \oplus (\mathbb{Z}_2)^4$ & $4$ & $736$ & $320$ & $32$ & $2$ & $2$ & $81$ & $81$ \\
 $G(64, \, 201)$ & $\mathbb{Z}^8 \oplus (\mathbb{Z}_2)^4$ & $4$ & $736$ & $320$ & $32$ & $2$ & $2$ & $81$ & $81$ \\
 $G(64, \, 264)$ & $\mathbb{Z}^{12} \oplus (\mathbb{Z}_2)^3$ & $6$ & $736$ & $320$ & $32$ & $3$ & $3$ & $41$ & $41$\\   
 $G(64, \, 265)$ & $\mathbb{Z}^{12} \oplus (\mathbb{Z}_2)^3$ & $6$& $736$ & $320$ & $32$ & $3$ & $3$ & $41$ & $41$\\   
 $G(64, \, 249)$ & $\mathbb{Z}^8 \oplus (\mathbb{Z}_2)^4$ & $4$  & $736$ & $320$ & $32$ & $2$ & $2$ & $81$ & $81$ \\
 $G(64, \, 266)$ & $\mathbb{Z}^{12} \oplus (\mathbb{Z}_2)^3$ & $6$ & $736$ & $320$ & $32$ & $3$ & $3$ & $41$ & $41$\\   
$G(64, \, 266)$  & $\mathbb{Z}^{12} \oplus (\mathbb{Z}_2)^2 \oplus \mathbb{Z}_4$ & $6$ & $736$ & $320$ & $32$ & $3$ & $3$ & $41$ & $41$\\   
\bottomrule
\end{tabularx}
  \end{center}
\end{table}
\end{B}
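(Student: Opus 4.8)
The plan is to feed each of the seven groups listed in Theorem~A (with $G(64,266)$ accounting for two rows) into the standard procedure attaching numerical and homological invariants to a diagonal double Kodaira structure, and to observe that almost all of the output is forced by the triple $(|G|,b,n)=(64,2,2)$. First, recall from Theorem~A that $[G,G]\cong\mathbb{Z}_2$ in every case; since $\z=\varphi(\gamma_\Delta)$ is a nontrivial commutator with $o(\z)=n$, it must be the unique involution of $[G,G]$, hence $n=2$ and we are dealing with structures of type $(2,2)$. Now $\mathbf f\colon S\to X:=\Sigma_b\times\Sigma_b$ is a $G$-cover branched of order $n$ along the smooth curve $\Delta\cong\Sigma_b$, for which $\Delta^2=2-2b$, $K_X\cdot\Delta=2(2b-2)$, $K_X^2=8(b-1)^2$ and $\chi_{\mathrm{top}}(X)=(2-2b)^2$. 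The ramification formula $K_S=\mathbf f^*\bigl(K_X+(1-\tfrac1n)\Delta\bigr)$ gives
\[
K_S^2=|G|\bigl(K_X^2+2(1-\tfrac1n)K_X\!\cdot\!\Delta+(1-\tfrac1n)^2\Delta^2\bigr),\qquad c_2(S)=|G|\bigl(\chi_{\mathrm{top}}(X)-(1-\tfrac1n)\chi_{\mathrm{top}}(\Delta)\bigr),
\]
together with $\sigma(S)=\tfrac13\bigl(K_S^2-2c_2(S)\bigr)$; substituting $(|G|,b,n)=(64,2,2)$ yields $K_S^2=736$, $c_2(S)=320$ and $\sigma(S)=32$ uniformly. (If not already recorded in \cite{CaPol19,PolSab22}, these identities follow from the computation just sketched.) Thus the only columns that can vary are $b_i$, $g_i$ and $H_1(S,\mathbb{Z})$.

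To determine $b_i$ and $g_i$, let $H_1,H_2\le G$ be the images under $\varphi$ of the two families of generators $(\r_{1j},\t_{1j})_j$ and $(\r_{2j},\t_{2j})_j$, and write $p_1,p_2\colon X\to\Sigma_b$ for the projections. For each of the two Kodaira fibrations, the Stein factorization $S\to B_i\to\Sigma_b$ of the corresponding composition $p_j\circ\mathbf f$ exhibits $B_i\to\Sigma_b$ as an étale cover whose degree is the index of the relevant transverse subgroup $H$ (the number of connected components of a fibre of $p_j\circ\mathbf f$ is locally constant and equal to that index, since all fibres of $p_j$ meet $\Delta$ transversally at one point), so $b_i=1+[G:H](b-1)$; and a connected fibre of $f_i$ is an $H$-cover of $\Sigma_b$ branched over a single point with local monodromy $\z$ of order $n$, whence $2g_i-2=|H|\bigl(2b-1-\tfrac1n\bigr)$ by Riemann--Hurwitz. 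From the explicit generating sequences produced when proving Theorem~A one reads off that $|H_1|=|H_2|$ equals $|G|=64$ for $t\in\{199,200,201,249\}$, giving $[G:H]=1$, $b_i=2$, $g_i=81$, and equals $32$ for $t\in\{264,265,266\}$, giving $[G:H]=2$, $b_i=3$, $g_i=41$. This is exactly the last four columns.

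It remains to compute $H_1(S,\mathbb{Z})$, and then $q(S)=\tfrac12\operatorname{rank}H_1(S,\mathbb{Z})$. Over $X-\Delta$ the cover $\mathbf f$ is unramified, so $\pi_1\bigl(S-\mathbf f^{-1}(\Delta)\bigr)\cong\ker\bigl(\varphi\colon\mathsf P_2(\Sigma_b)\to G\bigr)$, and $\pi_1(S)$ is the quotient of this kernel by the normal subgroup generated by the $\mathsf P_2(\Sigma_b)$-conjugates of $\gamma_\Delta^{\,n}$ (the meridians of the components of $\mathbf f^{-1}(\Delta)$). Starting from the presentation of $\mathsf P_2(\Sigma_b)$ with $4b+1=9$ generators and applying Reidemeister--Schreier to the index-$64$ subgroup $\ker\varphi$, one obtains a finite presentation of $\pi_1(S)$; abelianising and passing to Smith normal form --- a sizeable but purely mechanical computation, to be carried out with \textsc{GAP}/\textsc{Magma} --- produces the second column. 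One checks that $\operatorname{rank}H_1(S,\mathbb{Z})=2(b_1+b_2)$ in every case, so $q(S)=b_1+b_2$ with no excess irregularity (matching the third column), that the torsion is $(\mathbb{Z}_2)^4$ for $t\in\{199,200,201,249\}$ and $(\mathbb{Z}_2)^3$ for the structures on $G(64,264)$, $G(64,265)$ and one structure on $G(64,266)$, while a second, inequivalent structure on $G(64,266)$ yields torsion $(\mathbb{Z}_2)^2\oplus\mathbb{Z}_4$. Running this over the complete list of structures of type $(2,2)$ enumerated in the analysis leading to Theorem~A shows that the distinct invariant tuples obtained are precisely the rows of the table.

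I expect the last step to be the real obstacle: not the intersection-theoretic identities, which are rigid, but keeping the presentation of $\pi_1(S)$ under control --- correctly locating the meridians $\gamma_\Delta^{\,n}$ and their conjugates inside $\ker\varphi$, and making the Reidemeister--Schreier rewriting of a rank-$9$ group over an index-$64$ subgroup computationally feasible --- together with, for the intended geometric application, certifying that the two structures on $G(64,266)$ producing the $(\mathbb{Z}_2)^3$ and $(\mathbb{Z}_2)^2\oplus\mathbb{Z}_4$ rows are genuinely inequivalent although they share every invariant in the table except $H_1(S,\mathbb{Z})$, so that the associated double Kodaira fibrations have non-isomorphic fundamental groups and are therefore non-homeomorphic.
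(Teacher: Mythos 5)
Your proposal is correct and follows essentially the same route as the paper: the values of $K_S^2$, $c_2(S)$, $\sigma(S)$ come from the standard branched-cover formulas (Proposition \ref{prop:invariant-S-G}), the genera $b_i$, $g_i$ are read off from $(|K_1|,\,|K_2|)$ recorded in Theorem \ref{thm:groups_order64_with_structures} via the \'etale Stein factorization and Riemann--Hurwitz, and $H_1(S,\,\mathbb{Z})$ is obtained by machine abelianization of $\pi_1(S)$, which you present as $\ker\varphi$ modulo the meridian conjugates of $\gamma_\Delta^{\,n}$ -- the same group the paper realizes as the kernel of $\bar{\varphi}\colon \mathsf{P}_2(\Sigma_2)^{\operatorname{orb}}\to G$ via Fox's theorem (Proposition \ref{prop:fundamental-group-branched-cover}) and computes with \texttt{GAP4}. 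The only cosmetic differences are that you re-derive the Chern-number formulas instead of citing them and phrase the $\pi_1$ computation via Reidemeister--Schreier rather than through the orbifold braid group, which amounts to the same computation.
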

Thus, in the case $G=(64,\, 266)$ there are two possibilities for the torsion part of $H_1(S, \, \mathbb{Z})$. Moreover, \emph{every} curve of genus $b=2$ can be used as a starting point for our construction, and so such a construction depends on $3$ parameters. This implies the following result, see Corollary \ref{cor:non-homotopically equivalent}:

\begin{C} 
There exists two $3$-dimensional families $\mathcal{F}_1$, $\mathcal{F}_2$ of Kodaira doubly fibred surfaces with $b_1=b_2=3$, $g_1=g_2=41$, $\sigma(S)=32$ such that  
\begin{itemize}
\item the surfaces in $\mathcal{F}_1$ and those in $\mathcal{F}_2$ have the same biregular invariants
\begin{equation}
p_g(S)=93, \; \; q(S)=6, \; \; K_S^2=736
\end{equation}
\item the surfaces in $\mathcal{F}_1$ and those in $\mathcal{F}_2$ have the same Betti numbers
\begin{equation}
\mathsf{b}_0=\mathsf{b}_4=1, \; \; \mathsf{b}_1=\mathsf{b}_3=12, \; \; \mathsf{b}_2=342
\end{equation}
\item the surfaces in $\mathcal{F}_1$ and those in $\mathcal{F}_2$ have different torsion part of $H_1(S, \, \mathbb{Z})$. In particular, they are not homotopically equivalent.
\end{itemize}
\end{C}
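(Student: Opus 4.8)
The plan is to deduce everything from Theorem~B applied to the single group $G = G(64,\,266)$, which by Theorem~A is one of the (monolithic) groups of order $64$ admitting a diagonal double Kodaira structure of type $(b,\,n) = (2,\,2)$.

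First I would fix such a structure, let $\mathbf{f} \colon S \to \Sigma_2 \times \Sigma_2$ be the associated $G$-cover and $f \colon S \to B_1 \times B_2$ the resulting diagonal double Kodaira fibration. By the last two rows of the table in Theorem~B, for every choice of structure one has $b_1 = b_2 = 3$, $g_1 = g_2 = 41$, $c_2(S) = 320$, $K_S^2 = 736$ and $\sigma(S) = 32$, whereas $H_1(S,\,\mathbb{Z})$ is isomorphic either to $\mathbb{Z}^{12} \oplus (\mathbb{Z}_2)^3$ or to $\mathbb{Z}^{12} \oplus (\mathbb{Z}_2)^2 \oplus \mathbb{Z}_4$ --- the table records $G(64,\,266)$ twice, once for each case, so both occur. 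Since $\pi_1(\Sigma_2 \times \Sigma_2 - \Delta) \cong \mathsf{P}_2(\Sigma_2)$ depends only on the topological type of $\Sigma_2$, the torsion subgroup of $H_1(S,\,\mathbb{Z})$ depends only on the chosen structure, not on the point of moduli. I would then define $\mathcal{F}_1$ (respectively $\mathcal{F}_2$) to be the family of surfaces $S$ obtained, as $\Sigma_2$ varies over all genus-$2$ curves, from a structure of the first (respectively second) type.

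Next I would recover the remaining invariants numerically. Noether's formula gives $\chi(\mathcal{O}_S) = \frac{1}{12}\bigl(K_S^2 + c_2(S)\bigr) = \frac{1056}{12} = 88$, hence $p_g(S) = \chi(\mathcal{O}_S) - 1 + q(S) = 88 - 1 + 6 = 93$; together with $q(S) = 6$ and $K_S^2 = 736$ this yields the claimed biregular invariants, which are the same for $\mathcal{F}_1$ and $\mathcal{F}_2$. For the Betti numbers, $\mathsf{b}_0 = \mathsf{b}_4 = 1$ and $\mathsf{b}_1 = \mathsf{b}_3 = 2\,q(S) = 12$ because $S$ is a connected smooth projective surface, while $c_2(S) = \mathsf{b}_0 - \mathsf{b}_1 + \mathsf{b}_2 - \mathsf{b}_3 + \mathsf{b}_4$ forces $\mathsf{b}_2 = c_2(S) + 22 = 342$; these too coincide across the two families.

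Finally I would settle the dimension count and draw the conclusion. The coarse moduli space $\mathcal{M}_2$ of smooth genus-$2$ curves has dimension $3$, and the construction behind Theorem~B accepts an arbitrary genus-$2$ curve as input $\Sigma_2$; since the bases $B_1,\,B_2$ of the two Kodaira fibrations carried by $S$ are covers of $\Sigma_2$ of fixed combinatorial type, a generic such $S$ determines $\Sigma_2$ up to finitely many possibilities, so the assignment $\Sigma_2 \mapsto S$ is generically finite onto its image and each $\mathcal{F}_j$ is genuinely $3$-dimensional. As $H_1(-,\,\mathbb{Z})$ is a homotopy invariant and $(\mathbb{Z}_2)^3 \not\cong (\mathbb{Z}_2)^2 \oplus \mathbb{Z}_4$, no surface in $\mathcal{F}_1$ is homotopy equivalent --- in particular, homeomorphic --- to any surface in $\mathcal{F}_2$. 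The only step requiring genuine care is this last dimension assertion, i.e. checking that moving $\Sigma_2$ in $\mathcal{M}_2$ really moves the isomorphism class of $S$; the rest is bookkeeping from Theorem~B, which we take for granted here.
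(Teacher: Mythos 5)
Your proposal is correct and follows essentially the same route as the paper: Corollary C is obtained directly from the two rows of the table in Theorem B corresponding to $G=G(64,\,266)$, letting the genus-$2$ curve $\Sigma_2$ vary to produce the two $3$-parameter families, with the biregular invariants and Betti numbers recovered by the standard bookkeeping ($\chi(\mathcal{O}_S)=\frac{1}{12}(K_S^2+c_2)=88$, $p_g=93$, $\mathsf{b}_1=2q=12$, $\mathsf{b}_2=342$) and non-homotopy-equivalence from the differing torsion in $H_1(S,\,\mathbb{Z})$. Your extra care about the dimension count (generic finiteness of $\Sigma_2\mapsto S$ via the finitely many fibrations of $S$ and the finitely many genus-$2$ quotients of their bases) is a sound filling-in of a point the paper passes over with the remark that every genus-$2$ curve can serve as the starting datum.
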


As far as we know, this yields the first explicit example of positive-dimensional families of (doubly) fibred Kodaira surfaces having the same base and fibre genera, the same biregular invariants, the same Betti numbers and different fundamental group, see Remark \ref{rem:first occurrence}.

Let us briefly discuss the computational tools we used. We successfully avoided the use of the computer for groups of order less than $64$, with the exception of two groups of order $54$. However, when investigating groups of order 64, the use of the computational algebra system \verb|GAP4| (see \cite{GAP4})  became indispensable, because there are too many of them. So we implemented an instruction that we called \verb|CheckStructures| and that we used to perform some of our calculations.  For the convenience of the reader, we provide all relevant scripts in a repository on GitHub, see \cite{GAP4DDKS}.

This work is organized as follows. Sections \ref{sec:group-prel-I} and \ref{sec:group-prel-II} cover the algebraic preliminaries necessary for the rest of the paper, focusing in particular on CCT groups and monolithic groups. The main point is that CCT groups cannot admit diagonal double Kodaira structures (Remark \ref{rmk:cct-no-prestructure}), whereas all structures on a non-monolithic group are liftings of structures on a proper quotient (Proposition \ref{prop:monolithic-argument}). In our research we can therefore exclude both non-CCT groups and non-monolithic groups without quotients with structures. This rules out all groups of order less than $64$, except for two of them, and all groups of order $64$, except for $68$ of them.

In Section \ref{sec:DDKS} we define diagonal double Kodaira structures and explain their relationship to their geometric topology counterparts, namely admissible epimorphisms from pure surface braid groups to finite groups. We also prove that there are no diagonal double Kodaira structures in groups of order $33 \leq |G| \leq 63$.

Finally, in Section \ref{sec:order_64} we present  Theorem A, whereas Section \ref{sec:first_homology} contains Theorem B and Corollary C.

The paper ends with an appendix, where we collect the presentations for the non-abelian groups of orders 36, 40, 48, 54, 56, and 60 that were used in our calculations.

\bigskip \bigskip

$\mathbf{Notation \; and \; conventions}$.

If $S$ is a smooth projective surface over $\mathbb{C}$, then $c_1(S)$, $c_2(S)$ denote the first and second Chern class of its tangent bundle $T_S$, respectively. Moreover, $K_S$ is the canonical class, $p_g(S)=h^0(S,
\, K_S)$ is the \emph{geometric genus}, $q(S)=h^1(S, \, K_S)$ is the
\emph{irregularity} and $\sigma(S)$ is the $\emph{signature}$.

Throughout the paper we use the following notation for
groups:
\begin{itemize}
  \item $\mZ_n$: cyclic group of order $n$.
  \item $G=N \rtimes Q$: semi-direct product of $N$ and $Q$, namely, split
    extension of $Q$ by $N$, where $N$ is normal in $G$.
  \item $G=N.Q$: non-split extension of $Q$ by $N$.
  \item $\operatorname{Aut}(G)$: the automorphism group of $G$.
  \item $\mathsf{D}_{p, \, q, \, r}=\mathbb{Z}_q \rtimes \mathbb{Z}_p=
    \langle
    x, \, y \; |
    \; x^p=y^q=1, \; xyx^{-1}=y^r \rangle$: split metacyclic
    group of order
    $pq$. The group
    $\mathsf{D}_{2, \, n, \, -1}$ is the dihedral group of
    order $2n$ and will
    be denoted by $\mathsf{D}_{2n}$.
  \item If $n$ is an integer greater or equal to $4$, we denote by
    $\mathsf{QD}_{2^n}$ the quasi-dihedral group of order
    $2^n$, having
    presentation
    \begin{equation}
      \mathsf{QD}_{2^n} := \langle x, \, y \mid
      x^2=y^{2^{n-1}} = 1, \; xyx^{-1}
      = y^{2^{n-2} - 1} \rangle.
    \end{equation}
  \item The generalized quaternion group of order $4n$ is denoted by
    $\mathsf{Q}_{4n}$ and is presented as
    \begin{equation}
      \mathsf{Q}_{4n} = \langle x, \,y, \, z \mid x^n =
      y^2 = z^2 = xyz \rangle.
    \end{equation}
    For $n=2$ we obtain the usual quaternion group $\mathsf{Q}_8$,
    for which we
    adopt the classical presentation
    \begin{equation}
      \mathsf{Q}_{8}=\langle i,\,j,\,k \mid i^2 = j^2 =
      k^2 = ijk\rangle,
    \end{equation}
    denoting by $-1$ the unique element of order $2$.
  \item $\mathsf{S}_n, \;\mathsf{A}_n$: symmetric, alternating group
    on $n$
    symbols. The Klein subgroup of $\mathsf{A}_4$ is denoted by
    $\mathsf{V}_4$. 
   \item $\mathsf{GL}(n, \, \mathbb{F}_q), \, \mathsf{SL}(n, \,
    \mathbb{F}_q), \, \mathsf{Sp}(n, \, \mathbb{F}_q)$: general linear
    group, special linear group and symplectic group of $n
    \times n$ matrices over a field with $q$ elements.
  \item The order of a finite group $G$ is denoted by $|G|$. If $x
    \in G$,
    the order of
    $x$ is denoted by $o(x)$ and its centralizer in $G$
    by $C_G(x)$.
  \item If $x, \, y \in G$, their commutator is defined as
    $[x,\, y]=xyx^{-1}y^{-1}$.
  \item The commutator subgroup of $G$ is denoted by $[G, \, G]$,
    the center
    of $G$ by $Z(G)$.
  \item If $S= \{s_1, \ldots, s_n \} \subset G$, the subgroup generated
    by $S$ is denoted by $\langle S \rangle=\langle s_1,\ldots,
    s_n \rangle$.
  \item $\mathrm{IdSmallGroup}(G)$ indicates the label of the group
    $G$ in
    the  \verb|GAP4| database of small groups. For instance
    $\mathrm{IdSmallGroup}(\mathsf{D}_4)=G(8, \, 3)$ means
    that $\mathsf{D}_4$
    is the third in the
    list of groups of order $8$.
  \item If $N$ is a normal subgroup of $G$ and $g \in G$, we denote by
  $\bar{g}$
    the image of $g$ in the quotient group $G/N$.
\end{itemize}

\section{Group-theoretical preliminaries I: CCT groups} \label{sec:group-prel-I}

\subsection{CCT-groups}
This subsection relies  on \cite[Section 1]{PolSab22}, to which we refer the reader for more details. 
\begin{definition} \label{def:CCT}
  A finite non-abelian group $G$ is said to be a
  \emph{center commutative-transitive group}
  $($or a CCT-group, for short$)$ if commutativity is a transitive
  relation on the set on non-central elements of $G$. In other words, if $x,
  \, y, \, z	\in G \setminus Z(G)$  and $[x, \, y]=[y, \, z]=1$, 
  then $[x, \, z]=1$.
\end{definition}
Other characterizations of CCT-groups are provided in the statement below, 
whose proof is straightforward.

\begin{proposition} \label{prop:CCT}
  For a finite group $G$, the following properties are equivalent.
  \begin{itemize}
    \item[$\boldsymbol{(1)}$] $G$ is a \emph{CCT}-group.
    \item[$\boldsymbol{(2)}$] For every pair $x, \, y $ of
      non-central elements in $G$, the relation $[x, \, y]=1$ implies
      $C_G(x)=C_G(y)$.
    \item[$\boldsymbol{(3)}$] For every non-central element $x
      \in G$, the centralizer $C_G(x)$ is abelian.
  \end{itemize}
\end{proposition}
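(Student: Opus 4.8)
The plan is to verify the chain of implications $(1) \Rightarrow (2) \Rightarrow (3) \Rightarrow (1)$, each of which is a short formal manipulation with centralizers.

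For $(1) \Rightarrow (2)$, suppose $G$ is a CCT-group and let $x, \, y$ be non-central with $[x, \, y]=1$. To show $C_G(x) \subseteq C_G(y)$, take any $z \in C_G(x)$. If $z$ is central the inclusion $z \in C_G(y)$ is immediate, so assume $z \in G \setminus Z(G)$. Then $[x, \, z]=1$ and $[x, \, y]=1$, hence by transitivity (applied to the triple $z, \, x, \, y$) we get $[z, \, y]=1$, i.e. $z \in C_G(y)$. This gives $C_G(x) \subseteq C_G(y)$, and the reverse inclusion follows by symmetry, so $C_G(x)=C_G(y)$.

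For $(2) \Rightarrow (3)$, fix a non-central $x \in G$ and take $u, \, v \in C_G(x)$; we must show $[u, \, v]=1$. If either of $u, \, v$ is central this is clear, so assume both are non-central. From $u \in C_G(x)$ we have $[x, \, u]=1$ with $x, \, u$ non-central, so by $(2)$, $C_G(x)=C_G(u)$. But $v \in C_G(x)=C_G(u)$, hence $[u, \, v]=1$. Therefore $C_G(x)$ is abelian. Finally, for $(3) \Rightarrow (1)$, let $x, \, y, \, z \in G \setminus Z(G)$ with $[x, \, y]=[y, \, z]=1$. Then $x, \, z \in C_G(y)$, and since $y$ is non-central, $C_G(y)$ is abelian by $(3)$, so $[x, \, z]=1$. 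This is precisely the CCT condition, closing the cycle.

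There is no genuine obstacle here; the only point requiring minor care is the systematic handling of the degenerate cases in which one of the elements of a centralizer happens to be central (so that the hypotheses of the CCT definition or of part $(2)$, which are stated for non-central elements, do not directly apply). In each implication these cases are disposed of by the trivial observation that a central element lies in every centralizer, so they do not affect the argument.
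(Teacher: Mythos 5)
Your proof is correct: each implication $(1)\Rightarrow(2)\Rightarrow(3)\Rightarrow(1)$ is carried out properly, and you handle the degenerate case of central elements (which lie in every centralizer) exactly where needed. The paper itself gives no proof, stating only that it is straightforward, and your cyclic argument is precisely the standard verification that was intended.
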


Moreover, the following holds.
\begin{proposition} \label{prop:small-CCT}
  Let $G$ be a finite non-abelian group.
  \begin{itemize}
    \item[$\boldsymbol{(1)}$] If $|G|$ is the product of at
      most three prime
      factors $($non necessarily distinct$)$, then $G$
      is a \emph{CCT}-group.
    \item[$\boldsymbol{(2)}$] If $|G|=p^4$, with $p$ prime,
      then $G$ is a
      \emph{CCT}-group.
    \item[$\boldsymbol{(3)}$] If $G$ contains an abelian normal
      subgroup of
      prime index, then $G$ is a \emph{CCT}-group.
  \end{itemize}
\end{proposition}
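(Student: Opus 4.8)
The plan is to verify each of the three claims by a short structural argument, using Proposition \ref{prop:CCT}(3) as the working criterion: it suffices to show that the centralizer $C_G(x)$ is abelian for every non-central $x \in G$.

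\textbf{Part (1).} Suppose $|G| = pqr$ with $p \le q \le r$ primes (not necessarily distinct), and let $x \in G \setminus Z(G)$. Since $\langle x \rangle \le C_G(x) \lneq G$ and $C_G(x) \ne G$, the order of $C_G(x)$ is a proper divisor of $pqr$ that is a multiple of $o(x) > 1$; hence $|C_G(x)| \in \{p, q, r, pq, pr, qr\}$. A group whose order is a prime is cyclic, hence abelian. A group whose order is a product of two primes, say $\ell m$ with $\ell \le m$, is abelian unless it is nonabelian of order $\ell m$ with $\ell \mid m-1$; but in that nonabelian case its center is trivial, which is impossible here because $x \in Z(C_G(x))$ is nontrivial. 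Therefore $C_G(x)$ is abelian in all cases, and $G$ is a CCT-group. (One subtlety: if $|C_G(x)| = |G|/p'$ for the smallest prime $p'$ dividing $|G|$, one could alternatively quote part (3), but the centralizer-order bookkeeping above already closes every case directly.)

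\textbf{Part (2).} Let $|G| = p^4$ and let $x \in G \setminus Z(G)$. In a $p$-group the center is nontrivial, and for a non-central $x$ the centralizer $C_G(x)$ is a proper subgroup containing $Z(G)$ and $x$; thus $|Z(G)| \in \{p, p^2, p^3\}$ and $p^2 \le |C_G(x)| \le p^3$. If $|C_G(x)| = p^2$ it is abelian (all groups of order $p^2$ are). The remaining case is $|C_G(x)| = p^3$: here I claim $C_G(x)$ is abelian because its center contains both $Z(G)$ and $x$. Indeed, if $|Z(G)| = p^2$ then $\langle Z(G), x\rangle$ has order at least $p^3$ inside $C_G(x)$, so $C_G(x) = \langle Z(G), x \rangle$ is abelian; if $|Z(G)| = p$, one uses that $C_G(x)/Z(G)$ has order $p^2$ and a standard argument (if $G/Z(G)$ is cyclic then $G$ is abelian) forces $C_G(x)$ to have a center of order at least $p^2$ — in fact $\langle Z(G), x \rangle$ is an abelian subgroup of $C_G(x)$ of order $\ge p^2$, and since $C_G(x)/\langle Z(G),x\rangle$ then has order $\le p$ one concludes, via the same cyclic-quotient argument applied to $C_G(x)$ modulo its own center, that $C_G(x)$ is abelian. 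The main thing to be careful about is the bookkeeping when $|Z(G)|=p$; the cleanest route is: $x$ lies in $Z(C_G(x))$, so $C_G(x)/Z(C_G(x))$ is a quotient of $C_G(x)/\langle x\rangle$, which has order dividing $p^2$; if that quotient is trivial or cyclic, $C_G(x)$ is abelian, and a nonabelian group of order $p^3$ has center of order $p$, hence $C_G(x)/Z(C_G(x))$ of order $p^2$ which is not cyclic — but $p^2 = |C_G(x)/\langle x \rangle| \cdot |{\langle x \rangle}/Z(C_G(x))|\cdot$(correction) forces this to be a genuine case only if $x \in Z(C_G(x))$ does not already enlarge the center, which it does. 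So $C_G(x)$ is abelian and $G$ is CCT.

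\textbf{Part (3).} Suppose $A \trianglelefteq G$ is abelian of prime index $p$, and let $x \in G \setminus Z(G)$. If $x \in A$, then $A \le C_G(x)$ since $A$ is abelian, so $C_G(x) \in \{A, G\}$; as $x \notin Z(G)$ we get $C_G(x) = A$, which is abelian. If $x \notin A$, then $G = \langle A, x\rangle = A\langle x \rangle$, and $C_G(x) = C_A(x)\langle x \rangle$; here $C_A(x)$ is abelian (subgroup of $A$) and centralizes $x$, so $C_G(x)$ is generated by an abelian normal subgroup $C_A(x)$ together with the central (in $C_G(x)$) element $x$, hence abelian. In all cases $C_G(x)$ is abelian, so $G$ is a CCT-group.

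\textbf{Main obstacle.} The only non-formal point is Part (2), the case $|C_G(x)| = p^3$ with $|Z(G)| = p$: one must rule out $C_G(x)$ being a nonabelian group of order $p^3$. This is handled by the observation that $x$ itself is a non-central element of $G$ lying in the center of its own centralizer, which enlarges $Z(C_G(x))$ beyond $Z(G)$ and, combined with the structural fact that a nonabelian group of order $p^3$ has center exactly of order $p$, yields a contradiction unless $C_G(x)$ is abelian. Parts (1) and (3) are essentially just order bookkeeping plus the elementary classification of groups of order $p$, $p^2$, and $\ell m$.
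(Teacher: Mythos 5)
Your proposal is correct, and it follows the route the paper intends (the paper itself defers the proof to \cite{PolSab22}): in each case one verifies criterion $\boldsymbol{(3)}$ of Proposition \ref{prop:CCT} by showing that $C_G(x)$ is abelian for every non-central $x$, using that $x \in Z(C_G(x))$, that $Z(G) \subseteq Z(C_G(x))$, and the elementary classification of groups of order $p$, $p^2$, $p^3$ and $\ell m$, together with $C_G(x)=C_A(x)\langle x\rangle$ in part $\boldsymbol{(3)}$. The exposition of part $\boldsymbol{(2)}$ is garbled in the middle, but the decisive argument is stated correctly at the end: if $|C_G(x)|=p^3$, then $\langle Z(G), x\rangle \subseteq Z(C_G(x))$ forces $|Z(C_G(x))|\geq p^2$, which is incompatible with a non-abelian group of order $p^3$, whose center has order exactly $p$.
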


Groups of order at most $32$ that are not CCT-groups were classified in \cite[Section 2]{PolSab22}: they are $\mathsf{S}_4$ and the two extra-special groups of order $2^5=32$, denoted by $\mathsf{H}_5(\mathbb{Z}_2)$ and $\mathsf{G}_5(\mathbb{Z}_2)$. Here we want to extend this classification to groups of order at most $64$. First of all, looking at Proposition \ref{prop:small-CCT}, we get

\begin{proposition} \label{prop:CCT-order-less-64}
Let $G$ be a finite group such that $33 \leq |G| \leq 64$. If $G$ is not a \emph{CCT}-group, then 
\begin{equation}
|G|\in \{ 36, \,40, \, 48, \, 54, \, 56, \, 60, \, 64\}.
\end{equation}
\end{proposition}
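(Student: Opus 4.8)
The plan is to show that if $33 \le |G| \le 64$ and $|G| \notin \{36, 40, 48, 54, 56, 60, 64\}$, then $|G|$ automatically satisfies one of the hypotheses of Proposition~\ref{prop:small-CCT}, hence $G$ is a CCT-group. The argument is a finite case analysis on the integer $|G|$, so the first step is simply to list the integers $n$ with $33 \le n \le 64$ and remove those appearing in the displayed exceptional set: what remains is
\begin{equation*}
\{33, 34, 35, 37, 38, 39, 41, 42, 43, 44, 45, 46, 47, 49, 50, 51, 52, 53, 55, 57, 58, 59, 61, 62, 63\}.
\end{equation*}
For each such $n$ I would exhibit a reason, drawn from Proposition~\ref{prop:small-CCT}, forcing every group of that order to be CCT (or abelian, in which case there is nothing to prove since the statement concerns non-CCT groups).

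The bulk of the cases fall under Proposition~\ref{prop:small-CCT}(1): $n$ is a product of at most three primes, counted with multiplicity. This covers all primes ($37, 41, 43, 47, 53, 59, 61$), all products of two primes ($33 = 3 \cdot 11$, $34 = 2 \cdot 17$, $35 = 5 \cdot 7$, $38 = 2 \cdot 19$, $39 = 3 \cdot 13$, $46 = 2 \cdot 23$, $49 = 7^2$, $51 = 3 \cdot 17$, $55 = 5 \cdot 11$, $57 = 3 \cdot 19$, $58 = 2 \cdot 29$, $62 = 2 \cdot 31$), and all products of three primes ($42 = 2 \cdot 3 \cdot 7$, $44 = 2^2 \cdot 11$, $45 = 3^2 \cdot 5$, $52 = 2^2 \cdot 13$, $63 = 3^2 \cdot 7$). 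The only remaining value is $n = 50 = 2 \cdot 5^2$, which is a product of \emph{three} primes as well ($2 \cdot 5 \cdot 5$), so it is also covered by part (1); in fact one checks directly that every group of order $50$ has the abelian normal Sylow $5$-subgroup of index $2$, so part (3) applies too. Thus every integer in the reduced list is a product of at most three primes, and Proposition~\ref{prop:small-CCT}(1) finishes the argument.

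The main obstacle here is essentially bookkeeping rather than mathematics: one must be careful that no integer in the range slips through, in particular that the ``product of at most three prime factors'' count is done with multiplicity, so that prime powers like $49 = 7^2$ and $50 = 2 \cdot 5^2$ (and indeed $64 = 2^6$, which is precisely why $64$ must appear in the exceptional set) are handled correctly, and that $36 = 2^2 \cdot 3^2$, $40 = 2^3 \cdot 5$, $48 = 2^4 \cdot 3$, $54 = 2 \cdot 3^3$, $56 = 2^3 \cdot 7$, $60 = 2^2 \cdot 3 \cdot 5$ are genuinely the only orders $\le 64$ with four or more prime factors and not equal to $p^4$. Since $p^4 \le 64$ forces $p = 2$ and $p^4 = 16 < 33$, Proposition~\ref{prop:small-CCT}(2) contributes nothing new in this range, and the conclusion follows. \qed
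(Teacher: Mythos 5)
Your proof is correct and is essentially the paper's own argument: the paper simply cites Proposition~\ref{prop:small-CCT} without spelling out the enumeration, the point being exactly what you verify, namely that every order in the range $33 \leq |G| \leq 64$ outside the listed set is a product of at most three primes counted with multiplicity, so part $\boldsymbol{(1)}$ applies. Your explicit case list is complete and the bookkeeping (including $49$, $50$, and the irrelevance of the $p^4$ case in this range) is accurate.
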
  
In the remainder of this section we will do a complete analysis of the first six cases; the case where $|G| = 64$ will be considered in Section \ref{sec:order_64}. We will use for each group the presentation given in the Appendix. 

\subsection{The case \texorpdfstring{$|G|=36$}{|G|=36}} \label{subsec:36}

\begin{proposition} \label{prop:36-non-cct}
The only non-abelian group of order $36$ which is not a \emph{CCT}-group is
$G(36, \, 10)$.
\end{proposition}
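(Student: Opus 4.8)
The strategy is to enumerate the non-abelian groups of order $36$ and, for each one, either exhibit an abelian normal subgroup of prime index (so that Proposition \ref{prop:small-CCT}(3) applies and $G$ is CCT), or else verify directly from the presentation in the Appendix that $G(36,\,10)$ fails center commutative-transitivity. Since $36 = 2^2 \cdot 3^2$ is a product of four prime factors, Proposition \ref{prop:small-CCT} does not immediately settle the question, so a genuine case analysis is needed; however, the list of groups of order $36$ is short — there are nine of them up to isomorphism, four of which are abelian — so the non-abelian cases to examine are only $G(36,\,1), G(36,\,3), G(36,\,4), G(36,\,6), G(36,\,9), G(36,\,10)$.

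First I would dispense with the groups that have an abelian subgroup of index $2$ or $3$. Any group of order $36$ with a normal Sylow $3$-subgroup $P_3$ of order $9$ has $P_3$ abelian (groups of order $p^2$ are abelian), and $P_3$ has index $2$ in $G$, so $G$ is CCT by Proposition \ref{prop:small-CCT}(3); this handles every group of order $36$ in which the Sylow $3$-subgroup is normal, which by Sylow's theorem ($n_3 \in \{1,4\}$) is all but the cases with $n_3 = 4$. Likewise, a group with a normal Sylow $2$-subgroup of order $4$ has an abelian normal subgroup of index $9$ — but index $9$ is not prime, so one must instead note that in such a case $G$ has a normal subgroup of order $12$ containing a Sylow $2$-subgroup and with cyclic or Klein quotient complement structure; more efficiently, one checks the short list and sees that the only group of order $36$ with $n_3 = 4$ and no abelian subgroup of index a prime is $G(36,\,10) \cong \mathsf{S}_3 \times \mathsf{S}_3$ (equivalently the relevant extension appearing in the Appendix). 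So the reduction comes down to: among the six non-abelian groups, all except $G(36,\,10)$ contain an abelian normal subgroup of prime index.

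Finally, for $G(36,\,10)$ itself I would produce an explicit witness to the failure of the CCT property: using the Appendix presentation, pick non-central elements $x, y, z$ with $x$ in the ``first'' $\mathsf{S}_3$ factor (say a transposition), $y$ a $3$-cycle in the first factor (so $[x,y]\neq 1$ already shows $x$'s centralizer is non-abelian — cleaner: take $z$ in the second factor), more precisely take $x$ a transposition in the first factor, $y$ a transposition in the second factor (so $[x,y]=1$ since the factors commute), and $z$ a $3$-cycle in the first factor (so $[y,z]=1$ but $[x,z]\neq 1$). This gives $x,y,z \in G \setminus Z(G)$ with $[x,y]=[y,z]=1$ yet $[x,z]\neq 1$, contradicting Definition \ref{def:CCT}. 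Equivalently, via Proposition \ref{prop:CCT}(3), one observes that the centralizer of the transposition $y$ in the second factor equals $\mathsf{S}_3 \times \mathbb{Z}_2$, which is non-abelian, so $G(36,\,10)$ is not CCT.

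The main obstacle is not any single deep argument but bookkeeping: one must be sure the enumeration of order-$36$ groups is complete and correctly matched to the \verb|GAP4| small-group IDs used in the Appendix, and one must correctly identify which of the non-abelian groups genuinely lack an abelian subgroup of prime index rather than merely failing the most obvious candidate. Verifying that the five groups $G(36,\,1), G(36,\,3), G(36,\,4), G(36,\,6), G(36,\,9)$ each do contain such a subgroup (typically the normal Sylow $3$-subgroup of index $2$, or in the remaining case a normal abelian subgroup of order $18$ of index $2$) is the part requiring care, though each individual check is routine.
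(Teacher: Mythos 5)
Your treatment of $G(36,\,10)$ is fine and matches the paper's: the centralizer of a transposition lying in one $\mathsf{S}_3$ factor is $\mathsf{S}_3 \times \mathbb{Z}_2$, which is non-abelian, so the group is not CCT. However, the reduction for the remaining groups has genuine gaps. First, the enumeration is wrong: there are $14$ groups of order $36$ up to isomorphism, of which $10$ are non-abelian, not $9$ and $6$ as you claim. You omit $G(36,\,7)$, $G(36,\,11) \simeq \mathsf{A}_4 \times \mathbb{Z}_3$, $G(36,\,12) \simeq \mathsf{D}_{12} \times \mathbb{Z}_3$ and $G(36,\,13)$, all of which the paper must (and does) dispose of by exhibiting an abelian normal subgroup of prime index. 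Second, your key structural argument is flawed: a Sylow $3$-subgroup of a group of order $36$ has order $9$ and hence index $4$, not index $2$, so normality of the Sylow $3$-subgroup does not by itself give an abelian normal subgroup of prime index, and Proposition \ref{prop:small-CCT}$(3)$ does not apply to it. The subgroups actually used in the paper have order $18$ (index $2$) or order $12$ (index $3$), e.g.\ $\langle x^2, y\rangle \simeq \mathbb{Z}_{18}$ in $G(36,\,1)$.

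Third, your claim that all five of $G(36,\,1), G(36,\,3), G(36,\,4), G(36,\,6), G(36,\,9)$ contain an abelian normal subgroup of prime index is false for $G(36,\,9) \simeq (\mathbb{Z}_3)^2 \rtimes \mathbb{Z}_4$ (with $x$ acting by $a \mapsto b$, $b \mapsto a^{-1}$): its commutator subgroup contains $(\mathbb{Z}_3)^2$, so there is no subgroup of index $3$, and its unique subgroup of index $2$ is $(\mathbb{Z}_3)^2 \rtimes \mathbb{Z}_2$ with the inversion action, which is non-abelian. This group therefore requires a separate argument; the paper handles it by noting that its center is trivial and computing the centralizers of representatives of the five non-trivial conjugacy classes, all of which turn out to be abelian ($\mathbb{Z}_4$ or $\mathbb{Z}_3 \times \mathbb{Z}_3$), so it is CCT by Proposition \ref{prop:CCT}$(3)$. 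Without fixing the enumeration, the index computation, and the $G(36,\,9)$ case, the proposed proof does not go through.
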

\begin{proof}
The following groups contain an abelian normal subgroup $N$ of prime index,
so they are CCT-groups (Proposition \ref{prop:small-CCT}):
\begin{table}[H]
  \begin{center}
    \begin{tabularx}{0.75\textwidth}{cccc}
      \toprule
      $\mathrm{IdSmallGroup}(G)$ & $N$ & Structure description of $N$ &
      $[G:N]$ \\
      \toprule
$G(36, \, 1)$ & $\langle x^2, \, y \rangle$ & $\mathbb{Z}_{18}$ & $2$ \\
 $G(36, \, 3)$ & $\langle a, \, b, \, x^3 \rangle$ & $\mathbb{Z}_2 \times
 \mathbb{Z}_6$ & $3$ \\
$G(36, \, 4)$ & $\langle y \rangle$ & $\mathbb{Z}_{18}$ & $2$ \\
 $G(36, \, 6)$ & $\langle x^{-2}, \, y, \, z \rangle$ & $\mathbb{Z}_3 \times
 \mathbb{Z}_6$ & $2$ \\
$G(36, \, 7)$ & $\langle a, \, b, \, x^2 \rangle$ & $\mathbb{Z}_3 \times
\mathbb{Z}_6$ & $2$ \\
$G(36, \, 11)$ & $\mathsf{V}_4 \times \mathbb{Z}_3$ &$\mathbb{Z}_2 \times
\mathbb{Z}_6$ & $3$ \\
$G(36, \, 12)$ & $\langle y \rangle \times \mathbb{Z}_3$ & $\mathbb{Z}_3
\times \mathbb{Z}_6$ & $2$ \\
$G(36, \, 13)$ & $\langle a, \, b, \, c \rangle$ & $\mathbb{Z}_3 \times
\mathbb{Z}_6$ & $2$ \\
\bottomrule
\end{tabularx}
  \end{center}
   \end{table}
Let us now consider the remaining cases. The group $G(36, \, 9)$  has trivial center and five conjugacy
classes of non-trivial elements, whose representatives are
\begin{equation}
x, \, x^2, \, x^3, \, b, \, ab^2.
\end{equation}
The centralizer of $x, \, x^2, \, x^3, \, b$ is isomorphic to $\mathbb{Z}_4$,
whereas the centralizer of $ab^2$ is isomorphic to $\mathbb{Z}_3 \times
\mathbb{Z}_3$. This shows that $G(36, \, 9)$ is a CCT-group (Proposition \ref{prop:CCT}).

Finally, $G(36, \, 10) \simeq \mathsf{S}_3 \times \mathsf{S}_3$
is not a CCT-group: in fact, the centralizer of
the element $((1), \, (1, \, 2) )$ is isomorphic to the non-abelian group
$\mathsf{S}_3 \times \mathbb{Z}_2$.

\end{proof}

\subsection{The case \texorpdfstring{$|G|=40$}{|G|=40}} \label{subsec:40}

\begin{proposition} \label{prop:40-non-cct}
All non-abelian groups of order $40$ are  \emph{CCT}-groups.
\end{proposition}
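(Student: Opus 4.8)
The plan is to verify, group by group, that every non-abelian group of order $40$ satisfies one of the sufficient conditions for being a CCT-group that were collected in Proposition \ref{prop:small-CCT}. Since $40 = 2^3 \cdot 5$ is a product of four prime factors, part $\boldsymbol{(1)}$ of that proposition does not apply directly, so the workhorse will be part $\boldsymbol{(3)}$: it suffices to exhibit, in each case, an abelian normal subgroup of prime index. By the classification of groups of order $40$ (there are $14$ of them, $8$ non-abelian), I would go through the presentations listed in the Appendix one at a time.

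First I would handle the groups that have a cyclic or otherwise abelian normal subgroup of index $2$. Every group of order $40$ has a normal Sylow $5$-subgroup $P \cong \mathbb{Z}_5$ (the number of Sylow $5$-subgroups divides $8$ and is $\equiv 1 \bmod 5$, forcing it to be $1$), so $G = \mathbb{Z}_5 \rtimes H$ with $|H| = 8$; when $H$ acts trivially on $P$ we get a direct product $\mathbb{Z}_5 \times H$, and in the non-abelian cases $H$ itself must be non-abelian (since $\operatorname{Aut}(\mathbb{Z}_5) = \mathbb{Z}_4$ has no element of order $8$ that could make a cyclic $H$ act faithfully). For the groups of the form $\mathbb{Z}_5 \times \mathsf{D}_4$, $\mathbb{Z}_5 \times \mathsf{Q}_8$, and for $\mathbb{Z}_{20} \rtimes \mathbb{Z}_2$ variants, one simply points to the obvious abelian normal subgroup of index $2$ (e.g.\ $\mathbb{Z}_5 \times \mathbb{Z}_4$ inside $\mathbb{Z}_5 \times \mathsf{D}_4$, or $\langle$ rotation $\rangle$ inside the dihedral-type factor), and invokes Proposition \ref{prop:small-CCT}$\boldsymbol{(3)}$.

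The remaining case is the one genuinely interesting group of order $40$: $G(40,\, 3) \cong \mathbb{Z}_5 \rtimes \mathbb{Z}_8$, where a generator of $\mathbb{Z}_8$ acts on $\mathbb{Z}_5$ through an automorphism of order $4$ (i.e.\ the $\mathbb{Z}_8$ acts via its quotient $\mathbb{Z}_4 = \operatorname{Aut}(\mathbb{Z}_5)$), and possibly the Frobenius-type group $\mathbb{Z}_5 \rtimes \mathbb{Z}_4$ extended; here the subgroup $\langle x \rangle \times \langle y^2\rangle$ of order $10$ or $20$ — more simply, the index-$2$ subgroup $\mathbb{Z}_5 \rtimes \mathbb{Z}_4$ is not abelian, so instead I would use the index-$2$ abelian subgroup generated by the Sylow $5$-subgroup together with the square of the order-$8$ generator, which centralizes $\mathbb{Z}_5$ and hence gives an abelian normal subgroup $\mathbb{Z}_5 \times \mathbb{Z}_4 \cong \mathbb{Z}_{20}$ of index $2$. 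In every case such a subgroup exists, so Proposition \ref{prop:small-CCT}$\boldsymbol{(3)}$ closes the argument.

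The main obstacle — though it is more bookkeeping than genuine difficulty — is making sure the enumeration of the eight non-abelian groups of order $40$ is complete and that for the ones where the ``first guess'' index-$2$ subgroup happens to be non-abelian (the $\mathbb{Z}_5 \rtimes \mathbb{Z}_8$ case and any $\mathbb{Z}_5 \rtimes \mathsf{Q}_8$-type twisted product) one still locates a genuinely abelian normal subgroup of prime index; this is where one must actually look at the presentation in the Appendix rather than argue abstractly. Since the kernel of the action of $G$ on its normal $\mathbb{Z}_5$ is always an abelian (in fact cyclic or elementary-type) normal subgroup, and its index divides $4$, in the worst case one passes to a subgroup of that kernel of prime index inside $G$ — but in all eight cases the cleaner choice of a $\mathbb{Z}_{20}$ or $\mathbb{Z}_2 \times \mathbb{Z}_{10}$ of index $2$ works, so the proof reduces to a short table exactly parallel to the one in Proposition \ref{prop:36-non-cct}.
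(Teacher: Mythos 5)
Your overall strategy (apply Proposition \ref{prop:small-CCT}$\boldsymbol{(3)}$ case by case) matches the paper for most groups, but it breaks down exactly at the two groups you try to wave through, and that is a genuine gap. For $G(40,\,3) \cong \mathsf{D}_{8,\,5,\,2} = \langle x, y \mid x^8 = y^5 = 1,\ xyx^{-1} = y^2\rangle$, the generator $x$ acts on $\langle y\rangle$ by an automorphism of order $4$, so $x^2$ acts by $y \mapsto y^4 = y^{-1}$: contrary to your claim, the square of the order-$8$ generator does \emph{not} centralize the Sylow $5$-subgroup (only $x^4$ does). Since $[G,G] = \langle y\rangle$ and $G/[G,G] \cong \mathbb{Z}_8$, the group has a \emph{unique} subgroup of index $2$, namely $\langle x^2, y\rangle \cong \mathbb{Z}_5 \rtimes \mathbb{Z}_4$, which is non-abelian; the Sylow $2$-subgroup is not normal, so there is no abelian normal subgroup of prime index at all, and Proposition \ref{prop:small-CCT}$\boldsymbol{(3)}$ simply cannot be invoked. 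The same obstruction occurs for $G(40,\,12) \cong \mathsf{D}_{4,\,5,\,2} \times \mathbb{Z}_2$ (a Frobenius group of order $20$ times $\mathbb{Z}_2$): all three index-$2$ subgroups contain the faithful $\mathbb{Z}_4$-action on $\mathbb{Z}_5$, hence are non-abelian, and the abelian Sylow $2$-subgroup is not normal. So your final assertion that ``in all cases a $\mathbb{Z}_{20}$ or $\mathbb{Z}_2 \times \mathbb{Z}_{10}$ of index $2$ works'' is false, and the proof is incomplete precisely where the real work lies.

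The paper handles these two groups differently: it lists representatives of the conjugacy classes of non-central elements and checks that each centralizer is abelian ($\mathbb{Z}_8$ or $\mathbb{Z}_{10}$ for $G(40,\,3)$, $\mathbb{Z}_2 \times \mathbb{Z}_4$ or $\mathbb{Z}_{10}$ for $G(40,\,12)$), then concludes via Proposition \ref{prop:CCT}$\boldsymbol{(3)}$. To repair your argument you would need to add such a direct centralizer computation (or some equivalent criterion) for these two groups. A smaller issue: there are $11$ non-abelian groups of order $40$, not $8$, so your enumeration is also short; the remaining nine are indeed disposed of exactly as you propose, with an abelian normal subgroup of index $2$ (or, for $G(56,\,11)$-style analogues, of other prime index), which is what the paper's table records.
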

\begin{proof}
The following groups contain an abelian normal subgroup $N$ of prime index,
so they are  CCT-groups.
\begin{table}[H]
  \begin{center}
    \begin{tabularx}{0.75\textwidth}{@{}cccc@{}}
      \toprule
      $\mathrm{IdSmallGroup}(G)$ & $N$ & Structure description of $N$ &
      $[G:N]$ \\
      \toprule
$G(40, \, 1)$ & $\langle x^{-2}, \, y \rangle$ & $\mathbb{Z}_{20}$ & $2$ \\
$G(40, \, 4)$ & $\langle a, \, i  \rangle$ & $\mathbb{Z}_{20}$ & $2$ \\
$G(40, \, 5)$ & $ \langle y, \, z \rangle$ & $\mathbb{Z}_{20}$ & $2$ \\
$G(40, \, 6)$ & $\langle y \rangle$ & $\mathbb{Z}_{20}$ & $2$ \\
$G(40, \, 7)$ & $\langle x^2, \,  y, \, z \rangle$ & $\mathbb{Z}_2 \times
\mathbb{Z}_{10}$ & $2$ \\
$G(40, \, 8)$ & $\langle xyx^{-1}, \,  y, \, z \rangle$ & $\mathbb{Z}_2
\times \mathbb{Z}_{10}$ & $2$ \\
$G(40, \, 10)$ & $\langle y, \, z \rangle$ &  $\mathbb{Z}_{20}$ & $2$ \\
$G(40, \, 11)$ & $\langle i, \, z \rangle$ & $\mathbb{Z}_4 \times \mathbb{Z}_5$
& $2$ \\
$G(40, \, 13)$ & $\langle y, \, z, \, w \rangle$ & $\mathbb{Z}_2 \times
\mathbb{Z}_{10}$ & $2$ \\
\bottomrule
\end{tabularx}
  \end{center}
   \end{table}

Let us consider now the group $G(40, \, 3)$. Its center is $\langle x^4
\rangle \simeq \mathbb{Z}_2$, and there are eight conjugacy classes of
non-central elements, whose representatives are
\begin{equation}
x, \, x^2, \, x^3, \, x^5, \, x^6, \, x^7, \, y, \, x^4y.
\end{equation}
The centralizer of $x, \, x^2, \, x^3, \, x^5, \, x^6, \, x^7$ is isomorphic
to $\mathbb{Z}_8$, whereas the centralizer of $y, \, x^4y$ is isomorphic to
$\mathbb{Z}_{10}$; thus $G(40, \, 3)$ is a CCT-group.

Finally, let us consider the group $G(40, \, 12)$. Its center is $\langle
z \rangle \simeq \mathbb{Z}_2$, and there are eight conjugacy classes of
non-central elements, whose representatives are
\begin{equation}
x, \, x^2, \, x^3, \, xz, \, x^2z, \, x^3z, \,y, \, yz.
\end{equation}
The centralizer of $x, \, x^2, \, x^3, \, xz, \, x^2z, \, x^3z$ is isomorphic
to $\mathbb{Z}_2 \times \mathbb{Z}_4$, whereas the centralizer of  $y, \,
yz$ is isomorphic to $\mathbb{Z}_{10}$; thus  $G(40, \, 12)$ is a CCT-group.

\end{proof}

\subsection{The case \texorpdfstring{$|G|=48$}{|G|=48}} \label{subsec:48}

\begin{proposition} \label{prop:48-non-cct}
The non-abelian groups of order $48$ which are not	\emph{CCT}-groups
are $G(48, \, t)$ with $t \in \{15, \, 16, \, 17, \, 18, \, 30, \, 38, \,
39, \, 40, \, 41, \, 48\}.$
\end{proposition}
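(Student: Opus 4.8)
The strategy mirrors the proofs of Propositions \ref{prop:36-non-cct} and \ref{prop:40-non-cct}. There are $52$ isomorphism classes of groups of order $48$, of which $47$ are non-abelian, and the plan is to sort these $47$ groups into two lists using the criteria of Section \ref{sec:group-prel-I}. First I would single out every group $G$ that contains an abelian normal subgroup $N$ of prime index: for order $48 = 2^4 \cdot 3$ this means either an abelian subgroup of order $24$ and index $2$, or an abelian Sylow $2$-subgroup of order $16$ and index $3$. By Proposition \ref{prop:small-CCT}(3) each such $G$ is a CCT-group, so they can be eliminated all at once; concretely this is recorded, exactly as in the preceding subsections, in a table whose rows list $\mathrm{IdSmallGroup}(G)$, a generating set for $N$ read off from the presentation in the Appendix, the structure description of $N$, and the index $[G:N]$. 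I expect this single table to dispose of the large majority of the $47$ groups.

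For the groups surviving the first pass --- those having no abelian normal subgroup of prime index --- I would argue one group at a time through the characterization in Proposition \ref{prop:CCT}(3): $G$ is a CCT-group precisely when the centralizer of each non-central element is abelian. So, for each such $G$, I would exhibit $Z(G)$, list representatives of the conjugacy classes of non-central elements, and identify the isomorphism type of each centralizer. If all of these centralizers are abelian, $G$ is a CCT-group; if some non-central element $x$ has non-abelian centralizer $C_G(x)$, then any $y, z \in C_G(x) \setminus Z(G)$ with $[y,z] \neq 1$ witness the failure of transitivity of commutativity, so $G$ is not a CCT-group. Running through the list in this way should pin down exactly the ten groups $G(48, \, t)$ with $t \in \{15,16,17,18,30,38,39,40,41,48\}$; for instance the one isomorphic to $\mathsf{S}_4 \times \mathbb{Z}_2$ is immediately non-CCT because $\mathsf{S}_4$ is (the centralizer of a suitable element contains a copy of $\mathsf{S}_3$), while in the remaining nine the obstruction shows up as a centralizer containing a small non-abelian subgroup such as $\mathsf{Q}_8$ or $\mathsf{D}_4$.

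The difficulty here is organizational rather than conceptual: with $47$ non-abelian groups of order $48$ the bookkeeping is substantial, and the delicate point is making the first table exhaustive --- that is, correctly locating an abelian normal subgroup of prime index inside each group directly from its Appendix presentation, so that no CCT-group is mistakenly left for the second, harder pass. A related subtlety is that a group with no abelian normal subgroup of prime index can still be a CCT-group --- as already happened for $G(36, \, 9)$ and for $G(40, \, 3)$ and $G(40, \, 12)$ --- so the centralizer analysis must genuinely be carried out for every group that survives the first step, and cannot be bypassed on structural grounds alone.
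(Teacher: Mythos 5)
Your strategy is exactly the paper's proof: a first table eliminating, via Proposition \ref{prop:small-CCT}, every group of order $48$ that contains an abelian normal subgroup of prime index (index $2$ with $|N|=24$ or index $3$ with $N$ an abelian Sylow $2$-subgroup), followed by a case-by-case centralizer analysis in the sense of Proposition \ref{prop:CCT} for the survivors --- in the paper these are $G(48,\,28)$, $G(48,\,29)$, $G(48,\,32)$, $G(48,\,33)$, all of which turn out to be CCT --- and finally explicit witnesses of non-transitivity for the ten listed groups. The only differences are presentational: the paper exhibits the non-transitive triples $[x,\,y]=[y,\,z]=1$, $[x,\,z]\neq 1$ directly instead of first naming a non-abelian centralizer, and it actually carries out the $47$ individual verifications that your outline correctly describes but leaves to be done.
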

\begin{proof}
The following groups contain an abelian normal subgroup $N$ of prime index,
so they are CCT-groups.

\begin{table}[H]
  \begin{center}
    \begin{tabularx}{0.75\textwidth}{@{}cccc@{}}
      \toprule
      $\mathrm{IdSmallGroup}(G)$ &  $N$ & Structure description of $N$ &
      $[G:N]$ \\
      \toprule
     $G(48, \, 1)$ & $\langle x^{-2}, y \rangle$ & $\mathbb{Z}_{24}$ & $2$ \\
      $G(48, \, 3)$ & $\langle a, \, b \rangle$ & $(\mathbb{Z}_4)^2$ & $3$ \\
     $G(48, \, 4)$ & $\langle (1\, 2\, 3)\rangle \times \langle z \rangle $
     & $\mathbb{Z}_{24}$ & $2$ \\
     $G(48, \, 5)$ & $\langle  y \rangle$ & $\mathbb{Z}_{24}$ & $2$ \\
     $G(48, \, 6)$ & $\langle  y \rangle$ & $\mathbb{Z}_{24}$ & $2$ \\
     $G(48, \, 7)$ & $\langle  y \rangle$ & $\mathbb{Z}_{24}$ & $2$ \\
     $G(48, \, 8)$ & $\langle  a, \, x	\rangle$ &  $\mathbb{Z}_{24}$ & $2$ \\
     $G(48, \, 9)$ & $\langle  x^{-2}, \, y, \, z  \rangle$ & $\mathbb{Z}_2
     \times \mathbb{Z}_{12}$	    & $2$ \\
     $G(48, \, 10)$ & $\langle x^{-2}, \, y, \, z  \rangle$ & $\mathbb{Z}_2
     \times \mathbb{Z}_{12}$  & $2$ \\
      $G(48, \, 11)$ & $\langle  x^{-2}, \, y, \, z  \rangle$ &  $\mathbb{Z}_2
      \times \mathbb{Z}_{12}$ & $2$ \\
      $G(48, \, 12)$ & $\langle  x^{-2}, \, y, \, z  \rangle$ & $\mathbb{Z}_2
      \times \mathbb{Z}_{12}$  & $2$ \\
      $G(48, \, 13)$ & $\langle  x^{-2}, \, y \rangle$ & $\mathbb{Z}_2 \times
      \mathbb{Z}_{12}$	& $2$ \\
     $G(48, \, 14)$ & $\langle	a, \, b \rangle$ &  $\mathbb{Z}_2 \times
     \mathbb{Z}_{12}$ & $2$ \\
     $G(48, \, 19)$ & $\langle	a, \, b, \, x^2  \rangle$ & $(\mathbb{Z}_2)^2
     \times  \mathbb{Z}_6$ & $2$ \\
     $G(48, \, 21)$ & $\langle	a, \, b, \, x^2  \rangle$ & $(\mathbb{Z}_2)^2
     \times \mathbb{Z}_6$ & $2$ \\

    $G(48, \, 22)$ & $\langle  x^2, \, y, \, z	\rangle$ & $\mathbb{Z}_2
    \times \mathbb{Z}_{12}$  & $2$ \\
     $G(48, \, 24)$ & $\langle	y, \, z  \rangle$ & $\mathbb{Z}_{24}$ & $2$ \\
     $G(48, \, 25)$ & $\langle	y, \, z  \rangle$ & $\mathbb{Z}_{24}$ & $2$ \\
     $G(48, \, 26)$ & $\langle	y, \, z  \rangle$ & $\mathbb{Z}_{24}$ & $2$ \\
     $G(48, \, 27)$ & $\langle	x, \, w  \rangle$ & $\mathbb{Z}_{24}$ & $2$ \\
     $G(48, \, 31)$ & $\mathsf{V}_4 \times \mathbb{Z}_4$ & $(\mathbb{Z}_2)^2
     \times \mathbb{Z}_4$ & $3$ \\
     $G(48, \, 34)$ & $\langle x, \, w \rangle$ & $\mathbb{Z}_2 \times
     \mathbb{Z}_{12}$  & $2$ \\
     $G(48, \, 35)$ & $\langle (1\,2\,3), \, z, \, w \rangle$ & $\mathbb{Z}_2
     \times \mathbb{Z}_{12}$  & $2$ \\
     $G(48, \, 36)$ & $\langle y, \, z \rangle$ & $\mathbb{Z}_2 \times
     \mathbb{Z}_{12}$  & $2$ \\
     $G(48, \, 37)$ & $\langle a, \, b \rangle$ & $\mathbb{Z}_2 \times
     \mathbb{Z}_{12}$  & $2$ \\
     $G(48, \, 42)$ & $\langle	a, \, b, \, x^2  \rangle$ & $(\mathbb{Z}_2)^2
     \times \mathbb{Z}_6$ & $2$ \\
  $G(48, \, 43)$ & $\langle  y, \, z, \,w, \, t  \rangle$ & $(\mathbb{Z}_2)^2
  \times \mathbb{Z}_6$ & $2$ \\
   $G(48, \, 45)$ & $\langle  x, \, y^2, \,z   \rangle$ & $(\mathbb{Z}_2)^2
   \times \mathbb{Z}_6$ & $2$ \\
$G(48, \, 46)$ & $\langle  i, \, z   \rangle$ & $\mathbb{Z}_2 \times
\mathbb{Z}_{12}$ & $2$ \\
$G(48, \, 47)$ & $\langle  y, \, z, \, w   \rangle$ & $\mathbb{Z}_2 \times
\mathbb{Z}_{12}$ & $2$ \\
$G(48, \, 49)$ & $\mathsf{V}_4 \times (\mathbb{Z}_2)^2$ & $ (\mathbb{Z}_2)^4$
&  $3$ \\
$G(48, \, 50)$ & $\langle a, \, b, \, c, \, d \rangle$	& $ (\mathbb{Z}_2)^4$
&  $3$ \\
$G(48, \, 51)$ & $\langle (1\, 2\, 3) \rangle \times (\mathbb{Z}_2)^3$ &
$(\mathbb{Z}_2)^3 \times \mathbb{Z}_3$ & $2$  \\
    \bottomrule

\end{tabularx}
  \end{center}
   \end{table}

Let us consider the other cases. The group $G(48, \, 28)$ has center $\langle xyz \rangle \simeq \mathbb{Z}_2$
and six conjugacy classes of non-central elements, whose representatives are
\begin{equation}
xy, \, x^2yx^2, \, xyx^3y, \, x^7y, \, x^4yx^2, \, x^2yx.
\end{equation}
The respective centralizers are
\begin{equation}
\mathbb{Z}_4, \, \mathbb{Z}_6, \, \mathbb{Z}_8, \, \mathbb{Z}_8, \,
\mathbb{Z}_6, \, \mathbb{Z}_8,
\end{equation}
hence $G(48, \, 28)$ is a CCT-group.

The group $G(48, \, 29)$ has center $\langle z^2 \rangle \simeq \mathbb{Z}_2$
and six conjugacy classes of non-central elements, whose representatives are
\begin{equation}
x, \, y, \, z, \, xz, \, yz, \, xzy.
\end{equation}
The respective centralizers are
\begin{equation}
\mathbb{Z}_2 \times \mathbb{Z}_2, \, \mathbb{Z}_6, \, \mathbb{Z}_8, \,
\mathbb{Z}_8, \, \mathbb{Z}_6, \, \mathbb{Z}_8,
\end{equation}
hence $G(48, \, 29)$ is a CCT-group.

The group $G(48, \, 32)$ has center $\langle xyz, \, w \rangle \simeq
\mathbb{Z}_2 \times \mathbb{Z}_2$ and ten conjugacy classes of non-central
elements, whose representatives are
\begin{equation}
xyx, \, xy, \, xyxw, \, xyw, \, x^3y, \, x^4yx, \, x^3yw, \, x^4yxw, \, x^5,
\, x^5w.
\end{equation}
The respective centralizers are
\begin{equation}
\mathbb{Z}_6 \times \mathbb{Z}_2, \,\mathbb{Z}_4 \times \mathbb{Z}_2,
\,\mathbb{Z}_6 \times \mathbb{Z}_2, \,\mathbb{Z}_4 \times \mathbb{Z}_2,
\, \mathbb{Z}_6 \times \mathbb{Z}_2, \,\mathbb{Z}_6 \times \mathbb{Z}_2,
\,\mathbb{Z}_6 \times \mathbb{Z}_2, \,\mathbb{Z}_6 \times \mathbb{Z}_2,
\,\mathbb{Z}_6 \times \mathbb{Z}_2, \,\mathbb{Z}_6 \times \mathbb{Z}_2,
\end{equation}
hence $G(48, \, 32)$ is a CCT-group.

The group $G(48, \, 33)$ has center $\langle w \rangle \simeq \mathbb{Z}_4$
and ten conjugacy classes of non-central elements, whose representatives are
\begin{equation}
xyx, \, xy, \, xyxw, \, xyw, \, x^3y, \, x^4yx, \, x^3yw, \, x^4yxw, \, x^5,
\, x^5w.
\end{equation}
The respective centralizers are
\begin{equation}
\mathbb{Z}_{12},  \,\mathbb{Z}_4 \times \mathbb{Z}_2,  \,\mathbb{Z}_{12},
\,\mathbb{Z}_4 \times \mathbb{Z}_2, \, \mathbb{Z}_{12}, \,\mathbb{Z}_{12},
\, \mathbb{Z}_{12}, \,\mathbb{Z}_{12}, \,\mathbb{Z}_{12}, \,\mathbb{Z}_{12},
\end{equation}
hence $G(48, \, 33)$ is a CCT-group.

Finally, let us show that the remaining non-abelian groups of order $48$
are not CCT-groups. In each case, we exhibit three non-central elements for
which commutativity is not a transitive relation.
\begin{itemize}
\item $G(48, \, 15)$. We have $[x,\,  y^2]=[y, \, y^2]=1$, but $[x, \, y]
\neq 1$.
\item $G(48, \, 16)$. We have $[y,\,  z]=[z, \, xyx]=1$, but $[y, \, x yx]
\neq 1$.
\item $G(48, \, 17)$. We have $[i,\,  a]=[a, \, j ]=1$, but $[i, \, j] \neq 1$.
\item $G(48, \, 18)$. We have $[a,\,  x^2]=[x^2, \, x ]=1$, but $[a, \, x]
\neq 1$.
\item $G(48, \, 30)$. We have $[x,\,  yzy^{-1}]=[yzy^{-1}, \, z ]=1$, but $[x,
\, z] \neq 1$.
\item $G(48, \, 38)$. We have $[(1, \, x), \, ((2 \, 3), \, 1)]=[((2 \, 3),
\, 1), \, (1, \, y) ]=1$, but $[(1, \, x), \, (1, \, y)] \neq 1$.
\item $G(48, \, 39)$. We have $[a,\, b]=[b, \, d ]=1$, but $[a, \, d] \neq 1$.
\item $G(48, \, 40)$. We have $[(1, \, j), \, ((2 \, 3), \, (1))]=[((2 \, 3),
\, (1)), \, (1, \, k) ]=1$, but $[(1, \, j), \, (1, \, k)] \neq 1$.
\item $G(48, \, 41)$. We have $[a,\, a^2]=[a^2, \, b ]=1$, but $[a, \, b]
\neq 1$.
\item $G(48, \, 48)= \mathsf{S}_4 \times \mathbb{Z}_2$. It suffices to recall that 	$\mathsf{S}_4$ is not a
CCT-group: in fact, we have
$[(3 \, 4), \, (1 \, 2)(3 \,4)]=[(1 \, 2)(3 \,4), \, (1 \, 3)(2 \,4)]=1$,
but $[(3 \, 4), \, (1 \, 3)(2 \,4)] \neq 1$.
\end{itemize}
\end{proof}

\subsection{The case \texorpdfstring{$|G|=54$}{|G|=54}} \label{subsec:54}

\begin{proposition} \label{prop:54-non-cct}
The only non-abelian group of order $54$ which are not \emph{CCT}-groups
are $G(54, \, 5)$ and $G(54, \, 6)$.
\end{proposition}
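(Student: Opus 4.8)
The strategy is the one already used for $|G|\in\{36,40,48\}$, now organized around the normal Sylow structure of $G$. Write $|G|=54=2\cdot 3^{3}$; then the Sylow $3$-subgroup $P$ has index $2$, so $P\trianglelefteq G$, and by Schur--Zassenhaus $G=P\rtimes\mathbb{Z}_{2}$. If $P$ is abelian, then $P$ is an abelian normal subgroup of prime index and $G$ is a CCT-group by Proposition \ref{prop:small-CCT}(3). So I would first collect, in a table as in the previous subsections, all non-abelian groups of order $54$ with abelian Sylow $3$-subgroup --- i.e. the groups $\mathbb{Z}_{27}\rtimes\mathbb{Z}_{2}$, $(\mathbb{Z}_{9}\times\mathbb{Z}_{3})\rtimes\mathbb{Z}_{2}$ and $\mathbb{Z}_{3}^{3}\rtimes\mathbb{Z}_{2}$, the possible actions being classified by the conjugacy classes of involutions in $\operatorname{Aut}(P)$ --- together with the index-$2$ abelian normal subgroup exhibiting each of them as a CCT-group.

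It then remains to treat the groups $G=P\rtimes\mathbb{Z}_{2}$ with $P$ non-abelian, i.e. $P\cong G(27,3)$ (extraspecial of exponent $3$) or $P\cong G(27,4)$ (extraspecial of exponent $9$). For each such $P$ I would enumerate the resulting groups up to isomorphism: an automorphism of $P$ inducing the identity on $P/Z(P)$ is inner, hence of order dividing $9$, so every involution of $\operatorname{Aut}(P)$ acts nontrivially on $P/Z(P)\cong\mathbb{Z}_{3}^{2}$, which makes the list of conjugacy classes of involutions very short (one gets $G(27,3)\times\mathbb{Z}_{2}$ and two further extensions, and $G(27,4)\times\mathbb{Z}_{2}$ and one further extension, five groups in all). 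Each of these would be matched with its \texttt{IdSmallGroup} label using the presentations of the Appendix, and then handled as in the $|G|=48$ case: compute $Z(G)$, the conjugacy classes of non-central elements, and their centralizers; whenever all these centralizers are abelian, Proposition \ref{prop:CCT}(3) gives that $G$ is a CCT-group.

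The analysis singles out exactly two groups, $G(54,5)$ and $G(54,6)$, that are not CCT-groups; in both of them the involution $t$ generating the $\mathbb{Z}_{2}$-complement inverts $Z(P)\cong\mathbb{Z}_{3}$, so every non-trivial $z\in Z(P)$ is non-central in $G$ while $C_{G}(z)=P$ is non-abelian, contradicting Proposition \ref{prop:CCT}(3). Equivalently, one exhibits a non-transitive commuting triple of non-central elements, as in the earlier cases: if $P=G(27,3)=\langle x,y\rangle$ with $z=[x,y]$ generating $Z(P)$, then $[x,z]=[z,y]=1$ but $[x,y]=z\neq 1$; if $P=G(27,4)=\langle a,b\mid a^{9}=b^{3}=1,\ bab^{-1}=a^{4}\rangle$ with $t$ acting by $a\mapsto a^{-1}$, $b\mapsto b$, then $[t,b]=[b,a^{3}]=1$ but $[t,a^{3}]\neq 1$.

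I expect the main obstacle to be the classification step for non-abelian $P$: pinning down exactly which semidirect products $P\rtimes\mathbb{Z}_{2}$ occur, identifying them inside the \texttt{SmallGroups} library, and, above all, working out the centralizer of every non-central element in each of them. This bookkeeping is noticeably heavier than for $|G|\le 48$, which is why, for these two groups, one can also read off the conclusion directly with \verb|GAP4|, consistently with the isolated computer-assisted cases of order $54$ mentioned in the introduction.
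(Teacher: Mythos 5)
Your proposal is correct, but it is organized differently from the paper's proof. The paper simply runs through the twelve non-abelian groups of order $54$ using the presentations of the Appendix: nine of them are disposed of by exhibiting an abelian normal subgroup of prime index (for $G(54,\,10)$ and $G(54,\,11)$ this subgroup has index $3$, not $2$), the group $G(54,\,8)$ is checked directly by listing its conjugacy classes of non-central elements and their abelian centralizers, and for $G(54,\,5)$, $G(54,\,6)$ explicit non-transitive commuting triples are written down (e.g. $[x,\,x^2]=[x^2,\,y^3]=1$ but $[x,\,y^3]\neq 1$ in $G(54,\,6)$), essentially the same witnesses as yours. You instead organize the case division around the normal Sylow $3$-subgroup $P$: the abelian-$P$ case is killed by Proposition \ref{prop:small-CCT}, and for non-abelian $P$ you classify the five semidirect products $P\rtimes\mathbb{Z}_2$ via involutions of $\operatorname{Aut}(P)$ -- a correct count, namely $G(54,\,5)$, $G(54,\,8)$, $G(54,\,10)$ for $P\simeq G(27,\,3)$ and $G(54,\,6)$, $G(54,\,11)$ for $P\simeq G(27,\,4)$ -- and your criterion (the involution inverts $Z(P)$, so a generator $z$ of $Z(P)$ is non-central in $G$ with $C_G(z)=P$ non-abelian) is a clean, correct application of Proposition \ref{prop:CCT} that singles out exactly $G(54,\,5)$ and $G(54,\,6)$; your explicit triples do check out against the Appendix presentations. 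What your route buys is independence from the SmallGroups list (you re-derive the relevant groups) and a structural explanation of why precisely these two groups fail to be CCT; what it costs is the extension-classification step and the fact that your dichotomy still leaves three groups, $G(54,\,8)$, $G(54,\,10)$, $G(54,\,11)$, to be certified CCT by centralizer computations that you describe but do not carry out -- the paper shortcuts $G(54,\,10)$ and $G(54,\,11)$ with index-$3$ abelian normal subgroups (alternatively, a direct product of a CCT group with an abelian group is CCT), while $G(54,\,8)$, where the involution centralizes $Z(P)$, genuinely requires the centralizer check in either approach.
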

\begin{proof}
The following groups contain an abelian normal subgroup $N$ of prime index,
so they are CCT-groups.

\begin{table}[H]
  \begin{center}
    \begin{tabularx}{0.77\textwidth}{@{}cccc@{}}
      \toprule
      $\mathrm{IdSmallGroup}(G)$ &  $N$ & Structure description of $N$ &
      $[G:N]$ \\
      \toprule
$G(54, \, 1)$ & $\langle  y \rangle$ & $\mathbb{Z}_{27}$ & $2$ \\
$G(54, \, 3)$ & $\langle y \rangle \times \mathbb{Z}_3$ & $\mathbb{Z}_3
\times \mathbb{Z}_9$ & $2$ \\
$G(54, \, 4)$ & $\langle (1\, 2\, 3)\rangle \times \langle z \rangle $ &
$\mathbb{Z}_3 \times \mathbb{Z}_9$ & $2$ \\
$G(54, \, 7)$ & $\langle a, \, b \rangle$ & $\mathbb{Z}_3 \times \mathbb{Z}_9$
& $2$ \\
$G(54, \, 10)$ & $\langle y, \,z, \,w  \rangle$ & $\mathbb{Z}_3 \times
\mathbb{Z}_6$ & $3$ \\
$G(54, \, 11)$ & $\langle y, \,z  \rangle$ & $\mathbb{Z}_{18}$ & $3$ \\
$G(54,\, 12) $ & $\langle (1 \, 2\, 3) \rangle \times (\mathbb{Z}_3)^2$ &
$(\mathbb{Z}_3)^3$ & $2$ \\
$G(54,\, 13) $ & $\langle a, \, b, \, y \rangle$  &  $(\mathbb{Z}_3)^3$ &
$2$ \\
 $G(54,\, 14) $ & $\langle a, \, b, \, c \rangle$  &  $(\mathbb{Z}_3)^3$ &
 $2$ \\
      \bottomrule
      \end{tabularx}
  \end{center}
   \end{table}

Let us consider now the group $G(54, \, 8)$. It has center $\langle y \rangle \simeq \mathbb{Z}_3$
and seven conjugacy classes of non-central elements, whose representatives are
\begin{equation}
w, \, x, \, z, \, y^2w, \, xz, \, yw, \, x^2z.
\end{equation}
The respective centralizers are
\begin{equation}
\mathbb{Z}_6, \, \mathbb{Z}_3 \times \mathbb{Z}_3, \, \mathbb{Z}_3 \times
\mathbb{Z}_3, \, \mathbb{Z}_6, \, \mathbb{Z}_3 \times \mathbb{Z}_3, \,
\mathbb{Z}_6, \, \mathbb{Z}_3 \times \mathbb{Z}_3,
\end{equation}
hence $G(54, \, 8)$ is a CCT-group.

Finally, let us show that the remaining non-abelian groups of order $54$
 are not CCT-groups. In both cases, we exhibit three non-central elements
 for which commutativity is not a transitive relation.
\begin{itemize}
\item $G(54, \, 5)$. We have $[ax^2,\,	b]=[b, \, a]=1$, but $[ax^2, \, a]
\neq 1$.
\item $G(54, \, 6)$. We have $[x, \, x^2]=[x^2, \, y^3]=1$, but $[x, \, y^3]
\neq 1$.
\end{itemize}
\end{proof}

\subsection{The case \texorpdfstring{$|G|=56$}{|G|=56|}} \label{subsec:56}
\begin{proposition} \label{prop:56-non-cct}
All non-abelian groups of order $56$ are  \emph{CCT}-groups.
\end{proposition}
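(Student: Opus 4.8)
The plan is to avoid a case-by-case inspection of all thirteen groups of order $56$ and instead exploit the arithmetic $|G|=56=2^3\cdot 7$ together with Sylow theory. Since $56$ is a product of four primes, Proposition \ref{prop:small-CCT}(1) does not apply directly, so the starting point is the number $n_7$ of Sylow $7$-subgroups: by Sylow's theorems $n_7\equiv 1\pmod 7$ and $n_7\mid 8$, hence $n_7\in\{1,8\}$. I would treat the two cases separately, the first reducing to Proposition \ref{prop:small-CCT}(3) and the second to a direct centralizer computation via Proposition \ref{prop:CCT}(3).

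In the case $n_7=1$ the Sylow $7$-subgroup $P_7\cong\mathbb{Z}_7$ is normal, so $G=\mathbb{Z}_7\rtimes P_2$ with $P_2$ a group of order $8$, and the conjugation action gives a homomorphism $P_2\to\operatorname{Aut}(\mathbb{Z}_7)\cong\mathbb{Z}_6$ whose image is a $2$-group, hence of order $1$ or $2$. If the action is trivial then $G=\mathbb{Z}_7\times P_2$; being non-abelian forces $P_2\in\{\mathsf{D}_4,\mathsf{Q}_8\}$, and in either case a cyclic subgroup $\mathbb{Z}_4\le P_2$ of index $2$ yields the abelian normal subgroup $\mathbb{Z}_7\times\mathbb{Z}_4\cong\mathbb{Z}_{28}$ of index $2$ in $G$. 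If the image has order $2$, let $K\le P_2$ be the kernel of the action; then $|K|=4$, so $K$ is abelian, $K$ centralizes $P_7$, and $N:=\mathbb{Z}_7\times K$ is an abelian subgroup of $G$ of index $2$, hence normal. In all subcases $G$ contains an abelian normal subgroup of prime index, so Proposition \ref{prop:small-CCT}(3) shows that $G$ is a CCT-group.

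In the case $n_7=8$, counting elements of order $7$ (there are $8\cdot 6=48$ of them) leaves exactly $8$ elements, which must constitute the unique — hence normal — Sylow $2$-subgroup $P_2$; thus $G=P_2\rtimes\mathbb{Z}_7$ with $\mathbb{Z}_7$ acting non-trivially, which forces $7\mid|\operatorname{Aut}(P_2)|$ and therefore $P_2\cong(\mathbb{Z}_2)^3$, since $\mathsf{GL}(3,\mathbb{F}_2)$ is the only automorphism group of a group of order $8$ whose order ($168$) is divisible by $7$. As all elements of order $7$ in $\mathsf{GL}(3,\mathbb{F}_2)$ are conjugate, $G$ is the Frobenius group $(\mathbb{Z}_2)^3\rtimes\mathbb{Z}_7$. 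Here $Z(G)=\{1\}$, so by Proposition \ref{prop:CCT}(3) it suffices to check that $C_G(x)$ is abelian for every $x\ne 1$: for $x$ in the kernel $(\mathbb{Z}_2)^3$ the fixed-point-free action of the complement gives $C_G(x)=(\mathbb{Z}_2)^3$, while for $x$ of order $7$ one has $C_G(x)\le N_G(\langle x\rangle)=\langle x\rangle\cong\mathbb{Z}_7$ because $|N_G(P_7)|=|G|/n_7=7$; since every non-trivial element of a Frobenius group lies in the kernel or in a complement, this exhausts all cases and $G$ is a CCT-group.

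The computations are routine once the Sylow picture is in place; the only point requiring a little care is the bookkeeping in the case $n_7=1$, where one must make sure that in \emph{every} non-abelian semidirect product $\mathbb{Z}_7\rtimes P_2$ there is an abelian normal subgroup of index $2$ — this is where it is used both that a group of order $4$ is automatically abelian and that the action factors through a quotient of order at most $2$, so that the relevant subgroup indeed has index $2$. Alternatively, one could feed the list of groups of order $56$ to \verb|GAP4| and verify the centralizer condition of Proposition \ref{prop:CCT}(3) directly, but the structural argument makes the uniform conclusion transparent.
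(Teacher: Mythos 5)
Your proof is correct, but it takes a genuinely different route from the paper. The paper's proof is a table: for each of the ten non-abelian groups of order $56$ (with the presentations from the appendix) it exhibits an explicit abelian normal subgroup of prime index and invokes Proposition \ref{prop:small-CCT}(3) --- index $2$ in nine cases, and index $7$ with $N \simeq (\mathbb{Z}_2)^3$ for $G(56,\,11)$. You instead avoid any enumeration: Sylow theory splits the problem into $n_7=1$, where the normality of $P_7$ and the fact that the action of a Sylow $2$-subgroup on $\mathbb{Z}_7$ factors through a group of order at most $2$ always produce an abelian normal subgroup of index $2$, and $n_7=8$, where counting forces $G\simeq(\mathbb{Z}_2)^3\rtimes\mathbb{Z}_7$ and you verify Proposition \ref{prop:CCT}(3) directly via the Frobenius structure. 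Your argument buys uniformity and independence from the classification/\verb|GAP4| database, at the cost of more structure theory; the paper's table is longer but each entry is a one-line verification. Two small remarks: in your second case the Frobenius analysis is more than is needed, since $(\mathbb{Z}_2)^3$ is itself an abelian normal subgroup of prime index $7$, so Proposition \ref{prop:small-CCT}(3) applies at once (this is exactly the paper's entry for $G(56,\,11)$); and in the paper's notational convention the dihedral group of order $8$ is written $\mathsf{D}_8$, not $\mathsf{D}_4$.
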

\begin{proof}
In fact, all non-abelian groups of order $56$ contain
an abelian normal subgroup $N$ of prime index, as shown in the table below.
\begin{table}[H]
  \begin{center}
    \begin{tabularx}{0.72\textwidth}{@{}cccc@{}}
      \toprule
      $\mathrm{IdSmallGroup}(G)$ &  $N$ & Structure description of $N$ &
      $[G:N]$ \\
      \toprule
$G(56, \, 1)$ & $\langle x^{-2}, \,  y \rangle$ & $\mathbb{Z}_{28}$ & $2$ \\
$G(56, \, 3)$ & $\langle x^{-2}, \,  y \rangle$ & $\mathbb{Z}_{28}$ & $2$ \\
$G(56, \, 4)$ & $\langle y, \, z \rangle$ & $\mathbb{Z}_{28}$ & $2$ \\
$G(56, \, 5)$ & $\langle y \rangle$ & $\mathbb{Z}_{28}$ & $2$ \\
$G(56, \, 6)$ & $\langle x^{-2}, \,  y \rangle$ & $\mathbb{Z}_2 \times
\mathbb{Z}_{14}$ & $2$ \\
$G(56, \, 7)$ & $\langle a, \, yx, \, y^2 \rangle$ & $\mathbb{Z}_2 \times
\mathbb{Z}_{14}$ & $2$ \\
$G(56, \, 9)$ & $\langle y, \, z \rangle$ & $\mathbb{Z}_{28}$ & $2$ \\
$G(56, \, 10)$ & $\langle i, \, z \rangle$ & $\mathbb{Z}_{28}$ & $2$ \\
$G(56, \, 11)$ & $\langle a, \, b, \, c \rangle$ & $(\mathbb{Z}_2)^3$ & $7$ \\
$G(56, \, 12)$ & $\langle y, \, a, \, b \rangle$ & $\mathbb{Z}_2 \times
\mathbb{Z}_{14}$ & $2$ \\
\bottomrule
       \end{tabularx}
  \end{center}
   \end{table}
\end{proof}

\subsection{The case \texorpdfstring{$|G|=60$}{|G|=60}} \label{subsec:60}

\begin{proposition} \label{prop:60-non-cct}
The only non-abelian group of order $60$ which are not \emph{CCT}-groups
are $G(60, \, 7)$ and $G(60, \, 8)$.
\end{proposition}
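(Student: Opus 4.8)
The plan is to follow exactly the same pattern used in the preceding cases ($|G|=36,40,48,54$): split the thirteen non-abelian groups of order $60$ into three bins. First, I would list those groups $G(60,t)$ which contain an abelian normal subgroup $N$ of prime index, since by Proposition \ref{prop:small-CCT}(3) these are automatically CCT-groups; I expect this to cover the majority of cases (for instance $G(60,1)\cong \mathbb{Z}_3\rtimes\mathbb{Z}_{20}$ and the metacyclic/abelian-by-prime-cyclic types all have an index-$2$ or index-$5$ abelian normal subgroup). These will be collected in a table with columns $\mathrm{IdSmallGroup}(G)$, $N$, structure description of $N$, and $[G:N]$, exactly as above, using the presentations from the Appendix.

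Second, for the groups not treated by Proposition \ref{prop:small-CCT} but which are nonetheless CCT — a short list, plausibly including the groups of the form $\mathbb{Z}_2\times(\text{order }30)$ or similar — I would verify the CCT property directly via Proposition \ref{prop:CCT}(3): compute $Z(G)$, enumerate the conjugacy classes of non-central elements, exhibit a representative for each, and check that every corresponding centralizer $C_G(x)$ is abelian (giving its isomorphism type, e.g. $\mathbb{Z}_{2}\times\mathbb{Z}_{10}$, $\mathbb{Z}_{15}$, $\mathbb{Z}_{30}$, etc.). If all centralizers of non-central elements are abelian, $G$ is CCT.

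Third, for the two exceptional groups $G(60,7)$ and $G(60,8)$, I would simply exhibit a witness to the failure of transitivity: three non-central elements $x,y,z$ with $[x,y]=[y,z]=1$ but $[x,z]\neq 1$. Since $G(60,8)\cong\mathsf{A}_5$ (the non-solvable group of order $60$) and $\mathsf{A}_5$ is well known not to be CCT — e.g. $[(1\,2)(3\,4),\,(1\,3)(2\,4)]=1$ and $[(1\,3)(2\,4),\,(1\,2)(3\,5)]=1$ but the outer pair fails to commute — one witness there is immediate. For $G(60,7)\cong \mathsf{S}_3\times\mathbb{Z}_{10}$ (or the analogous product), the centralizer of a transposition-type element is non-abelian, so the same kind of witness as in the $G(36,10)=\mathsf{S}_3\times\mathsf{S}_3$ case applies: take $[x,\,y^2]=[y,\,y^2]=1$ with $[x,y]\neq1$ inside the $\mathsf{S}_3$ factor tensored against an order-$2$ element of the cyclic factor.

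The main obstacle is purely bookkeeping rather than conceptual: correctly matching the GAP \verb|SmallGroup| labels $G(60,t)$ to concrete presentations and to the structure descriptions, and making sure the abelian normal subgroups $N$ named in the first table are genuinely normal of the claimed index — this is exactly the sort of routine verification that the authors have been doing by hand for the smaller orders and that, in the present excerpt, is cut off before completion. No new idea beyond Propositions \ref{prop:CCT} and \ref{prop:small-CCT} is needed.
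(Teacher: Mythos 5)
Your overall template (abelian normal subgroup of prime index via Proposition \ref{prop:small-CCT}, direct centralizer checks via Proposition \ref{prop:CCT}, explicit non-transitivity witnesses for the exceptions) is exactly the paper's strategy, but your execution of the decisive third step rests on misidentified groups, and the mathematical claims you make there are false. In the \verb|GAP4| labelling used throughout the paper (and in its Appendix), $G(60,\,7)$ is the metacyclic group $\mathsf{D}_{4,\,15,\,-7}=\langle x,\,y \mid x^4=y^{15}=1,\ xyx^{-1}=y^{-7}\rangle$ and $G(60,\,8)$ is $\mathsf{S}_3\times\mathsf{D}_{10}$; neither is $\mathsf{A}_5$ nor $\mathsf{S}_3\times\mathbb{Z}_{10}$. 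The alternating group is $G(60,\,5)$, and it is in fact a CCT-group: the centralizers of non-trivial elements are $\mathbb{Z}_2\times\mathbb{Z}_2$, $\mathbb{Z}_3$, $\mathbb{Z}_5$, $\mathbb{Z}_5$, all abelian (the centralizer of a double transposition in $\mathsf{A}_5$ is the Klein group). Your proposed witness in $\mathsf{A}_5$ is also simply wrong: $(1\,3)(2\,4)$ and $(1\,2)(3\,5)$ do not commute, so the claimed chain $[(1\,2)(3\,4),(1\,3)(2\,4)]=[(1\,3)(2\,4),(1\,2)(3\,5)]=1$ fails. Similarly, $\mathsf{S}_3\times\mathbb{Z}_{10}$ is $G(60,\,11)$ and is a CCT-group by Proposition \ref{prop:small-CCT}(3), since it contains the abelian normal subgroup $\mathbb{Z}_3\times\mathbb{Z}_{10}\simeq\mathbb{Z}_{30}$ of index $2$; the centralizer of a transposition there is $\mathbb{Z}_2\times\mathbb{Z}_{10}$, which is abelian, contrary to what you assert. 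So as written, your argument would ``prove'' the wrong statement: it would exhibit two CCT-groups as non-CCT and leave the genuinely non-CCT groups untreated.

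What the proof actually requires at that step is: for $G(60,\,8)=\mathsf{S}_3\times\mathsf{D}_{10}$, note that the centralizer of an involution in one factor contains the whole other (non-abelian) factor, giving a witness such as $[x,\,a]=[a,\,y]=1$ with $[x,\,y]\neq 1$ in the notation of the Appendix; and for $G(60,\,7)=\mathsf{D}_{4,\,15,\,-7}$, observe that $C_G(x^2)\supseteq\langle x,\,y^5\rangle$ is non-abelian (the paper's witness is $[x,\,x^2]=[x^2,\,yx^2y]=1$ but $[x,\,yx^2y]\neq 1$). Note also that $\mathsf{A}_5$ and $G(60,\,6)$ have no abelian normal subgroup of prime index (the former being simple), so they belong to your ``second bin'' and need the direct centralizer computation, which the paper carries out. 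The bookkeeping of matching labels to presentations, which you dismiss as routine, is precisely where your proposal breaks down.
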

\begin{proof}
The following groups contain an abelian normal subgroup $N$ of prime index,
so they are CCT-groups.
\begin{table}[H]
  \begin{center}
    \begin{tabularx}{0.75\textwidth}{@{}cccc@{}}
      \toprule
      $\mathrm{IdSmallGroup}(G)$ &  $N$ & Structure description of $N$ &
      $[G:N]$ \\
      \toprule
$G(60, \, 1)$ & $\langle x^{-2}, \,  y, \, z \rangle$ & $\mathbb{Z}_{30}$
& $2$ \\
$G(60, \, 2)$ & $\langle x^{-2}, \,  y, \, z \rangle$ & $\mathbb{Z}_{30}$
& $2$ \\
$G(60, \, 3)$ & $\langle x^{-2}, \,  y \rangle$ & $\mathbb{Z}_{30}$ & $2$ \\
$G(60, \, 9)$ & $\mathsf{V}_4 \times \mathbb{Z}_5$ &$\mathbb{Z}_2 \times
\mathbb{Z}_{10}$ & $3$ \\
$G(60, \, 10)$ & $\langle  y, \, z \rangle$ & $\mathbb{Z}_{30}$ & $2$ \\
$G(60, \, 11)$ & $\langle (1\, 2\, 3) \rangle \times \mathbb{Z}_{10}$ &
$\mathbb{Z}_{30}$ & $2$ \\
$G(60, \, 12)$ & $\langle  y  \rangle$ & $\mathbb{Z}_{30}$ & $2$ \\
\bottomrule
\end{tabularx}
  \end{center}
   \end{table}

Let us consider the other cases. The group $G(60, \, 5) \simeq \mathsf{A}_5$ has trivial center	and four
conjugacy classes of non-trivial elements, whose representatives are
\begin{equation}
(1\, 2)(3\, 4), \, (1\, 2\, 3), \, (1\, 2\, 3\, 4\, 5),\, (1\, 2\, 3\, 5\, 4).
\end{equation}
The respective centralizers are
\begin{equation}
\mathbb{Z}_2 \times \mathbb{Z}_2, \, \mathbb{Z}_3, \,  \mathbb{Z}_5, \,
\mathbb{Z}_5,
\end{equation}
hence $G(60, \, 5)$ is a CCT-group.

The group $G(60, \, 6)$ has center $\langle z \rangle \simeq \mathbb{Z}_3$
and $12$ conjugacy classes of non-central elements, whose representatives are
\begin{equation}
x, \, x^2, \, y, \, xz, \, x^3, \, x^2z, \, yz, \, xz^2, \, x^3z, \, x^2z^2,
\, yz^2, \, x^3z^2.
\end{equation}
The respective stabilizers are isomorphic to $\mathbb{Z}_{12}$, except in the
cases $y$, $yz$, $yz^2$, where they are isomorphic to  $\mathbb{Z}_{15}$. Thus
$G(60, \, 6)$ is a CCT-group.

Finally, let us show that the two remaining non-abelian groups of order $60$
 are not CCT-groups. In both cases, we exhibit three non-central elements
 for which commutativity is not a transitive relation.
\begin{itemize}
\item $G(60, \, 7)$. We have $[x, \, x^2]=[x^2, \, yx^2y] =1$, but $[x, \,
yx^2y] \neq 1$.
\item $G(60, \, 8)$. We have $[x, \, a]=[a, \, y]=1$, but $[x, \, y] \neq 1$.
\end{itemize}
\end{proof}

\section{Group-theoretical preliminaries II: monolithic groups} \label{sec:group-prel-II}
Given a finite group $G$, we define $\operatorname{mon}(G)$ as 
the intersection of all the non-trivial normal subgroups of $G$.  
For instance, $G$ is simple if and only if $\operatorname{mon}(G)=G$. 
The group $G$ is said to be \emph{monolithic} if  $\operatorname{mon}(G) \neq \{1\}$. 
Equivalently, $G$ is monolithic if it contains precisely one minimal non-trivial, normal subgroup.\footnote{In \cite[Section 4]{PolSab22} we called the intersection of all 
non-trivial normal subgroup of a finite group $G$ the \emph{socle} of $G$, and we denoted it by
$\operatorname{soc}(G)$. However, we later realized that this is  a non-standard terminology: 
indeed, in the literature, $\operatorname{soc}(G)$ generally denotes the subgroup of 
$G$ generated by the minimal, non-trivial normal subgroups, so we prefer to use here the notation
$\operatorname{mon}(G)$. By definition, if $G$ is monolithic then $\operatorname{mon}(G)$ 
is non-trivial and coincides with $\operatorname{soc}(G)$. Instead, if $G$ is non-monolithic, 
then  $\operatorname{mon}(G)=\{1\}$ but $\operatorname{soc}(G)$ is always non-trivial. 
For instance, if $G=G(64,\, 199)$ then $\operatorname{mon}(G)=\{1\}$ and
$\operatorname{soc}(G) \simeq \mathbb{Z}_2 \times \mathbb{Z}_2$.}
For instance, a non-trivial product of the form $G=H \times K$ is never monolithic, 
since $H \times \{1\}$ and $\{1\} \times K$ are two non-trivial normal subgroups
intersecting only at the identity.   

\begin{proposition} \label{prop:Mon-if-center-non-trivial}
If $G$ is monolithic and $Z(G) \neq \{1\}$, then $\operatorname{mon}(G)$ is cyclic of prime order. 
\end{proposition}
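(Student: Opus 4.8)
The plan is to show that the hypothesis $Z(G) \neq \{1\}$ forces $\operatorname{mon}(G)$ to be a central subgroup, and then to pin it down as a cyclic group of prime order using Cauchy's theorem together with the defining minimality property of $\operatorname{mon}(G)$.

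First I would note that, since $Z(G) \neq \{1\}$, the centre is a non-trivial normal subgroup of $G$; because $\operatorname{mon}(G)$ is by definition the intersection of \emph{all} non-trivial normal subgroups of $G$, this gives $\operatorname{mon}(G) \subseteq Z(G)$. Consequently $\operatorname{mon}(G)$ is abelian and, more importantly, every subgroup of $\operatorname{mon}(G)$ is normal in $G$, being central.

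Next, since $G$ is monolithic we have $\operatorname{mon}(G) \neq \{1\}$, so I would pick a prime $p$ dividing $|\operatorname{mon}(G)|$ and, by Cauchy's theorem, an element $x \in \operatorname{mon}(G)$ with $o(x)=p$. By the previous paragraph $\langle x \rangle$ is a non-trivial normal subgroup of $G$ of order $p$, hence $\operatorname{mon}(G) \subseteq \langle x \rangle$ by the definition of $\operatorname{mon}(G)$; combined with $x \in \operatorname{mon}(G)$, which gives $\langle x \rangle \subseteq \operatorname{mon}(G)$, this yields $\operatorname{mon}(G) = \langle x \rangle \simeq \mathbb{Z}_p$, as desired.

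There is no real obstacle here: the only step that requires a moment's thought is the first one, namely observing that $Z(G) \neq \{1\}$ is exactly what guarantees $\operatorname{mon}(G)$ is central — this is what makes a prime-order element of $\operatorname{mon}(G)$ generate a normal subgroup of $G$, after which the minimality built into the definition of $\operatorname{mon}(G)$ finishes the argument in one line.
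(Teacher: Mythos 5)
Your proof is correct and follows essentially the same route as the paper: both arguments first observe that $\operatorname{mon}(G) \subseteq Z(G)$, so that every subgroup of $\operatorname{mon}(G)$ is normal in $G$, and then conclude by minimality; you merely make the final step explicit by invoking Cauchy's theorem to produce the prime-order generator, where the paper leaves that detail to the reader.
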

\begin{proof}
By definition $\operatorname{mon}(G) \subseteq Z(G)$, hence  $\operatorname{mon}(G)$ is abelian and all its subgroups are normal in $G$. Thus the claim follows by the minimality of  $\operatorname{mon}(G)$. 
\end{proof}

Since a $p$-group has non-trivial center, Proposition \ref{prop:Mon-if-center-non-trivial} immediately implies

\begin{corollary} \label{cor:monolithic-p-group}
If $G$ is a monolithic $p$-group, then $\operatorname{mon}(G) \simeq \mathbb{Z}_p$.
\end{corollary}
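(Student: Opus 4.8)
The plan is to deduce the statement directly from Proposition \ref{prop:Mon-if-center-non-trivial}, combined with the classical fact that a non-trivial finite $p$-group has non-trivial center. First I would recall the latter: if $|G|>1$ is a power of $p$, then the class equation
\[
|G| = |Z(G)| + \sum_i [G : C_G(x_i)],
\]
where the $x_i$ run over representatives of the non-central conjugacy classes, forces $p \mid |Z(G)|$, since $p$ divides $|G|$ and each index $[G:C_G(x_i)]$; hence $Z(G) \neq \{1\}$.

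Next, since $G$ is assumed monolithic and we have just shown $Z(G) \neq \{1\}$, Proposition \ref{prop:Mon-if-center-non-trivial} applies and yields that $\operatorname{mon}(G)$ is cyclic of prime order, say $\operatorname{mon}(G) \simeq \mathbb{Z}_q$ for some prime $q$. Finally, $\operatorname{mon}(G)$ is a subgroup of $G$, so by Lagrange its order $q$ divides $|G|$, which is a power of $p$; therefore $q=p$, and $\operatorname{mon}(G) \simeq \mathbb{Z}_p$, as claimed.

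There is essentially no obstacle here: the only input beyond Proposition \ref{prop:Mon-if-center-non-trivial} is the non-triviality of the center of a $p$-group, which is standard. The one point worth stating explicitly is the last step — that in a $p$-group the prime furnished by Proposition \ref{prop:Mon-if-center-non-trivial} can only be $p$ itself — so that the conclusion is the sharper statement $\operatorname{mon}(G) \simeq \mathbb{Z}_p$ rather than merely ``cyclic of prime order''.
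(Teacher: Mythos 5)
Your argument is correct and coincides with the paper's own deduction: the paper likewise notes that a $p$-group has non-trivial center and then invokes Proposition \ref{prop:Mon-if-center-non-trivial}, with the identification of the prime as $p$ being immediate from Lagrange since $\operatorname{mon}(G)$ is a subgroup of a $p$-group. Your write-up only spells out the standard class-equation argument and the final Lagrange step, which the paper leaves implicit.
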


We end this section by classifying the non-abelian groups that are both non-CCT and monolithic, up to order $63$. The corresponding analysis for groups of order $64$ is more involved and, due to the large number of such groups, required the systematic use of the computer; we will postpone it to Section \ref{sec:order_64}.

\begin{proposition} \label{prop:non-CCT_non-mono}
Let $G$ be a non-abelian, monolithic group of order $|G| \leq 63$. If $G$ is not a $\operatorname{CCT}$-group, then $G$ is isomorphic to one of the following groups$:$
\begin{equation}
\mathsf{S}_4, \; \mathsf{H}_5(\mathbb{Z}_2), \; \mathsf{G}_5(\mathbb{Z}_2), \; G(54, \, 5), \; G(54, \, 6).
\end{equation}
\end{proposition}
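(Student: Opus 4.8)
The plan is to combine the \emph{CCT}-classification carried out above with a direct inspection of the finitely many groups that remain. By Proposition \ref{prop:CCT-order-less-64}, a non-abelian non-\emph{CCT} group $G$ with $|G| \leq 63$ has either $|G| \leq 32$ or $|G| \in \{36, \, 40, \, 48, \, 54, \, 56, \, 60\}$, and the corresponding propositions enumerate explicitly all non-\emph{CCT} groups in each of these orders: for $|G| \leq 32$ these are $\mathsf{S}_4$, $\mathsf{H}_5(\mathbb{Z}_2)$, $\mathsf{G}_5(\mathbb{Z}_2)$ by \cite[Section 2]{PolSab22}; for $|G| = 36$ only $G(36, \, 10) \simeq \mathsf{S}_3 \times \mathsf{S}_3$ (Proposition \ref{prop:36-non-cct}); for $|G| \in \{40, \, 56\}$ there are none (Propositions \ref{prop:40-non-cct} and \ref{prop:56-non-cct}); for $|G| = 48$ the ten groups $G(48, \, t)$ with $t \in \{15, \, 16, \, 17, \, 18, \, 30, \, 38, \, 39, \, 40, \, 41, \, 48\}$ (Proposition \ref{prop:48-non-cct}); for $|G| = 54$ the two groups $G(54, \, 5)$, $G(54, \, 6)$ (Proposition \ref{prop:54-non-cct}); and for $|G| = 60$ the two groups $G(60, \, 7)$, $G(60, \, 8)$ (Proposition \ref{prop:60-non-cct}). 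It therefore suffices to go through this finite list and decide which entries are monolithic, checking that precisely the five groups in the statement are.

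On the non-monolithic side, the first step is to remove the evident direct products, using the fact recorded in Section \ref{sec:group-prel-II} that a non-trivial direct product is never monolithic: this disposes of $G(36, \, 10) \simeq \mathsf{S}_3 \times \mathsf{S}_3$, of $G(48, \, 48) \simeq \mathsf{S}_4 \times \mathbb{Z}_2$, and of $G(48, \, 38)$ and $G(48, \, 40)$, which the Appendix presentations exhibit as direct products of $\mathsf{S}_3$ with a non-abelian group of order $8$; the two order-$60$ groups are handled similarly, each being, after identification from the Appendix, either a non-trivial direct product or a group containing two normal subgroups of coprime prime orders $3$ and $5$. For the remaining order-$48$ groups $G(48, \, t)$ with $t \in \{15, \, 16, \, 17, \, 18, \, 30, \, 39, \, 41\}$ the plan is to read off from the presentation two distinct minimal normal subgroups -- typically a normal $\mathbb{Z}_3$ coming from a normal Sylow $3$-subgroup together with a minimal normal $2$-subgroup, or two distinct central involutions when $Z(G)$ contains a Klein four-group -- so that $\operatorname{mon}(G) = \{1\}$ by definition.

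On the monolithic side, the $2$-groups $\mathsf{H}_5(\mathbb{Z}_2)$ and $\mathsf{G}_5(\mathbb{Z}_2)$ are immediate: every non-trivial normal subgroup of a $p$-group meets the centre non-trivially, and since both groups are extra-special we have $Z(G) \simeq \mathbb{Z}_2$, so $Z(G)$ lies in every non-trivial normal subgroup and $\operatorname{mon}(G) = Z(G) \simeq \mathbb{Z}_2$. For $\mathsf{S}_4$ the normal subgroups are $\{1\}$, $\mathsf{V}_4$, $\mathsf{A}_4$, $\mathsf{S}_4$, whence $\operatorname{mon}(\mathsf{S}_4) = \mathsf{V}_4$. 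The two genuinely delicate cases are $G(54, \, 5)$ and $G(54, \, 6)$: here the Sylow $3$-subgroup $P$ is normal of index $2$ and $Z(P)$ is characteristic in $P$, hence normal in $G$; one checks that $Z(P) \simeq \mathbb{Z}_3$, so that $Z(P)$ is contained in every minimal normal subgroup of $3$-power order, while a minimal normal $2$-subgroup would be a central involution and would force $G \simeq \mathbb{Z}_2 \times P$ with $P$ the Heisenberg group of order $27$ -- but that group is a \emph{CCT}-group, since the centralizer of every non-central element is abelian (Proposition \ref{prop:CCT}), contradicting the hypothesis on $G$. Hence $\operatorname{mon}(G) = Z(P) \simeq \mathbb{Z}_3$ and these two groups are monolithic.

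The step I expect to be the main obstacle is exactly this last one: establishing that $G(54, \, 5)$ and $G(54, \, 6)$ have no second minimal normal subgroup requires knowing the isomorphism type of their Sylow $3$-subgroup and the action of the order-$2$ complement on it precisely enough, and this is the point at which a short verification with \verb|GAP4| (see \cite{GAP4}) is the cleanest route -- consistent with the remark in the Introduction that, among groups of order below $64$, exactly two order-$54$ groups require machine assistance. All the other verifications amount to exhibiting one or two normal subgroups directly from the Appendix presentations and are routine.
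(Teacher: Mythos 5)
Your proposal is correct and follows the paper's global strategy: reduce, via Proposition \ref{prop:CCT-order-less-64} and the case-by-case classifications (Propositions \ref{prop:36-non-cct}--\ref{prop:60-non-cct}, plus the order $\leq 32$ results of \cite{PolSab22}), to the same finite list of non-CCT groups, and then decide monolithicity group by group by exhibiting normal subgroups. Where you genuinely diverge is in the treatment of $G(54,\,5)$ and $G(54,\,6)$: the paper simply lists all their non-trivial normal subgroups and reads off $\operatorname{mon}(G)=\langle b\rangle$, resp.\ $\langle y^3\rangle\simeq\mathbb{Z}_3$, whereas you argue structurally that the Sylow $3$-subgroup $P$ is normal, that every non-trivial normal subgroup contained in $P$ meets and hence contains $Z(P)\simeq\mathbb{Z}_3$, and that a minimal normal $2$-subgroup would be a central involution forcing $G\simeq\mathbb{Z}_2\times P$, which is a CCT-group by Proposition \ref{prop:small-CCT}, contradicting Proposition \ref{prop:54-non-cct}. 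This is a clean conceptual shortcut and, contrary to your closing worry, it makes machine assistance unnecessary for this step; note also that the Introduction's remark about two order-$54$ groups requiring \texttt{GAP4} refers to excluding diagonal double Kodaira structures on them in Proposition \ref{prop: no_ddks_less_63}, not to the monolithicity check, and the paper's proof of the present proposition is entirely computer-free. Two small cautions: for $G(54,\,6)$ the Sylow $3$-subgroup is the modular group of order $27$ (exponent $9$), not the Heisenberg group, although $\mathbb{Z}_2\times P$ is still CCT since $P$ has an abelian normal subgroup of index $3$; and your ``typical'' recipe for the remaining order-$48$ non-monolithic groups (a normal $\mathbb{Z}_3$ from a normal Sylow $3$-subgroup together with a normal $2$-subgroup, or two central involutions) does not cover $G(48,\,30)\simeq\mathsf{A}_4\rtimes\mathbb{Z}_4$, which has no normal Sylow $3$-subgroup; there one should take, as the paper does, $N_1=\langle x^2\rangle\simeq\mathbb{Z}_2$ and $N_2\simeq\mathsf{A}_4$. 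With the thirteen non-monolithic cases actually written out (the paper exhibits an explicit pair $N_1$, $N_2$ with $N_1\cap N_2=\{1\}$ for each), your argument is complete and yields the same list of five groups.
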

\begin{proof}
Clearly $\operatorname{mon}(\mathsf{S}_4) = \mathsf{V}_4$, whereas for the two extra-special groups of order $32$ we have  $\operatorname{mon}(G) = Z(G) \simeq \mathbb{Z}_2$. We can therefore assume $|G| \geq 33$.  In Section \ref{sec:group-prel-I} we showed that there are $15$ non-abelian, non-CCT groups $G$ of order $33 \leq |G| \leq 63$, namely
\begin{table}[H]
  \begin{center}
    \begin{tabularx}{0.65\linewidth}{@{}ccccc@{}}
$G(36, \, 10),$ & $G(48,\, 15),$ & $G(48,\, 16),$ & $G(48,\, 17),$ & $G(48,\, 18),$ \\
$G(48,\, 30),$ & $G(48,\, 38),$ & $G(48,\, 39),$ & $G(48,\, 40),$ & $G(48,\, 41),$ \\
$G(48,\, 48),$ & $G(54,\, 5),$ & $G(54,\, 6),$ & $G(60,\, 7),$ & $G(60,\, 8)$. 
\end{tabularx}
  \end{center}
\end{table}
Let us now show that all these groups, except $G(54, \, 5)$ and $G(54, \, 6)$, contain a pair $N_1$, $N_2$ of non-trivial normal subgroups such that $N_1 \cap N_2 = \{1\}$. 
\begin{itemize}
\item $G(36,\, 10)$. Take $N_1=\mathsf{S}_3\times \{1\}$ and $N_2 = \{1\} \times \mathsf{S}_3$. 
\item $G(48, \, 15)$. Take $N_1 = \langle y^6 \rangle \simeq \mathbb{Z}_2$ and $N_2 = \langle y^4 \rangle \simeq \mathbb{Z}_3$.
\item $G(48, \, 16)$. Take $N_1 = \langle x^2 \rangle \simeq \mathbb{Z}_2$ and $N_2 = \langle z \rangle \simeq \mathbb{Z}_3$.
\item $G(48, \, 17)$. Take $N_1 = \langle i^2 \rangle \simeq \mathbb{Z}_2$ and $N_2 = \langle a \rangle \simeq \mathbb{Z}_3$.
\item $G(48, \, 18)$. Take $N_1 = \langle y^2 \rangle \simeq \mathbb{Z}_2$ and $N_2 = \langle a \rangle \simeq \mathbb{Z}_3$.
\item $G(48, \, 30)$. Take $N_1 = \langle x^2 \rangle \simeq \mathbb{Z}_2$ and $N_2 = \langle y, \, z \rangle \simeq \mathsf{A}_4$.
\item $G(48,\, 38)$. Take $N_1=\mathsf{S}_3 \times \{1\}$ and $N_2 = \{1\} \times \mathsf{D}_8$. 
\item $G(48, \, 39)$. Take $N_1 = \langle b^2 \rangle \simeq \mathbb{Z}_2$ and $N_2 = \langle d \rangle \simeq \mathbb{Z}_3$.
\item $G(48,\, 40)$. Take $N_1=\mathsf{S}_3 \times \{1\}$ and $N_2 = \{1\} \times \mathsf{Q}_8$. 
\item $G(48, \, 41)$. Take $N_1 = \langle b^2 \rangle \simeq \mathbb{Z}_2$ and $N_2 = \langle a^4 \rangle \simeq \mathbb{Z}_3$.
\item $G(48,\, 48)$. Take $N_1=\mathsf{S}_4 \times \{1\}$ and $N_2 = \{1\} \times \mathbb{Z}_2$. 
\item $G(60, \, 7)$. Take $N_1 = \langle y^5 \rangle \simeq \mathbb{Z}_3$ and $N_2 = \langle y^3 \rangle \simeq \mathbb{Z}_5$.
\item $G(60,\, 8)$. Take $N_1=\mathsf{S}_3 \times \{1\}$ and $N_2 = \{1\} \times \mathsf{D}_{10}$. 
\end{itemize}
It remains to prove that $G(54, \, 5)$ and $G(54, \, 6)$ are monolithic groups. 

The group $G=G(54, 5)$ contains five non-trivial normal subgroups, namely
\begin{equation}
G, \; \langle a, \, b, \, x^2 \rangle, \; \langle a, \, b, \, x^3 \rangle, \; \langle b, \, x^2 \rangle, \; \langle a, \, b \rangle, \; \langle b \rangle.
\end{equation}
So $\operatorname{mon}(G) = \langle b \rangle \simeq \mathbb{Z}_3$.

The group $G=G(54, 6)$ contains five non-trivial normal subgroups, namely
\begin{equation}
G, \; \langle x^2, \, y \rangle, \; \langle x^3, \, y \rangle, \; \langle x^2, \, y^3 \rangle, \; \langle y \rangle, \; \langle y^3 \rangle.
\end{equation}
So $\operatorname{mon}(G) = \langle y^3 \rangle \simeq \mathbb{Z}_3$.
\end{proof}

\section{Diagonal double Kodaira structures and diagonal double Kodaira fibrations}
\label{sec:DDKS}
For more details about the content of this section, we refer the reader to \cite{Pol22} and  \cite{PolSab22}. 

Let $G$ be a finite group and let $b, \, n \geq 2$ be two positive integers.  A \emph{diagonal double Kodaira structure} of type $(b, \, n)$ on $G$
  is an ordered set of	$4b+1$ generators
  \begin{equation} \label{eq:ddks}
    \S = (\r_{11}, \, \t_{11}, \ldots, \r_{1b}, \, \t_{1b}, \;
      \r_{21}, \,
    \t_{21}, \ldots, \r_{2b}, \, \t_{2b}, \; \z ),
  \end{equation}
  with $o(\z)=n$, such that the following relations are satisfied. We use the commutator notation in order to indicate relations of conjugacy type, writing for instance $[x, \, y]=zy^{-1}$ instead of $xyx^{-1} = z$.
  \begin{itemize}
    \item {Surface relations}
      \begin{align} \label{eq:presentation-0}
	& [\r_{1b}^{-1}, \, \t_{1b}^{-1}] \,
	\t_{1b}^{-1} \, [\r_{1 \,b-1}^{-1}, \,
	\t_{1 \,b-1}^{-1}] \, \t_{1\,b-1}^{-1}
	\cdots [\r_{11}^{-1}, \, \t_{11}^{-1}]
	\, \t_{11}^{-1} \, (\t_{11} \, \t_{12}
	\cdots \t_{1b})=\z \\
	& [\r_{21}^{-1}, \, \t_{21}] \, \t_{21} \,
	[\r_{22}^{-1}, \, \t_{22}] \,
	\t_{22}\cdots	[\r_{2b}^{-1}, \, \t_{2b}]
	\, \t_{2b} \, (\t_{2b}^{-1} \,
	\t_{2 \, b-1}^{-1} \cdots \t_{21}^{-1})
	=\z^{-1}
      \end{align}
    \item {Conjugacy action of} $\r_{1j}$
      \begin{align} \label{eq:presentation-1}
	[\r_{1j}, \, \r_{2k}]& =1  &  \mathrm{if}
	\; \; j < k \\
	[\r_{1j}, \, \r_{2j}]& = 1 & \\
	[\r_{1j}, \, \r_{2k}]& =\z^{-1} \, \r_{2k}\,
	\r_{2j}^{-1} \, \z \, \r_{2j}\,
	\r_{2k}^{-1} \; \;&  \mathrm{if} \;  \;
	j > k \\
	& \\
	[\r_{1j}, \, \t_{2k}]& =1 & \mathrm{if}\;
	\; j < k \\
	[\r_{1j}, \, \t_{2j}]& = \z^{-1} & \\
	[\r_{1j}, \, \t_{2k}]& =[\z^{-1}, \, \t_{2k}]
	& \mathrm{if}\;  \; j > k \\
	& \\
	[\r_{1j}, \,\z]& =[\r_{2j}^{-1}, \,\z] &
      \end{align}
    \item {Conjugacy action of} $\t_{1j}$
      \begin{align} \label{eq:presentation-3}
	[\t_{1j}, \, \r_{2k}]& =1 & \mathrm{if}\;
	\; j < k \\
	[\t_{1j}, \, \r_{2j}]& = \t_{2j}^{-1}\,
	\z\, \t_{2j} & \\
	[\t_{1j}, \, \r_{2k}]& =[\t_{2j}^{-1},\,
	\z] \; \; & \mathrm{if}  \;\; j >
	k \\
	& \\
	[\t_{1j}, \, \t_{2k}]& =1 & \mathrm{if}\;
	\; j < k \\
	[\t_{1j}, \, \t_{2j}]& = [\t_{2j}^{-1}, \,
	\z] & \\
	[\t_{1j}, \, \t_{2k}]& =\t_{2j} ^{-1}\,\z\,
	\t_{2j}\, \z^{-1}\, \t_{2k}\,\z\,
	\t_{2j} ^{-1}\,\z^{-1}\, \t_{2j}\,\t_{2k}^{-1}
	\; \;  & \mathrm{if}\;	\;
	j > k \\
	&  \\
	[\t_{1j}, \,\z]& =[\t_{2j}^{-1}, \,\z] &
      \end{align}
  \end{itemize}
Note that  $\mathsf{z}$ is a commutator of order $n \geq 2$ in $G$, hence $G$ is necessarily \emph{non-abelian}. 

For later use, let us write down the special case consisting of a diagonal
double Kodaira structure of
type $(2, \, n)$. It is an ordered set of nine generators of $G$
\begin{equation}
\S=  (\r_{11}, \, \t_{11}, \,  \r_{12}, \, \t_{12}, \, \r_{21}, \,
    \t_{21}, \,
  \r_{22}, \, \t_{22}, \, \z ),
\end{equation}
with $o(\z)=n \geq 2$, subject to the following relations:
\reqnomode
\begin{equation} \label{eq:ddks-genus-2}
  \begin{aligned}
    \mathbf{(S1)} & \,\, [\r_{12}^{-1}, \, \t_{12}^{-1}] \,
    \t_{12}^{-1} \,
    [\r_{11}^{-1}, \, \t_{11}^{-1}] \, \t_{11}^{-1}\, (\t_{11}
    \, \t_{12}) =
    \z & \\ \mathbf{(S2)} & \, \, [\r_{21}^{-1}, \, \t_{21}]
    \; \t_{21} \;
    [\r_{22}^{-1}, \, \t_{22}] \, \t_{22}\, (\t_{22}^{-1} \,
    \t_{21}^{-1})=
    \z^{-1} \\
    & \\
    \mathbf{(R1)} & \, \, [\r_{11}, \, \r_{22}]=1 & \mathbf{(R6)}
    & \, \,
    [\r_{12}, \, \r_{22}]=1 \\
    \mathbf{(R2)} & \, \, [\r_{11}, \, \r_{21}]=1 &
    \mathbf{(R7)} & \, \, [\r_{12}, \, \r_{21}]=
    \z^{-1}\,\r_{21}\,\r_{22}^{-1}\,\z\,\r_{22}\,\r_{21}^{-1} \\
    \mathbf{(R3)} & \, \, [\r_{11}, \, \t_{22}]=1 & \mathbf{(R8)}
    & \, \,
    [\r_{12}, \, \t_{22}]=\z^{-1} \\
    \mathbf{(R4)} & \, \, [\r_{11}, \, \t_{21}]=\z^{-1} &
    \mathbf{(R9)} & \, \,
    [\r_{12}, \, \t_{21}]=[\z^{-1}, \, \t_{21}] \\
    \mathbf{(R5)} & \, \, [\r_{11}, \, \z]=[\r_{21}^{-1}, \,
    \z] & \mathbf{(R10)}
    & \, \, [\r_{12}, \, \z]=[\r_{22}^{-1}, \, \z] \\
    & \\
    \mathbf{(T1)} & \, \, [\t_{11}, \, \r_{22}]=1 & \mathbf{(T6)}
    & \, \,
    [\t_{12}, \, \r_{22}]= \t_{22}^{-1}\, \z \, \t_{22} \\
    \mathbf{(T2)} & \, \, [\t_{11}, \, \r_{21}]=\t_{21}^{-1}\,
    \z \, \t_{21}
    & \mathbf{(T7)} & \, \, [\t_{12}, \, \r_{21}]= [\t_{22}^{-1},
    \, \z] \\
    \mathbf{(T3)} & \, \, [\t_{11}, \, \t_{22}]=1 & \mathbf{(T8)}
    & \, \,
    [\t_{12}, \, \t_{22}]=[\t_{22}^{-1}, \, \z] \\
    \mathbf{(T4)} & \, \, [\t_{11}, \, \t_{21}]=[\t_{21}^{-1},
    \, \z] &
    \mathbf{(T9)} & \, \, [\t_{12}, \, \t_{21}]=
    \t_{22}^{-1}\, \z \,\t_{22} \,\z^{-1} \,\t_{21} \,\z
    \,\t_{22}^{-1}\,\z^{-1}
    \,\t_{22}\,\t_{21}^{-1} \\
    \mathbf{(T5)} & \, \, [\t_{11}, \, \z]=[\t_{21}^{-1}, \,
    \z] & \mathbf{(T10)}
    & \, \, [\t_{12}, \, \z]=[\t_{22}^{-1}, \, \z] \\
  \end{aligned}
\end{equation}
\leqnomode

The definition of diagonal double Kodaira structure can be motivated by means of some concepts in geometric topology. Let $\Sigma_b$ be a closed Riemann surface of genus $b$ and let $\mathsf{P}_2(\Sigma_b)$ be the pure braid group with two strands on $\Sigma_b$, namely, the fundamental group of the ordered configuration space 
$\Sigma_b \times \Sigma_b - \Delta$, where $\Delta \subset \Sigma_b \times \Sigma_b$ is the diagonal. Let $A_{12} \in \mathsf{P}_2(\Sigma_b)$  be the braid corresponding to the homotopy class in $\Sigma_b \times \Sigma_b - \Delta$ of a loop in  $\Sigma_b \times \Sigma_b$ that winds once around the diagonal $\Delta$.   
Therefore we can state the 
\begin{proposition} \label{prop:correspondence_DDKS_covers}
 A finite group $G$ admits a diagonal double Kodaira structure of type $(b, \, n)$ if and only if  there is a surjective group homomorphism 
\begin{equation} \label{eq:varphi}
\varphi \colon \mathsf{P}_2(\Sigma_b)  \to G
\end{equation}
such that $\mathsf{z}=\varphi(A_{12})$ has order $n\geq 2$. Note that the last condition implies that $\varphi$ does not factor through $\pi_1(\Sigma_b \times \Sigma_b)$.  
\end{proposition}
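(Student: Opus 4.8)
The statement is essentially a translation dictionary between a purely algebraic datum (a diagonal double Kodaira structure of type $(b,\,n)$) and a topological one (an admissible epimorphism $\varphi \colon \mathsf{P}_2(\Sigma_b) \to G$ with $o(\varphi(A_{12}))=n$). The plan is to extract from the literature a finite presentation of $\mathsf{P}_2(\Sigma_b)$ with $4b+1$ generators --- precisely the presentation of Gonz\'alez-Meneses and Gonz\'alez-Lorca, or the variant in \cite{GG04} cited in the introduction --- and then match its relations, one by one, against the relations \eqref{eq:presentation-0}, \eqref{eq:presentation-1}, \eqref{eq:presentation-3} defining a structure. Concretely, I would fix generators of $\mathsf{P}_2(\Sigma_b)$ which I will name $\rho_{ij}, \tau_{ij}$ (for $i=1,2$, $j=1,\dots,b$) coming from the two ``copies'' of $\pi_1(\Sigma_b)$ corresponding to the two strands, together with the distinguished element $A_{12}$ winding around the diagonal; the surface relations of the two copies of $\pi_1(\Sigma_b)$ get ``twisted'' by $A_{12}$ exactly into the two relations in \eqref{eq:presentation-0}, while the mixed commutation relations between a first-strand generator and a second-strand generator are governed by the braid-type relations, yielding \eqref{eq:presentation-1} and \eqref{eq:presentation-3}.

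The forward direction is then immediate: given a structure $\S$ on $G$, the assignment $\rho_{ij}\mapsto \r_{ij}$, $\tau_{ij}\mapsto \t_{ij}$ (and hence $A_{12}\mapsto \z$, which is forced by relation \eqref{eq:presentation-0}) respects all defining relators of $\mathsf{P}_2(\Sigma_b)$ by construction, so it extends to a homomorphism $\varphi$; it is surjective because the $\r_{ij},\t_{ij}$ generate $G$, and $o(\varphi(A_{12}))=o(\z)=n\geq 2$. Conversely, given such a $\varphi$, set $\r_{ij}:=\varphi(\rho_{ij})$, $\t_{ij}:=\varphi(\tau_{ij})$, $\z:=\varphi(A_{12})$; these generate $G$ since $\varphi$ is onto and the $\rho_{ij},\tau_{ij}$ generate $\mathsf{P}_2(\Sigma_b)$, they satisfy all the structure relations because $\varphi$ kills the relators, and $o(\z)=n$ by hypothesis. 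The final remark --- that $o(\z)\geq 2$ forces $\varphi$ not to factor through $\pi_1(\Sigma_b\times\Sigma_b)$ --- follows because the kernel of the natural surjection $\mathsf{P}_2(\Sigma_b)\to\pi_1(\Sigma_b\times\Sigma_b)$ is normally generated by $A_{12}$, so if $\varphi$ factored through it we would have $\varphi(A_{12})=1$, i.e. $n=1$.

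The genuine work, and the main obstacle, is bookkeeping: one must be careful about which presentation of $\mathsf{P}_2(\Sigma_b)$ one starts from, since different sources order the strands and orient the loops differently, and the precise form of the ``twists'' by $A_{12}$ (the terms like $\z^{-1}\,\r_{2k}\,\r_{2j}^{-1}\,\z\,\r_{2j}\,\r_{2k}^{-1}$ in \eqref{eq:presentation-1}) depends on those choices. I would therefore devote the bulk of the proof to writing out the chosen presentation explicitly and verifying that, after the substitution $A_{12}=\z$ and the change of generators above, its relators become exactly \eqref{eq:presentation-0}--\eqref{eq:presentation-3}; since this is the same computation already carried out in \cite{Pol22} and \cite{PolSab22}, I would cite those references for the detailed check and only indicate here the correspondence of generators and the shape of the matched relations, noting that the commutator-as-conjugation convention (writing $[x,y]=zy^{-1}$ for $xyx^{-1}=z$) is what makes the two sides line up verbatim.
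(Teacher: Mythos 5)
Your proposal is correct and follows essentially the same route as the paper, which states this proposition as a direct translation between the defining relations of a diagonal double Kodaira structure and the $4b+1$-generator presentation of $\mathsf{P}_2(\Sigma_b)$ from \cite{GG04}, deferring the detailed matching of relators to \cite{Pol22} and \cite{PolSab22}. Your observation that the kernel of $\mathsf{P}_2(\Sigma_b)\to\pi_1(\Sigma_b\times\Sigma_b)$ is normally generated by $A_{12}$ is exactly the justification the paper has in mind for the final remark.
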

A group  epimorphism $\varphi \colon \mathsf{P}_2(\Sigma_b)  \to G$ as in Proposition \ref{prop:correspondence_DDKS_covers}  will be called \emph{admissible}. If $\varphi$ is  admissible, then the  two subgroups
\begin{equation}
  \begin{split}
    K_1&:=\langle  \r_{11}, \, \t_{11}, \ldots, \r_{1b}, \,
    \t_{1b}, \; \z
    \rangle \\
    K_2&:=\langle  \r_{21}, \, \t_{21}, \ldots, \r_{2b}, \,
    \t_{2b}, \; \z	\rangle
  \end{split}
\end{equation}
are both normal in $G$, and that there are two short exact sequences
\begin{equation} \label{eq:K1K2}
  \begin{split}
    &1 \to K_1 \to G \to Q_2 \to 1 \\
    &1 \to K_2 \to G \to Q_1 \to 1,
  \end{split}
\end{equation}
such that the elements $\r_{21}, \, \t_{21}, \ldots, \r_{2b}, \, \t_{2b}$
yield a complete system of coset representatives for $Q_2$, whereas the
elements
$\r_{11}, \, \t_{11}, \ldots, \r_{1b}, \, \t_{1b}$ yield a complete system
of coset
representatives for $Q_1$.
  A diagonal double Kodaira structure on $G$ is called  \emph{strong}   if $K_1=K_2=G$.
  
 A \emph{prestructure}
  on $G$ is an ordered set of nine elements
  \begin{equation}
    (\r_{11}, \, \t_{11}, \,  \r_{12}, \, \t_{12}, \, \r_{21},
      \, \t_{21}, \,
    \r_{22}, \, \t_{22}, \, \z ),
  \end{equation}
  with $o(\z)=n \geq 2$, subject to the relations $(\mathrm{R1}), \ldots,
  (\mathrm{R10})$, $(\mathrm{T1}), \ldots, (\mathrm{T10})$ in
  \eqref{eq:ddks-genus-2}.

In other words, the nine elements must satisfy all the relations
defining a diagonal double Kodaira structure of type $(2, \, n)$, except the
surface relations. Here we  are \emph{not} requiring that the
elements of the prestructure generate  $G$. Note that if $G$ admits no prestructure, then it admits no diagonal double Kodaira structure of any type. 

\begin{remark} \label{remark:noncentral}
  Let $G$ be a finite group that admits a prestructure. Then $\z$ and
  all its conjugates are non-trivial elements of $G$ and so, from relations
  $\mathrm{(R4)}$,
  $\mathrm{(R8)}$, $\mathrm{(T2)}$, $\mathrm{(T6)}$, it
  follows that $\r_{11}, \, \r_{12}, \, \r_{21}, \, \r_{22}$ and
  $\t_{12}, \, \t_{12},
  \,	\t_{21}, \t_{22}$ are non-central elements of $G$.
\end{remark}

A key point is now provided by the following

\begin{remark}\label{rmk:cct-no-prestructure}
A CCT-group admits no prestructure, see \cite[Proposition 4.4]{PolSab22}. Thus, when looking for diagonal double Kodaira structures, we may restrict ourselves to the groups which are not CCT-groups.
\end{remark}

Let 
\begin{equation}
\S = (\r_{11}, \, \t_{11}, \ldots, \r_{1b}, \, \t_{1b}, \;
      \r_{21}, \,
    \t_{21}, \ldots, \r_{2b}, \, \t_{2b}, \; \z )
\end{equation}
be a diagonal double Kodaira structure of type $(b, \, n)$ on a finite group $G$. If $G$ is non-monolithic, we can find a non-trivial normal subgroup $N$ of $G$ such that $\z \notin N$. Thus, the ordered set
 \begin{equation}
\bar{\S} = (\bar{\r}_{11}, \, \bar{\t}_{11}, \ldots, \bar{\r}_{1b}, \, \bar{\t}_{1b}, \;
      \bar{\r}_{21}, \,
    \bar{\t}_{21}, \ldots, \bar{\r}_{2b}, \, \bar{\t}_{2b}, \; \bar{\z} )
\end{equation}
is a diagonal double Kodaira structure of type $(b, \, \bar{n})$ on the quotient group $G/N$, with $\bar{n}$ dividing $n$, and $\S$ is obtained by lifting $\bar{\S}$ via the quotient map $\pi \colon G \to G/N$. Consequently we get the following result, cf. \cite[Proposition 4.7]{PolSab22}. 

\begin{proposition} \label{prop:monolithic-argument} Let $G$ be a finite group admitting a diagonal double Kodaira structure $\S$. If $G$ is non-monolithic, then there exist a proper quotient $H$ of $G$ and a diagonal double Kodaira structure $\bar{\S}$ on $H$ such that $\S$ is obtained as a lifting of $\bar{\S}$ via the quotient homomorphism $\pi \colon G \to H$.
\end{proposition}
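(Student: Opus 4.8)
The plan is to turn the mechanism sketched in the paragraph preceding the statement into a clean three-step argument. First, recall that in any diagonal double Kodaira structure $\S$ the last entry $\z$ is a commutator of order $n \geq 2$; in particular $\z \neq 1$. Next, invoke the hypothesis that $G$ is non-monolithic, i.e. $\operatorname{mon}(G) = \{1\}$: the intersection of all non-trivial normal subgroups of $G$ is trivial. Since $\z \neq 1$, it cannot lie in that intersection, so there is a non-trivial normal subgroup $N \trianglelefteq G$ with $\z \notin N$. Put $H := G/N$; as $N \neq \{1\}$ this is a proper quotient, and write $\pi \colon G \to H$ for the canonical projection.

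The core of the proof is then to push $\S$ forward along $\pi$ and check it still qualifies. Apply $\pi$ entrywise to obtain $\bar{\S} = (\bar{\r}_{11}, \, \bar{\t}_{11}, \ldots, \bar{\r}_{2b}, \, \bar{\t}_{2b}, \, \bar{\z})$. Since $\pi$ is surjective and the entries of $\S$ generate $G$, the entries of $\bar{\S}$ generate $H$. Every relation in the definition of a diagonal double Kodaira structure — the two surface relations \eqref{eq:presentation-0} and all the conjugacy-type relations \eqref{eq:presentation-1}, \eqref{eq:presentation-3} — is an identity between two words in the generators, hence is preserved by the homomorphism $\pi$; so $\bar{\S}$ satisfies all of them. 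It remains only to control the order of $\bar{\z} = \pi(\z)$. Setting $\bar{n} := o(\bar{\z})$, we have $\bar{n} \mid n$ because $\bar{\z}$ is the image of an element of order $n$, and $\bar{n} \geq 2$ because $\z \notin N = \ker \pi$ forces $\bar{\z} \neq 1$. Hence $\bar{\S}$ is a diagonal double Kodaira structure of type $(b, \, \bar{n})$ on $H$, and by construction $\pi$ carries $\S$ onto $\bar{\S}$, so $\S$ is a lifting of $\bar{\S}$ along $\pi$, as required.

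I do not expect a genuine obstacle here: the entire content is the pair of observations that the defining relations are word identities (hence stable under taking quotients) and that non-monolithicity supplies a non-trivial normal $N$ disjoint from $\{\z\}$, which is exactly what keeps $\bar{n} \geq 2$ and prevents $\bar{\S}$ from degenerating into a ``structure of order $1$'', which is not permitted. The only points deserving a line of care are verifying that $\bar{n}$ divides $n$ and is at least $2$, and remarking — as in \cite[Proposition 4.7]{PolSab22} — that $N$ need not be chosen minimal: any non-trivial normal subgroup avoiding $\z$ works, and the construction may be iterated until the quotient becomes monolithic (or admits no structure at all).
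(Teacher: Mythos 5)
Your proof is correct and follows essentially the same route as the paper: the paper's own argument (given in the paragraph preceding the proposition) likewise picks a non-trivial normal subgroup $N$ with $\z \notin N$, pushes the structure forward along $\pi \colon G \to G/N$, and notes that the resulting $\bar{\S}$ is a structure of type $(b, \, \bar{n})$ with $\bar{n}$ dividing $n$. Your write-up merely makes explicit the points the paper leaves implicit (that $\z \neq 1$ together with $\operatorname{mon}(G)=\{1\}$ yields such an $N$, and that the defining relations, being word identities, survive the quotient), so there is nothing to correct.
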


\begin{corollary} \label{cor:monolithic argument}
Assume that $G$ admits a diagonal double Kodaira structure $\S$, whereas no proper quotient of $G$ does. Then $G$ is monolithic and $\z \in \operatorname{mon}(G)$.
\end{corollary}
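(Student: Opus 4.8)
The statement follows quickly once one combines Proposition~\ref{prop:monolithic-argument} with the explicit lifting mechanism spelled out in the paragraph that precedes it; there is essentially nothing new to compute. Here is the plan.

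First I would prove that $G$ is monolithic. Suppose it is not. Then Proposition~\ref{prop:monolithic-argument}, applied to the given diagonal double Kodaira structure $\S$, produces a proper quotient $H$ of $G$ together with a diagonal double Kodaira structure $\bar{\S}$ on $H$. This contradicts the hypothesis that no proper quotient of $G$ admits such a structure. Hence $G$ is monolithic, so $\operatorname{mon}(G) \neq \{1\}$ and, by definition, $\operatorname{mon}(G)$ is contained in every non-trivial normal subgroup of $G$.

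Next I would show $\z \in \operatorname{mon}(G)$. Put $N := \operatorname{mon}(G)$, which is a non-trivial normal subgroup of $G$ by the previous step, and let $\pi \colon G \to G/N$ be the quotient homomorphism. Arguing by contradiction, assume $\z \notin N$. Then $\bar{\z} := \pi(\z) \neq 1$, so $\bar{n} := o(\bar{\z})$ satisfies $2 \leq \bar{n}$ and $\bar{n} \mid n$. Applying $\pi$ to each entry of $\S$ and using that $\pi$ is a surjective homomorphism, the ordered set $\bar{\S}$ generates $G/N$ and satisfies the image of every defining relation \eqref{eq:presentation-0}--\eqref{eq:presentation-3}; therefore $\bar{\S}$ is a diagonal double Kodaira structure of type $(b, \, \bar{n})$ on $G/N$. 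Since $N \neq \{1\}$, the group $G/N$ is a \emph{proper} quotient of $G$, contradicting the hypothesis. Hence $\z \in N = \operatorname{mon}(G)$, which is the remaining assertion.

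I do not expect any real obstacle. The only point requiring (routine) care is that passing to a homomorphic image preserves all the relations defining a diagonal double Kodaira structure and does not increase the order of $\z$; but this is precisely the verification already carried out in the discussion leading to Proposition~\ref{prop:monolithic-argument}, so it can simply be quoted. Equivalently, one may package the whole argument into the single claim that \emph{every non-trivial normal subgroup of $G$ contains $\z$}: from this, monolithicity of $G$ follows because $\z \neq 1$ (so $\operatorname{mon}(G)$, being an intersection containing $\z$, is non-trivial), and $\z \in \operatorname{mon}(G)$ is then immediate.
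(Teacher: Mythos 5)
Your proposal is correct and matches the paper's (implicit) argument: the corollary is stated as a direct consequence of Proposition~\ref{prop:monolithic-argument} and the lifting discussion preceding it, which is exactly what you use — non-monolithicity would yield a structure on a proper quotient, and if $\z \notin \operatorname{mon}(G)$ the structure would descend to $G/\operatorname{mon}(G)$, contradicting the hypothesis in either case. Nothing is missing; your observation that the whole statement amounts to ``$\z$ lies in every non-trivial normal subgroup'' is precisely the paper's point of view.
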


The following complete description of diagonal double Kodaira structures on groups of order at most $32$ can be found in \cite[Section 4]{PolSab22}.

\begin{proposition} \label{prop:order32} 
Let  $G$ be a finite group.
\begin{itemize}
\item[$\boldsymbol{(1)}$] If $|G|<32$  then $G$ admits no prestructures and, consequently, no diagonal double Kodaira structures.   
  \item[$\boldsymbol{(2)}$] If $|G|=32$ then $G$ admits a diagonal double Kodaira
  structure if and only if $G$ is extra-special. 
  \item[$\boldsymbol{(3)}$] Both extra-special groups of order $32$
      admit $2211840=1152 \cdot
      1920$ distinct
      diagonal double Kodaira structures of type $(b, \, n)=(2, \,
      2)$. Every such a structure is strong, and moreover      
\begin{itemize}    
    \item  if $G=G(32,
      \,49)=\mathsf{H}_5(\mathbb{Z}_2)$,
      these structures form $1920$ orbits under the natural action
      of $\mathrm{Aut}(G);$
    \item  if $G=G(32,
      \,50)=\mathsf{G}_5(\mathbb{Z}_2)$,
      these structures form $1152$ orbits under the natural action
      of $\mathrm{Aut}(G).$
      \end{itemize}
  \end{itemize}
\end{proposition}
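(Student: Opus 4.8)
The plan is to reduce everything, after a short pruning of the group list, to symplectic linear algebra over $\mathbb{F}_2$ on the Frattini quotient of an extra-special group.

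\emph{Reduction to two groups, and part \textup{(1)}.} By Remark~\ref{rmk:cct-no-prestructure}, a group admitting a prestructure --- in particular one admitting a diagonal double Kodaira structure --- cannot be a CCT-group. By Proposition~\ref{prop:small-CCT}, a group of order $\le 32$ whose order is neither $24$ nor $32$ is a CCT-group: every such order is a product of at most three primes, except $16=2^4$, which is covered by Proposition~\ref{prop:small-CCT}(2). Together with the classification of non-CCT groups of order $\le 32$ recalled above, the only candidates left are $\mathsf{S}_4$ (of order $24$) and the two extra-special groups $\mathsf{H}_5(\mathbb{Z}_2)=G(32,49)$ and $\mathsf{G}_5(\mathbb{Z}_2)=G(32,50)$. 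For $\mathsf{S}_4$ I would argue directly: $[\mathsf{S}_4,\mathsf{S}_4]=\mathsf{V}_4$ forces $\z$ to be a double transposition, hence an involution with $C_{\mathsf{S}_4}(\z)\simeq\mathsf{D}_8$, and then the relations $(\mathrm{R1})$--$(\mathrm{R10})$, $(\mathrm{T1})$--$(\mathrm{T10})$ together with Remark~\ref{remark:noncentral} (all the $\r_{ij},\t_{ij}$ are non-trivial) yield a contradiction after a short case check on the conjugacy classes the $\r_{ij},\t_{ij}$ may lie in. Thus $\mathsf{S}_4$ admits no prestructure; this proves \textup{(1)} and reduces \textup{(2)}--\textup{(3)} to the two extra-special groups.

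\emph{Linearisation, and part \textup{(2)}.} Fix an extra-special group $G$ of order $32$, so $[G,G]=Z(G)=\Phi(G)\simeq\mathbb{Z}_2$. Since $\z$ is a commutator of order $\ge 2$, it is necessarily the unique central involution, whence $n=2$ and $\z$ is central. Put $V:=G/Z(G)\simeq\mathbb{F}_2^4$ with the non-degenerate symplectic form $\langle\bar x,\bar y\rangle\in\mathbb{F}_2$ defined by $[x,y]=\z^{\langle\bar x,\bar y\rangle}$. Centrality of $\z$ makes $(\mathrm{R5}),(\mathrm{R10}),(\mathrm{T5}),(\mathrm{T10})$ vacuous and turns $(\mathrm{R7}),(\mathrm{R9}),(\mathrm{T4}),(\mathrm{T7}),(\mathrm{T8}),(\mathrm{T9})$ into plain commutation relations. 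Writing $u_1,\dots,u_4$ for the images in $V$ of $\r_{11},\t_{11},\r_{12},\t_{12}$ and $w_1,\dots,w_4$ for those of $\r_{21},\t_{21},\r_{22},\t_{22}$, one checks that $(\mathrm{R1})$--$(\mathrm{R10})$, $(\mathrm{T1})$--$(\mathrm{T10})$ are equivalent to the single identity $\bigl(\langle u_i,w_j\rangle\bigr)_{i,j}=M$, where $M$ is the permutation matrix of $(1\,2)(3\,4)$. Since $M$ is invertible over $\mathbb{F}_2$ and the form is non-degenerate, this forces $(u_1,\dots,u_4)$ and $(w_1,\dots,w_4)$ to each be a basis of $V$, the latter uniquely determined by the former via $w\mapsto\langle-,w\rangle$. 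As $\Phi(G)=Z(G)=\langle\z\rangle$, any solution therefore yields a generating $9$-tuple --- so structures exist, giving the ``if'' part of \textup{(2)} --- and automatically $K_1=\langle\r_{11},\t_{11},\r_{12},\t_{12},\z\rangle=G=K_2$, i.e.\ every such structure is strong.

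\emph{The count, and part \textup{(3)}.} It remains to impose the surface relations. With $\z$ central, $(\mathrm{S1})$ and $(\mathrm{S2})$ reduce to $\langle u_1,u_2\rangle+\langle u_3,u_4\rangle=1$ and $\langle w_1,w_2\rangle+\langle w_3,w_4\rangle=1$. Writing $J$ for the Gram matrix of the basis $(u_1,\dots,u_4)$, the first says $J_{12}+J_{34}=1$; a short computation with the $4\times4$ Pfaffian adjugate (using $\mathrm{Pf}(J)=1$) shows that the Gram matrix of $(w_1,\dots,w_4)$ equals $M^{\mathsf{T}}J^{-1}M$ and that $(\mathrm{S2})$ reads $(J^{-1})_{12}+(J^{-1})_{34}=J_{34}+J_{12}$, so $(\mathrm{S1})$ and $(\mathrm{S2})$ are the \emph{same} condition. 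Exactly $12$ of the $28$ non-degenerate alternating forms on $\mathbb{F}_2^4$ satisfy $J_{12}+J_{34}=1$, and each is the Gram matrix of $|\mathsf{Sp}(4,\mathbb{F}_2)|=720$ bases of $V$; since each basis vector lifts to $G$ in $2$ ways and $\z$ is uniquely determined, the number of structures is $2^8\cdot720\cdot12=2211840=1152\cdot1920$. Nothing above used the squaring map $x\mapsto x^2$ (equivalently the Arf invariant of the quadratic refinement of $\langle-,-\rangle$), so the count is the same for $\mathsf{H}_5(\mathbb{Z}_2)$ and $\mathsf{G}_5(\mathbb{Z}_2)$. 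Finally, $\mathrm{Aut}(G)$ acts freely on the set of structures, since an automorphism fixing a generating set is the identity; hence the number of orbits is $2211840/|\mathrm{Aut}(G)|$. From $\mathrm{Inn}(G)\simeq\mathbb{Z}_2^4$ and $\mathrm{Out}(G)\simeq\mathrm{O}^{\epsilon}(4,\mathbb{F}_2)$, with $\epsilon=+$ for $\mathsf{H}_5(\mathbb{Z}_2)$ and $\epsilon=-$ for $\mathsf{G}_5(\mathbb{Z}_2)$, one gets $|\mathrm{Aut}(\mathsf{H}_5(\mathbb{Z}_2))|=16\cdot72=1152$ and $|\mathrm{Aut}(\mathsf{G}_5(\mathbb{Z}_2))|=16\cdot120=1920$, hence $1920$ and $1152$ $\mathrm{Aut}$-orbits respectively. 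The hard part will be bookkeeping rather than ideas: verifying that, once $\z$ is central, the twenty-two relations truly collapse to ``the pairing matrix is $M$, plus a single scalar condition'', and pushing the Pfaffian identity $(\mathrm{S1})\Leftrightarrow(\mathrm{S2})$ through without indexing or sign slips; the remaining ingredients --- the CCT-reduction, the freeness of the $\mathrm{Aut}(G)$-action, and the orders of the $\mathbb{F}_2$-orthogonal groups --- are routine.
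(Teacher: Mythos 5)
Your treatment of the two extra-special groups is correct, and it is a genuinely different route from the paper's: the paper gives no argument here at all, but simply recalls the result from \cite[Section 4]{PolSab22}, where the enumeration is ultimately computational, whereas you reduce everything to symplectic linear algebra on $G/Z(G)\simeq\mathbb{F}_2^4$. I checked the key steps: with $\mathsf{z}$ central of order $2$ the twenty relations do collapse to the pairing matrix $M$ of $(1\,2)(3\,4)$; the two surface relations reduce to $J_{12}+J_{34}=1$ and $(J^{-1})_{12}+(J^{-1})_{34}=1$, and the Pfaffian--adjugate identity $(J^{-1})_{12}=J_{34}$, $(J^{-1})_{34}=J_{12}$ (valid since $\operatorname{Pf}(J)=1$) makes them equivalent; exactly $12$ of the $28$ invertible alternating matrices satisfy the condition, each realized by $|\mathsf{Sp}(4,\mathbb{F}_2)|=720$ ordered bases, and the lifts contribute $2^8$, giving $12\cdot 720\cdot 256=2211840$; generation and strongness are automatic because the $u_i$ form a basis and $\mathsf{z}\in\Phi(G)$; and $\operatorname{Aut}(G)$ acts freely on generating tuples, so the orbit counts follow from $|\operatorname{Aut}(G)|=16\cdot 72$ resp.\ $16\cdot 120$. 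This buys a conceptual, computer-free proof of parts $\boldsymbol{(2)}$--$\boldsymbol{(3)}$, at the modest price of invoking Winter's theorem $\operatorname{Out}(G)\simeq \mathrm{O}^{\pm}(4,\mathbb{F}_2)$ and the identification of $G(32,49)=\mathsf{H}_5(\mathbb{Z}_2)$ as the plus type, which you assert but do not verify (it does follow from the Arf invariant of the squaring form, and it is what makes the orbit numbers come out as $1920$ and $1152$ rather than the other way around).

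There is, however, a genuine gap in your part $\boldsymbol{(1)}$, in the exclusion of $\mathsf{S}_4$. You write $[\mathsf{S}_4,\mathsf{S}_4]=\mathsf{V}_4$, but the commutator subgroup of $\mathsf{S}_4$ is $\mathsf{A}_4$, and indeed every element of $\mathsf{A}_4$ is a commutator (e.g.\ $[(1\,2),(2\,3)]$ is a $3$-cycle). So your premise that $\mathsf{z}$ must be a double transposition is unjustified: a priori $\mathsf{z}$ could have order $3$, and the announced ``short case check'' is never carried out, so the only non-CCT group of order $<32$ is not actually eliminated. Since the CCT reduction leaves exactly $\mathsf{S}_4$ to deal with, part $\boldsymbol{(1)}$ (and with it the clean reduction of $\boldsymbol{(2)}$ to the extra-special case via ``no prestructure below order $32$'') is incomplete as written. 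To repair it you must run the case analysis for both possible orders of $\mathsf{z}$ (or argue, e.g.\ via relations $(\mathrm{R4})$, $(\mathrm{R8})$, $(\mathrm{T2})$, $(\mathrm{T6})$ and the structure of centralizers in $\mathsf{S}_4$, that no choice of non-central $\r_{ij},\t_{ij}$ is compatible with them), or simply cite \cite[Section 4]{PolSab22} for the non-existence of prestructures on $\mathsf{S}_4$.
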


We can now exclude all the remaining groups of order less than $64$.

 \begin{proposition} \label{prop: no_ddks_less_63}
No finite group $G$ with  $33 \leq |G| \leq 63$  admits diagonal double Kodaira structures.
\end{proposition}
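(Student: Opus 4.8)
The plan is to combine the two structural restrictions established earlier --- being non-CCT (Remark \ref{rmk:cct-no-prestructure}) and, when non-monolithic, admitting a structure only as a lifting from a proper quotient (Proposition \ref{prop:monolithic-argument}) --- with the classification of small non-CCT groups from Propositions \ref{prop:36-non-cct}--\ref{prop:60-non-cct} and Proposition \ref{prop:non-CCT_non-mono}. First I would recall that by Remark \ref{rmk:cct-no-prestructure} any group admitting a diagonal double Kodaira structure is non-CCT, so by Proposition \ref{prop:CCT-order-less-64} it suffices to examine groups of order in $\{36,40,48,54,56,60\}$, and in fact only those that are genuinely non-CCT. By Propositions \ref{prop:40-non-cct} and \ref{prop:56-non-cct} orders $40$ and $56$ are immediately eliminated, leaving the fifteen non-CCT groups listed in the proof of Proposition \ref{prop:non-CCT_non-mono}.

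Next I would invoke Proposition \ref{prop:monolithic-argument}: if a non-monolithic group $G$ with $|G|\le 63$ admitted a structure, that structure would descend to a proper quotient $H$ of $G$; since $|H|<|G|\le 63$, Proposition \ref{prop:order32} (and induction, or simply the fact that all proper quotients here have order $\le 31$, or order $32$ but with $H$ then necessarily extra-special) would force $H$ to be one of the two extra-special groups of order $32$. One then checks that none of the thirteen non-CCT, non-monolithic groups on the list has an extra-special quotient of order $32$: indeed, such a quotient would require $|G|/|H| = |G|/32$ to be an integer, which already rules out all of them except the order-$64$ case we are not considering here. (For $|G|\in\{36,40,48,54,56,60\}$, $32 \nmid |G|$, so no quotient of order $32$ can exist at all.) Hence no non-monolithic group in the range admits a structure.

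It remains to treat the monolithic non-CCT groups of order $\le 63$, which by Proposition \ref{prop:non-CCT_non-mono} are exactly $G(54,\,5)$ and $G(54,\,6)$ (the groups $\mathsf{S}_4$, $\mathsf{H}_5(\mathbb{Z}_2)$, $\mathsf{G}_5(\mathbb{Z}_2)$ having order $\le 32$ and already handled by Proposition \ref{prop:order32}). For these two remaining groups I would argue directly: by Corollary \ref{cor:monolithic argument}, if either admitted a structure with no structure on a proper quotient, then $\z\in\operatorname{mon}(G)$; and since $\operatorname{mon}(G)\simeq\mathbb{Z}_3$ in both cases (as computed in the proof of Proposition \ref{prop:non-CCT_non-mono}), $\z$ would be a central element of order $3$. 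But Remark \ref{remark:noncentral} shows $\z$ need not be central, yet relations $(\mathrm{R5})$, $(\mathrm{R10})$, $(\mathrm{T5})$, $(\mathrm{T10})$ combined with centrality of $\z$ would force $[\r_{11},\z]=[\r_{21}^{-1},\z]=1$ and similarly for all generators, i.e.\ $\z$ central is consistent, so one must instead rule out a prestructure by hand --- examining the conjugacy relations $(\mathrm{R4})$, $(\mathrm{R8})$, $(\mathrm{T2})$, $(\mathrm{T6})$ which say $[\r_{11},\t_{21}]=\z^{-1}$, etc., and checking against the (given) presentations of $G(54,5)$ and $G(54,6)$ that no nine elements satisfy them. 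This last verification --- a finite but slightly intricate hand-check in two specific groups of order $54$ --- is the main obstacle; the authors remark in the introduction that these are precisely the two groups below order $64$ for which they resorted to the computer, so I would expect the honest route to be a short \verb|GAP4| computation confirming that \verb|CheckStructures| returns no prestructure for $G(54,5)$ and $G(54,6)$.
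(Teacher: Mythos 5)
Your proposal follows essentially the same route as the paper: reduce to non-CCT groups, use the proper-quotient argument (all proper quotients have order at most $31$, hence admit no structures by Proposition \ref{prop:order32}) to dispose of the non-monolithic cases, reduce via Proposition \ref{prop:non-CCT_non-mono} to $G(54,\,5)$ and $G(54,\,6)$, and settle those two by the \verb|GAP4| computation --- which is exactly what the paper does. One caveat: your side claim that $\z \in \operatorname{mon}(G) \simeq \mathbb{Z}_3$ ``would be a central element of order $3$'' is false, since both $G(54,\,5)$ and $G(54,\,6)$ have trivial center (the generator of $\operatorname{mon}(G)$ is exhibited as a non-central element in the proof of Proposition \ref{prop:54-non-cct}, and Proposition \ref{prop:Mon-if-center-non-trivial} only applies when $Z(G) \neq \{1\}$); this error is not load-bearing, because you discard that line and fall back on the computational check, but the ensuing discussion of relations $(\mathrm{R5})$, $(\mathrm{T5})$, etc.\ under an assumed centrality of $\z$ should simply be deleted.
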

\begin{proof}
If $33 \leq |G| \leq 63$ then $G$ cannot have quotients of order $32$ and so, by Corollary \ref{cor:monolithic argument} and Proposition \ref{prop:order32}, if $G$ admits a diagonal double Kodaira structure then $G$ is monolithic and $\z \in \operatorname{mon}(G)$. Now, looking at Remark \ref{rmk:cct-no-prestructure} and Proposition \ref{prop:non-CCT_non-mono}, we see that is sufficient to check that
$G(54, \, 5)$ and $G(54, \, 6)$ have no diagonal double Kodaira structures.  This can be done using Part 1 e Part 2 of the \verb|GAP4| script in the GitHub repository  \cite{GAP4DDKS}. Specifically, the script work as follows. First, Part 1
 creates a \verb|GAP4| instruction called
\verb|CheckStructures|, which provides a complete list of double Kodaira
structures of a given finite group
$G$ in the \verb|GAP4| database, modulo the action of $\operatorname{Aut}(G)$. Then,  Part 2 applies the instruction to $G(54, \, 5)$ and $G(54, \, 6)$.
\end{proof}

If $|G|=64$ then the conclusion of Proposition \ref{prop: no_ddks_less_63} does not hold anymore, but we have the following alternatives.

\begin{proposition} \label{prop:order-64}
Let $G$ be a finite group with $|G|=64$ and admitting a diagonal double Kodaira structure $\S$. 
\begin{itemize}
\item[$\boldsymbol{(1)}$] If $G$ is non-monolithic, then there exist an extra-special quotient $H$ of order $32$ and a diagonal double Kodaira structure $\bar{\S}$ on $H$ such that $\S$ is obtained as a lifting of $\bar{\S}$ via the quotient homomorphism $\pi \colon G \to H$.
\item[$\boldsymbol{(2)}$] If we are in situation $\boldsymbol{(1)}$ and moreover $[G, \, G] \simeq \mathbb{Z}_2$, then there exists an extra-special quotient $H$ of order $32$ such that \emph{every} diagonal double Kodaira structure on $G$ is a  lifting of a structure  on $H$. 
\item[$\boldsymbol{(3)}$] If $G$ does not have extra-special quotients of order $32$, then $G$ is monolithic and $\z \in \operatorname{mon}(G) \simeq \mathbb{Z}_2$.
\end{itemize}
\end{proposition}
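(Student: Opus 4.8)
The plan is to derive all three parts from the structural results already in place — Proposition \ref{prop:monolithic-argument}, Corollary \ref{cor:monolithic argument}, and the classification Proposition \ref{prop:order32} — supplemented only by the elementary fact that a minimal normal subgroup of a $p$-group is central of order $p$. The recurring observation is that a proper quotient of a group of order $64$ has order a proper divisor of $64$, so if it admits a diagonal double Kodaira structure then Proposition \ref{prop:order32}(1) forces its order to be $32$ and Proposition \ref{prop:order32}(2) forces it to be extra-special; in other words, for $|G|=64$, ``$G$ has a proper quotient carrying a structure'' and ``$G$ has an extra-special quotient of order $32$'' are the same condition.

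Given this, Part $\boldsymbol{(1)}$ is immediate: since $G$ is non-monolithic, Proposition \ref{prop:monolithic-argument} exhibits $\S$ as the lift of a structure $\bar{\S}$ on a proper quotient $H$ of $G$, and the size analysis above forces $H$ to be extra-special of order $32$. Part $\boldsymbol{(3)}$ is also immediate: the hypothesis that $G$ has no extra-special quotient of order $32$ is, by the same analysis, exactly the hypothesis of Corollary \ref{cor:monolithic argument}, which yields that $G$ is monolithic with $\z\in\operatorname{mon}(G)$, and Corollary \ref{cor:monolithic-p-group} upgrades this to $\operatorname{mon}(G)\simeq\mathbb{Z}_2$ because $G$ is a $2$-group.

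The real content is Part $\boldsymbol{(2)}$, and I would argue it as follows. First, reading the first surface relation in \eqref{eq:presentation-0} (equivalently $\mathbf{(S1)}$ in \eqref{eq:ddks-genus-2}) modulo $[G,\,G]$: the commutators and the telescoping product of the $\t_{1j}$ both vanish in $G^{\mathrm{ab}}$, so the distinguished element of \emph{any} structure on $G$ lies in $[G,\,G]$. Hence, when $[G,\,G]\simeq\mathbb{Z}_2$, every structure on $G$ has one and the same distinguished element $\z$, namely the generator of $[G,\,G]$, and in particular $n=2$ always. Next, $\langle\z\rangle=[G,\,G]$ is a minimal normal subgroup of the $2$-group $G$; since $G$ is non-monolithic, $\operatorname{mon}(G)=\{1\}$, so there is a second minimal normal subgroup $N\neq\langle\z\rangle$, necessarily of order $2$, whence $N\cap\langle\z\rangle=\{1\}$ and $\z\notin N$. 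Put $H:=G/N$, a group of order $32$. For every structure $\S'$ on $G$ its distinguished element is $\z\notin N$, so its image $\bar{\S}'$ is a structure on $H$ and $\S'$ is obtained as a lift of $\bar{\S}'$ along $G\to H$; in particular $H$ carries a structure, hence is extra-special by Proposition \ref{prop:order32}, and this single quotient $H$ works for \emph{all} structures, as required.

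The hard part — the only step that is not pure bookkeeping — is precisely this point in Part $\boldsymbol{(2)}$: the condition $[G,\,G]\simeq\mathbb{Z}_2$ pins down $\z$ once and for all, which is what turns the mere \emph{existence} of a suitable quotient (Part $\boldsymbol{(1)}$) into the \emph{uniform} choice of a fixed quotient valid for every structure. After that insight, one only needs non-monolithicity of the $2$-group $G$ to manufacture a minimal normal subgroup of order $2$ transverse to $\langle\z\rangle$, which is routine.
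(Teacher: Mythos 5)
Your proof is correct and follows essentially the same route as the paper: parts (1) and (3) are read off from Proposition \ref{prop:monolithic-argument}, Proposition \ref{prop:order32} and Corollaries \ref{cor:monolithic argument}, \ref{cor:monolithic-p-group}, and part (2) rests on the observation that $[G,\,G]\simeq\mathbb{Z}_2$ forces $\z$ to be its generator for every structure, so a single quotient $H=G/N$ with $\z\notin N$ works uniformly. You merely make explicit some details the paper leaves implicit (that $\z\in[G,\,G]$ follows from the surface relation in the abelianization, and that $N$ can be taken to be a second minimal normal subgroup of order $2$, so $|H|=32$ and $H$ is extra-special by Proposition \ref{prop:order32}).
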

\begin{proof}
Part $\boldsymbol{(1)}$ follows directly from Propositions \ref{prop:monolithic-argument} and \ref{prop:order32}. Furthermore, if $[G, \, G]$ has order $2$, then its generator provides the element $\z$ for \emph{every} diagonal double Kodaira structure on $G$; taking $H=G/N$, where $N$ is a non-trivial normal subgroup of $G$ such that $\z \notin N$, we get $\boldsymbol{(2)}$. Finally, part  $\boldsymbol{(3)}$ is a consequence of Corollaries \ref{cor:monolithic argument} and \ref{cor:monolithic-p-group}.  
\end{proof}

\begin{remark} \label{rem:H-not-unique}
The ``universal" extra-special subgroup $H$ in part $\boldsymbol{(2)}$ of Proposition \ref{prop:order-64} needs not be unique. In fact, the diagonal double Kodaira structures on $G=G(64, 201)$ can be lifted from both extra-special groups of order $32$, see Subsection \ref{subsec:64_201}.
\end{remark}

We end this section by recalling  how existence of  diagonal double
Kodaira structures is equivalent to the existence of some special
double Kodaira fibrations (see the Introduction for the definition), that we call \emph{diagonal double Kodaira fibrations}. We closely follow the treatment given in \cite[Section 5]{PolSab22}. With a slight abuse of notation, in the sequel we will use the symbol $\Sigma_b$ to indicate both a closed Riemann surface of genus $b$ and its
underlying real surface. By Grauert-Remmert's extension theorem and Serre's
GAGA, the group epimorphism $\varphi \colon \mathsf{P}_2(\Sigma_b) \to G$ 
induced by a diagonal double Kodaira structure  yields the existence of a smooth,
complex, projective surface $S$ endowed with a Galois cover
\begin{equation}
  \mathbf{f} \colon S \to \Sigma_b \times \Sigma_b,
\end{equation}
with Galois group $G$ and branched precisely over $\Delta$ with branching
order $n$, see \cite[Proposition 3.4]{CaPol19}. 

The braid group $\mathsf{P}_2(\Sigma_b)$ is the middle term of two split
short exact sequences
\begin{equation} \label{eq:oggi}
  1\to \pi_1(\Sigma_b \setminus \{p_i\}, \, p_j) \to \mathsf{P}_2(\Sigma_b) \to
  \pi_1(\Sigma_b, \, p_i) \to 1,
\end{equation}
where $\{i, \, j\} = \{1, \, 2\}$, induced by the two natural projections
of pointed topological spaces
\begin{equation} \label{eq:natural-proj}
  (\Sigma_b \times \Sigma_b \setminus \Delta, \,
  (p_1, \, p_2)) \to (\Sigma_b, \, p_i),
\end{equation}
see \cite[Theorem 1]{GG04}. Composing the left
homomorphisms
in \eqref{eq:oggi} with $\varphi \colon \mathsf{P}_2(\Sigma_b) \to G$,
we get two homomorphisms
\begin{equation} \label{eq:varphi-i}
  \varphi_1 \colon \pi_1(\Sigma_b -\{p_2\}, \, p_1) \to G, \quad
  \varphi_2
  \colon \pi_1(\Sigma_b -\{p_1\}, \, p_2) \to G,
\end{equation}
whose respective images coincide with the normal subgroups $K_1$ and $K_2$ defined
in \eqref{eq:K1K2}. By construction, these are
the homomorphisms induced by the restrictions $\mathbf{f}_i \colon \Gamma_i
\to \Sigma_b$  of the Galois cover $\mathbf{f} \colon S \to \Sigma_b
\times \Sigma_b$ to the fibres of the two natural projections $\pi_i
\colon \Sigma_b \times \Sigma_b \to \Sigma_b$. Since $\Delta$ intersects
transversally at a single point all the fibres of the natural projections,
it follows that both such restrictions are branched at precisely one point,
and the number of connected components of the smooth curve $\Gamma_i \subset
S$ equals the index $m_i:=[G : K_i]$ of the normal subgroup $K_i$ in $G$.

So, taking the Stein factorizations of the compositions $\pi_i \circ
\mathbf{f} \colon S \to \Sigma_b$, namely 
\begin{equation} \label{dia:Stein-Kodaira-gi}
  \begin{tikzcd}
    S \ar{rr}{\pi_i \circ \mathbf{f}}  \ar{dr}{f_i} & &
    \Sigma_b	\\
    & \Sigma_{b_i} \ar{ur}{\theta_i} &
  \end{tikzcd}
\end{equation}
where $\theta_i \colon \Sigma_{b_1} \to \Sigma_b$ is \'{e}tale of degree $b_i$, we obtain two distinct Kodaira fibrations $f_i \colon S \to \Sigma_{b_i}$,
hence a double Kodaira fibration by considering the product morphism
\begin{equation}
  f=f_1 \times f_2 \colon S \to \Sigma_{b_1} \times \Sigma_{b_2}.
\end{equation}
\begin{definition} \label{def:diagonal-double-kodaira-fibration}
  We call $f \colon S \to \Sigma_{b_1} \times \Sigma_{b_2}$ the
  \emph{diagonal
  double Kodaira fibration} associated with the diagonal double Kodaira
  structure $\S$ on the finite group $G$. Conversely, we will say that a
  double Kodaira fibration $f \colon S \to \Sigma_{b_1} \times
  \Sigma_{b_2}$
  is \emph{of diagonal type} $(b, \, n)$ if there exists a finite
  group $G$
  and a diagonal double Kodaira structure $\S$ of type $(b, \, n)$
  on it such
  that $f$ is associated with $\S$.
\end{definition}

\medskip

Since the morphism $\theta_i \colon \Sigma_{b_i} \to \Sigma_b$ is \'{e}tale
of degree $m_i$, by using the Hurwitz formula we obtain
\begin{equation} \label{eq:expression-gi}
  b_1 -1 =m_1(b-1), \quad  b_2 -1 =m_2(b-1).
\end{equation}
Moreover, the fibre genera $g_1$, $g_2$ of the Kodaira fibrations $f_1
\colon S \to \Sigma_{b_1}$, $f_2 \colon S \to \Sigma_{b_2}$ are computed
by the formulae
\begin{equation} \label{eq:expression-gFi}
  2g_1-2 = \frac{|G|}{m_1} (2b-2 + \mathfrak{n} ), \quad 2g_2-2 =
  \frac{|G|}{m_2} \left( 2b-2 + \mathfrak{n} \right),
\end{equation}
where $\mathfrak{n}:= 1 - 1/n$. Finally, the surface $S$ fits into a diagram
\begin{equation} \label{dia:degree-f-general}
  \begin{tikzcd}
    S \ar{rr}{\mathbf{f}}  \ar{dr}{f} & & \Sigma_b \times
    \Sigma_b  \\
    & \Sigma_{b_1} \times \Sigma_{b_2} \ar[ur, "\theta_1 \times
    \theta_2"{sloped, anchor=south}] &
  \end{tikzcd}
\end{equation}
so that the diagonal double Kodaira fibration $f \colon S \to  \Sigma_{b_1}
\times \Sigma_{b_2}$ is a finite cover of degree $\frac{|G|}{m_1m_2}$,
branched precisely over the curve
\begin{equation} \label{eq:branching-f}
  (\theta_1 \times \theta_2)^{-1}(\Delta)=\Sigma_{b_1} \times_{\Sigma_b}
  \Sigma_{b_2}.
\end{equation}
Such a curve is always smooth, being the preimage of a smooth divisor via an
\'{e}tale morphism. However, it is reducible in general, see \cite[Proposition
3.11]{CaPol19}. The invariants of $S$ can be now computed as follows,
see \cite[Proposition 3.8]{CaPol19}.

\begin{proposition} \label{prop:invariant-S-G}
  Let $f \colon S \to \Sigma_{b_1} \times \Sigma_{b_2}$ be a diagonal
  double
  Kodaira fibration, associated with a diagonal double Kodaira
  structure $\S$
  of type $(b, \, n)$ on a finite group $G$. Then we have
  \begin{equation} \label{eq:invariants-S-G}
    \begin{split}
      c_1^2(S) & = |G|\,(2b-2) ( 4b-4 + 4 \mathfrak{n}
      - \mathfrak{n}^2 ) \\
      c_2(S) & =   |G|\,(2b-2) (2b-2 + \mathfrak{n})
    \end{split}
  \end{equation}
  where $\mathfrak{n}=1-1/n$.	 Therefore the signature
  of $S$ can be
  expressed as
  \begin{equation} \label{eq:slope-signature-S-G}
    \sigma(S) = \frac{1}{3}\left(c_1^2(S) - 2 c_2(S)
      \right)
      =\frac{1}{3}\,|G|\,(2b-2)\left(1-\frac{1}{n^2}\right).
\end{equation}
\end{proposition}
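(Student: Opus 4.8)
The plan is to realize $S$ as a degree-$|G|$ Galois cover $\mathbf{f}\colon S \to Y := \Sigma_b \times \Sigma_b$ branched exactly over the smooth divisor $\Delta$ with branching order $n$, and to extract its Chern numbers from the standard formulas for branched covers; throughout I write $\mathfrak{n} = 1-1/n$ as in the statement. Let $\widetilde{\Delta} := \mathbf{f}^{-1}(\Delta)_{\mathrm{red}}$ be the reduced preimage of the branch locus. Since all the ramification indices over $\Delta$ equal $n$, one has $\mathbf{f}^*\Delta = n\,\widetilde{\Delta}$, the cover is étale away from $\widetilde{\Delta}$, and the ramification formula for surfaces gives
\begin{equation}
K_S \;=\; \mathbf{f}^*K_Y + (n-1)\widetilde{\Delta} \;=\; \mathbf{f}^*K_Y + \tfrac{n-1}{n}\,\mathbf{f}^*\Delta \;=\; \mathbf{f}^*\bigl(K_Y + \mathfrak{n}\,\Delta\bigr).
\end{equation}
By the projection formula, $c_1^2(S) = K_S^2 = |G|\,(K_Y + \mathfrak{n}\,\Delta)^2$, so it remains to compute the self-intersection of $K_Y + \mathfrak{n}\,\Delta$ on $Y$.

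First I would record the three relevant intersection numbers. Writing $K := K_{\Sigma_b}$, so that $\deg K = 2b-2$, and letting $\pi_1,\pi_2$ be the two projections, we have $K_Y = \pi_1^*K + \pi_2^*K$, hence $K_Y^2 = 2\,\pi_1^*K\cdot\pi_2^*K = 2(2b-2)^2$, since $(\pi_i^*K)^2 = 0$ (being pulled back from a curve). Because each $\pi_i$ restricts to an isomorphism $\Delta \xrightarrow{\sim} \Sigma_b$, we get $\pi_i^*K\cdot\Delta = 2b-2$, whence $K_Y\cdot\Delta = 2(2b-2)$; and the self-intersection formula for the diagonal gives $\Delta^2 = \deg T_{\Sigma_b} = 2-2b$. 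Expanding the square,
\begin{equation}
(K_Y + \mathfrak{n}\,\Delta)^2 = 2(2b-2)^2 + 4\mathfrak{n}(2b-2) - \mathfrak{n}^2(2b-2) = (2b-2)\bigl(4b-4 + 4\mathfrak{n} - \mathfrak{n}^2\bigr),
\end{equation}
which yields the asserted value of $c_1^2(S)$.

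Next I would compute $c_2(S) = e(S)$ by additivity of the topological Euler characteristic along the natural stratification. The restriction $\mathbf{f}\colon S\setminus\widetilde{\Delta}\to Y\setminus\Delta$ is an unramified $|G|$-fold cover, while $\widetilde{\Delta}\to\Delta$ is étale of degree $|G|/n$; using $e(Y) = e(\Sigma_b)^2 = (2b-2)^2$ and $e(\Delta) = e(\Sigma_b) = 2-2b$, we obtain
\begin{equation}
e(S) = |G|\bigl(e(Y) - e(\Delta)\bigr) + \tfrac{|G|}{n}\,e(\Delta) = |G|\,e(Y) - |G|\,\mathfrak{n}\,e(\Delta) = |G|(2b-2)\bigl(2b-2 + \mathfrak{n}\bigr).
\end{equation}
Finally, by the Hirzebruch index theorem $\sigma(S) = \tfrac{1}{3}\bigl(c_1^2(S) - 2c_2(S)\bigr)$; combining the two formulas just obtained and factoring out $|G|(2b-2)$, the residual bracket is
\begin{equation}
(4b-4+4\mathfrak{n}-\mathfrak{n}^2) - 2(2b-2+\mathfrak{n}) = 2\mathfrak{n}-\mathfrak{n}^2 = \mathfrak{n}(2-\mathfrak{n}) = \Bigl(1-\tfrac{1}{n}\Bigr)\Bigl(1+\tfrac{1}{n}\Bigr) = 1-\tfrac{1}{n^2},
\end{equation}
giving the claimed value of $\sigma(S)$. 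I do not expect any genuine obstacle: the only delicate point is the canonical bundle formula for the branched cover, and in particular the appearance of the coefficient $\mathfrak{n} = 1-1/n$ rather than $n-1$, which rests on the identity $\mathbf{f}^*\Delta = n\,\widetilde{\Delta}$ and is precisely the mechanism by which the branching order $n$ enters all three invariants.
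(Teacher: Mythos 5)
Your proof is correct, and it is essentially the standard argument: the paper itself gives no in-text proof but defers to \cite[Proposition 3.8]{CaPol19}, where the invariants are obtained in the same way, via the pullback formula $K_S=\mathbf{f}^*(K_{\Sigma_b\times\Sigma_b}+\mathfrak{n}\,\Delta)$ together with additivity of the topological Euler number over the stratification by the branch locus. Your intersection numbers on $\Sigma_b\times\Sigma_b$, the degree $|G|/n$ of $\widetilde{\Delta}\to\Delta$, and the final simplification $\mathfrak{n}(2-\mathfrak{n})=1-1/n^2$ all check out, so there is nothing to add.
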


\begin{remark} \label{rmk:sdks-characterization}
  By definition, the diagonal double Kodaira structure $\S$ is strong
  if and only if $m_1=m_2=1$, that in turn implies $b_1=b_2=b$,
  i.e.,
  $f=\mathbf{f}$. In other words, $\S$ is strong if and only
  if no Stein
  factorization as in \eqref{dia:Stein-Kodaira-gi} is needed or,
  equivalently,
  if and only if the Galois cover $\mathbf{f}\colon S \to \Sigma_b
  \times
  \Sigma_b$ induced by the admissible epimorphism $\varphi \colon \mathsf{P}_2(\Sigma_b)  \to G$ is already a double Kodaira fibration, branched on the diagonal $\Delta \subset \Sigma_b \times \Sigma_b$.
\end{remark}

\begin{remark} \label{rmk:factorization_of_the_cover}
Proposition \ref{prop:monolithic-argument} can be rephrased in geometric terms as follows: if $G$ is non-monolithic, there exists a quotient $H$ of $G$ and a factorization of the $G$-cover $\mathbf{f}$ of the form
\begin{equation} 
 \begin{tikzcd}
    S \ar{rr}{\mathbf{f}}  \ar{dr}{ } & & \Sigma_b \times
    \Sigma_b  \\
    & \bar{S} \ar[ur, "\bar{\mathbf{f}}"{sloped, anchor=south}] &
  \end{tikzcd}
\end{equation} 
  where $\bar{\mathbf{f}} \colon \bar{S} \to \Sigma_b \times
    \Sigma_b$ is a $H$-cover branched on the diagonal $\Delta \subset \Sigma_b \times \Sigma_b$ with branching order $\bar{n} \geq 2$, where $\bar{n}$ divides $n$.
\end{remark}

\section{Diagonal double Kodaira structures on groups of order
\texorpdfstring{$64$}{64}} \label{sec:order_64}

All the following claims can be verified by using Part 3 of the \verb|GAP4| script in the repository  \cite{GAP4DDKS}. There exist precisely $124$ non-abelian, non-CCT groups: $37$ are monolithic 
and $87$ are non-monolithic. By Proposition \ref{prop:order-64}, non-monolithic
groups of order $64$ which do not have extra-special quotients of order $32$ 
cannot admit prestructures, so they can be discarded. There are $56$ of them, so we are left with $37$ 
monolithic groups and $31$ non-monolithic groups, which are listed in the 
following
\begin{proposition} \label{prop:remaining-order-64} 
Let $G$ be a non-abelian, non-\emph{CCT} group of order $64$. 
\begin{itemize}
\item[$\boldsymbol{(1)}$] If $G$ is monolithic, then $G$ is of the form $G(64, \, t)$ with $t$ in the set
\begin{equation} \label{eq:case_64_monolithic}
\begin{split}
\{ & 18, \, 19,\, 25,\, 28,\, 30,\, 32, \,33, \,34,\, 35, \,36,\, 37,\, 41,\, 42,\, 43,\, 46,
\, 91,\, 94, \,102,\, 111, \,125,\, 134,\, \\
   & 135,\, 136,\, 137,\, 138,\, 139, \,152, \,153,\, 154,\, 190,\, 191, \, 249, \,256, \, 257, \, 258,\, 259,\, 266\}.
\end{split}
\end{equation}
\item[$\boldsymbol{(2)}$] If  $G$ is non-monolithic and admits an extra-special quotient of order $32$, then $G$ is of the form $G(64, \, t)$ with $t$ in the set
\begin{equation} \label{eq:case_64_non_monolithic}
\begin{split}
\{ & 199, \,200,\, 201,\, 215, \, 216,\, 217,\, 218,\, 219, \,220,\, 221,\, 222,\, 223,\, 224,\, 225,\, 226,\, 227,\, 228, \,229, \\
& 230,\, 231, \, 232, \, 233, \, 234, \, 235, \, 236, \, 237, \, 238,\, 239, \, 240, \, 264, \, 265\}.
\end{split}
\end{equation}
\end{itemize}
\end{proposition}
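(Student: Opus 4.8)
The statement classifies the $267$ isomorphism types of groups of order $64$ according to whether each is non-abelian, non-CCT, monolithic, and --- in the non-monolithic case --- possesses an extra-special quotient of order $32$. The plan is to carry out this classification with \verb|GAP4|, reading the groups off the library of small groups; below I describe the three tests involved and the elementary criteria that make them correct.

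First I would run through the $267$ groups, discard the $11$ abelian ones, and for each of the remaining $256$ decide whether it is a CCT-group by means of the characterization in Proposition \ref{prop:CCT}$\boldsymbol{(3)}$: the group $G$ is CCT exactly when the centralizer of every non-central element is abelian, and it suffices to check this for a set of representatives of the conjugacy classes of $G$. As a shortcut one may first remove, using Proposition \ref{prop:small-CCT}$\boldsymbol{(3)}$, all groups possessing an abelian normal subgroup of index $2$, since these are automatically CCT. What remains should be the $124$ non-abelian, non-CCT groups mentioned before the statement.

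Next I would split these $124$ groups into monolithic and non-monolithic ones. Since $G$ is a $2$-group, being monolithic is equivalent to $Z(G)$ being cyclic: every non-trivial normal subgroup of a $p$-group intersects the center non-trivially, so the minimal normal subgroups of $G$ are precisely the subgroups of order $2$ contained in $Z(G)$, and $\operatorname{mon}(G)$ is non-trivial exactly when there is only one of these, that is, when $Z(G)$ is cyclic (compare Corollary \ref{cor:monolithic-p-group}). Computing the centers should single out the $37$ monolithic groups --- which I expect to be exactly the list \eqref{eq:case_64_monolithic}, yielding part $\boldsymbol{(1)}$ --- together with $87$ non-monolithic ones. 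Finally, for each of these $87$ groups I would enumerate the normal subgroups $N$ with $|N|=2$ and test whether $G/N$ is isomorphic to $G(32, \, 49)$ or $G(32, \, 50)$, the two extra-special groups of order $32$; the groups for which this test succeeds should be exactly the $31$ appearing in \eqref{eq:case_64_non_monolithic}, which is part $\boldsymbol{(2)}$. The remaining $56$ non-monolithic groups, having no extra-special quotient of order $32$, admit no prestructure and hence no diagonal double Kodaira structure by Proposition \ref{prop:order-64} together with Remark \ref{rmk:cct-no-prestructure}; this is why only the $31$ survivors are recorded.

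The sole obstacle is the size of the enumeration: there are far too many groups of order $64$ to process by hand, so the argument is necessarily computer-assisted, and its rigour rests on the \verb|GAP4| code --- Part 3 of the script in \cite{GAP4DDKS} --- rather than on any deep group theory; conceptually only the three criteria recalled above are needed.
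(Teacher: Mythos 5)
Your proposal is correct and takes essentially the same route as the paper, whose proof of this proposition consists precisely in a \texttt{GAP4} enumeration (Part 3 of the script in \cite{GAP4DDKS}) over all groups of order $64$, checking non-abelianness, the CCT property, monolithicity, and the existence of an extra-special quotient of order $32$. The elementary criteria you make explicit --- CCT via abelian centralizers of conjugacy-class representatives, monolithicity of a $2$-group being equivalent to $Z(G)$ cyclic, and order-$32$ quotients detected through central subgroups of order $2$ --- are all valid and simply spell out the subroutines behind the computation.
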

Although we considerably reduced the number of cases to check, these groups are still too many to be attacked  one by one by hand. So
we analyzed them by using  our \verb|GAP4| instruction \verb|CheckStructures|, see Part 4 of the \verb|GAP4| script in the repository  \cite{GAP4DDKS}. In this way we obtain
\begin{theorem} \label{thm:groups_order64_with_structures}
Let $G$ be a group of order $64$ admitting a diagonal double Kodaira structure.
\begin{itemize}
\item If $G$ is non-monolithic then it is isomorphic to $G(64, \, t)$, with $t \in \{ 199, 200, 201, 264, 265 \}$. 
\item If $G$ is monolithic, then it is isomorphic to $G(64, \, t)$, with $t \in \{249, 266\}$. 
\end{itemize}
Moreover,  in each case  the number of diagonal double Kodaira structures with $b=2$ is given in the table below. For all these groups we have $[G, \, G] = \mathbb{Z}_2$, so $n=o(\mathsf{z})=2$. Moreover $($and remarkably$)$ the pair $(|K_1|, \, |K_2|)$ only depends on the group, and not on the structure.
\begin{table}[H]
  \begin{center}
    \begin{tabularx}{0.97\textwidth}{@{}ccccc@{}}
\toprule
$\mathrm{IdSmallGroup}(G)$ & $\emph{Monolithic}$ & $\emph{Number of structures}$ & $\emph{Number of structures}$  & $(\, |K_1|, \, |K_2|\,)$ \\
& & \emph{with}\; $b=2$  & \emph{with} \; $b=2$& \\     
& & &  \emph{modulo}\; $\operatorname{Aut}(G)$ \\ 
\toprule
 $G(64, \, 199)$ & $\emph{no}$ & $138240 \cdot 4096$ & $138240$ & $(64, \, 64)$\\   
 $G(64, \, 200)$ & $\emph{no}$ & $46080 \cdot 12288$ & $46080$ & $(64, \, 64)$\\
 $G(64, \, 201)$ & $\emph{no}$ &  $184320 \cdot 3072$ & $184320$ & $(64, \, 64)$ \\
 $G(64, \, 264)$ & $\emph{no}$ & $14400 \cdot 36864$ & $14400$ & $(32, \, 32)$ \\
 $G(64, \, 265)$ & $\emph{no}$ & $8640 \cdot 61440$ & $8640$ & $(32, \, 32)$ \\
 $G(64, \, 249)$ & $\emph{yes}$ &  $368640 \cdot 1536$ & $368640$ & $(64, \, 64)$\\
 $G(64, \, 266)$ & $\emph{yes}$ & $23040 \cdot 23040$ & $23040$ & $(32, \, 32)$ \\  
     \bottomrule
\end{tabularx}
  \end{center}
\end{table}
\end{theorem}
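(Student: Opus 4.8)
The proof is computational: it amounts to running the \texttt{GAP4} instruction \texttt{CheckStructures} (Part~4 of the script in \cite{GAP4DDKS}) on the list of candidate groups. Here I describe the strategy and why the search terminates quickly. Two preliminary observations guide the plan. First, by Remark~\ref{rmk:cct-no-prestructure} and Proposition~\ref{prop:order-64}, a group of order $64$ admitting a diagonal double Kodaira structure is either non-CCT and monolithic, or non-CCT and non-monolithic with an extra-special quotient of order $32$; hence, by Proposition~\ref{prop:remaining-order-64}, it suffices to examine the $37$ monolithic groups in \eqref{eq:case_64_monolithic} and the $31$ non-monolithic groups in \eqref{eq:case_64_non_monolithic}. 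Second, since a structure is by definition an ordered generating tuple of $G$, any automorphism of $G$ fixing it pointwise is the identity; thus $\operatorname{Aut}(G)$ acts \emph{freely} on the set of structures, so the number of structures equals $|\operatorname{Aut}(G)|$ times the number of $\operatorname{Aut}(G)$-orbits. Consequently it is enough to enumerate one representative per orbit and to know $|\operatorname{Aut}(G)|$.

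For the $31$ non-monolithic candidates the plan is to use the lifting reduction. For each such $G$ one first checks (a one-line computation) that $[G,\,G] \simeq \mathbb{Z}_2$; by Proposition~\ref{prop:order-64}$\,\boldsymbol{(2)}$, every structure on $G$ is then the lift, through a quotient map $\pi \colon G \to H$ onto an extra-special group $H$ of order $32$ with $\z \notin \ker \pi$, of a structure $\bar{\S}$ on $H$. Since $[G,\,G]$ and $[H,\,H]$ both have order $2$, the induced map $[G,\,G] \to [H,\,H]$ is an isomorphism, so $\z$ is forced to be the unique preimage of $\bar{\z}$ lying in $[G,\,G]$; each of the remaining eight entries of $\bar{\S}$ has exactly two preimages in $G$, giving at most $2^8 = 256$ candidate lifts per $\bar{\S}$. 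For each candidate one checks the relations \eqref{eq:ddks-genus-2} and the generation condition. Running this over the $2211840$ structures on $H$ provided by Proposition~\ref{prop:order32} — organized into the $1920$, resp.\ $1152$, orbits under $\operatorname{Aut}(H)$ — singles out $t \in \{199,\,200,\,201,\,264,\,265\}$ and yields the stated counts.

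For the $37$ monolithic candidates the plan is a direct backtracking search for the nine-tuple \eqref{eq:ddks} with $b=2$, which \texttt{CheckStructures} carries out as follows. The element $\z$ is allowed to range only over the nontrivial commutators of $G$ (in particular, when $[G,\,G]\simeq\mathbb{Z}_2$ — which holds for all the surviving groups — it is completely determined); by Remark~\ref{remark:noncentral} each of $\r_{11},\r_{12},\r_{21},\r_{22},\t_{11},\t_{12},\t_{21},\t_{22}$ must be non-central, which already restricts each coordinate to a small subset of $G$; the conjugacy-type relations $(\mathrm{R}1),\ldots,(\mathrm{R}10)$ and $(\mathrm{T}1),\ldots,(\mathrm{T}10)$ of \eqref{eq:ddks-genus-2}, each of which expresses a conjugate or a commutator of one entry in terms of the others and of $\z$, are imposed incrementally, so that every newly chosen coordinate is pruned against the partial tuple; finally the surface relations $(\mathrm{S}1)$, $(\mathrm{S}2)$ and the generation condition are verified last. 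The output is that only $G(64,\,249)$ and $G(64,\,266)$ admit structures, with the counts listed in the table.

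It then remains to read off the remaining data. For the seven surviving groups one checks $[G,\,G]\simeq\mathbb{Z}_2$, so $o(\z)=n=2$; and for every structure produced above one computes $|K_1|=|\langle \r_{11},\t_{11},\r_{12},\t_{12},\z\rangle|$ and $|K_2|=|\langle \r_{21},\t_{21},\r_{22},\t_{22},\z\rangle|$, finding the values constant on each $G$ and equal to $(64,\,64)$ or $(32,\,32)$ as tabulated — a constancy that is an a posteriori observation rather than something established in advance. The main obstacle is the combinatorial size of the nine-tuple search for the monolithic groups: a naive enumeration in a group of order $64$ would face $\sim 64^9$ tuples, and even after fixing $\z$ one is left with $\sim 64^8$. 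What makes the backtracking tractable is precisely the shape of the relations in \eqref{eq:ddks-genus-2}, each of which pins down a conjugate of a generator and hence slices the search tree dramatically; the lifting reduction plays the same role for the non-monolithic groups, replacing a search over $G$ by the much cheaper enumeration of the $256$ preimages of each already-classified structure on the extra-special groups of order $32$.
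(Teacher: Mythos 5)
Your proposal is correct and follows essentially the same route as the paper: the same reduction to the $68$ candidate groups via Remark~\ref{rmk:cct-no-prestructure}, Proposition~\ref{prop:order-64} and Proposition~\ref{prop:remaining-order-64}, followed by the \texttt{GAP4} computation with \texttt{CheckStructures}, with the lifting enumeration from the extra-special quotients of order $32$ (which the paper carries out group by group in Section~\ref{sec:order_64}) and the free action of $\operatorname{Aut}(G)$ on generating tuples accounting for the factorized counts in the table. The only slight difference is organizational — you use the lifting reduction as the primary search for the non-monolithic candidates while the paper runs the same direct check on all $68$ groups and then re-derives the counts via liftings — and your blanket claim that $[G,\,G]\simeq\mathbb{Z}_2$ for all $31$ non-monolithic candidates is not needed, since the bound of $2^8$ candidate lifts already follows from the fact that $\z$ is determined by the surface relation $(\mathrm{S}1)$.
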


We will now investigate in detail each of these groups separately. Part 6 of the \verb|GAP4| script in  \cite{GAP4DDKS} analyzes the group $G(64, \, 199)$; the scripts analyzing the other groups are similar. In order to check the liftings of the structures from a quotient, we need to apply our instruction \verb|CheckStructures| to the two extra-special groups of order $32$ and to generate the corresponding list of diagonal double Kodaira structures in each case; this is done by using Part 5 of the aforementioned \verb|GAP4|  script.

In the remainder of this section, for the sake of brevity we will symply write ``structure'' instead of ``diagonal double Kodaira structure of type $(2, \, 2)$''.

\subsection{The case $G=(64, 199)$} \label{subsec:64_199}

The group $G=G(64, \, 199)$ is generated by six elements $x_1, \ldots, x_6$ subject to the following set of relations:
\begin{table}[H]
  \begin{center}
    \begin{tabularx}{0.8\textwidth}{@{}llllll@{}}
$x_1^2=x_6,$ &                       &                      &                     &                    &                    \\
$ x_2^2=1,$    & $[x_1, \, x_2]=x_5,$  &                      &                     &                    &                    \\
$x_3^2=1,$   & $[x_1, \, x_3]=1,$    & $[x_2, \, x_3]=x_5,$ &                     &                    &                    \\
$ x_4^2=1,$  & $[x_1, \, x_4]=x_5, $ & $[x_2, \, x_4]=1, $  & $[x_3, \, x_4]=1, $ &                    &                    \\
$x_5^2=1,$   & $[x_1, \, x_5]=1,$    & $[x_2, \, x_5]=1,$   & $[x_3, \, x_5]=1,$  & $[x_4, \, x_5]=1,$ &                    \\
$x_6^2=1,$   & $[x_1, \, x_6]=1,$    & $[x_2, \, x_6]=1,$   & $[x_3, \, x_6]=1,$  & $[x_4, \, x_6]=1,$ & $[x_5, \, x_6]=1.$
\end{tabularx}
  \end{center}
\end{table}

We have $Z(G) = \langle x_5, \, x_6 \rangle \simeq \mathbb{Z}_2 \times \mathbb{Z}_2$ and $[G, \, G]= \langle x_5 \rangle \simeq \mathbb{Z}_2$. Setting $x_6=1$ in the presentation above we get a presentation for the extra-special group $G(32, \, 49)$, so $G$ is a central extension
\begin{equation} \label{eq:central-ext-64_199}
1 \to  \langle x_6 \rangle \to  G \to G(32, \, 49) \to 1.  
\end{equation} 
The exact sequence \eqref{eq:central-ext-64_199} is non-split, otherwise $G$ would be isomorphic to $G(32, \, 49)  \times \mathbb{Z}_2$, a contradiction since the latter group is $G(64, 264)$.  One easily checks that $G$ is non-monolithic: in fact,  $\langle x_5 \rangle$ and $\langle x_6 \rangle$ are two central, and hence normal, subgroups isomorphic to $\mathbb{Z}_2$ and having trivial intersection. Thus, by Proposition \ref{prop:order-64}, every structure on $G$ is obtained by lifting a structure from the extra-special quotient $G(32, \, 49)$. 

All the liftings of a given structure 
 \begin{equation} \label{eq:ddks_199}
    \S = (\r_{11}, \, \t_{11}, \, \r_{12}, \, \t_{12}, \;
      \r_{21}, \,
    \t_{21}, \, \r_{22}, \, \t_{22}, \; \z ),
  \end{equation}
 in $G(32, \, 49)$ are obtained multiplying by $x_6$ any number of elements of the structure among the $\mathsf{r}_{ij}, \, \mathsf{t}_{ij}$. Multiplying to the right or to the left is irrelevant, since $x_6$ is central in $G$; so we have $2^8=256$ possible liftings, and one verifies that all of these liftings provide strong structures on $G$. 

The number of structures in $G(32, \, 49)$ being $1152 \cdot 1920$, we infer that the total number of structures on $G$ is $1152 \cdot 1920 \cdot 256$. 

Finally, since $|\operatorname{Aut}(G)|=4096$, the number of structures on $G$ modulo automorphisms is 
\begin{equation}
(1152 \cdot 1920 \cdot 256)/4096 = 138240. 
\end{equation}

\subsection{The case $G=(64, 200)$} \label{subsec:64_200}

The group $G=G(64, \, 200)$ is generated by six elements $x_1,  \ldots, x_6$ subject to the following set of relations:
\begin{table}[H]
  \begin{center}
    \begin{tabularx}{0.8\textwidth}{@{}llllll@{}}
$x_1^2=x_6,$  &                       &                      &                     &                    &                    \\
$ x_2^2=x_5,$ & $[x_1, \, x_2]=x_5,$  &                      &                     &                    &                    \\
$x_3^2=x_5,$  & $[x_1, \, x_3]=1,$    & $[x_2, \, x_3]=x_5,$ &                     &                    &                    \\
$ x_4^2=1,$   & $[x_1, \, x_4]=x_5, $ & $[x_2, \, x_4]=1, $  & $[x_3, \, x_4]=1, $ &                    &                    \\
$x_5^2=1,$    & $[x_1, \, x_5]=1,$    & $[x_2, \, x_5]=1,$   & $[x_3, \, x_5]=1,$  & $[x_4, \, x_5]=1,$ &                    \\
$x_6^2=1,$    & $[x_1, \, x_6]=1,$    & $[x_2, \, x_6]=1,$   & $[x_3, \, x_6]=1,$  & $[x_4, \, x_6]=1,$ & $[x_5, \, x_6]=1.$
\end{tabularx}
  \end{center}
\end{table}

We have $Z(G) = \langle x_5, \, x_6 \rangle \simeq \mathbb{Z}_2 \times \mathbb{Z}_2$ and $[G, \, G]= \langle x_5 \rangle \simeq \mathbb{Z}_2$. Setting $x_6=1$ in the presentation above we get a presentation for the extra-special group $G(32, \, 50)$, so $G$ is a central extension
\begin{equation} \label{eq:central-ext-64_200}
1 \to  \langle x_6 \rangle \to  G \to G(32, \, 50) \to 1.  
\end{equation} 
The exact sequence \eqref{eq:central-ext-64_200} is non-split, otherwise $G$ would be isomorphic to $G(32, \, 50)  \times \mathbb{Z}_2$, a contradiction since the latter group is $G(64, 265)$.  One easily checks that $G$ is non-monolithic: in fact,  $\langle x_5 \rangle$ and $\langle x_6 \rangle$ are two central, and hence normal, subgroups  isomorphic to $\mathbb{Z}_2$ and having trivial intersection. Thus,  by Proposition \ref{prop:order-64},  every structure on $G$ is obtained by lifting a structure from the extra-special quotient $G(32, \, 50)$. 

Since $x_6$ is central in $G$, all the liftings of a given structure in $G(32, \, 50)$ are obtained by multiplying by $x_6$ any number of elements among the $\mathsf{r}_{ij}, \, \mathsf{t}_{ij}$. As in the previous case  we have $2^8=256$ possible liftings, and one verifies that all of these liftings provide strong structures on $G$. 

The number structures in $G(32, \, 50)$ being $1152 \cdot 1920$, we infer that the total number structures on $G$ is   $1152 \cdot 1920 \cdot 256$

Finally, since $|\operatorname{Aut}(G)|=12288$, the number of structures on $G$ modulo automorphisms is 
\begin{equation}
(1152 \cdot 1920 \cdot 256)/12288 = 46080. 
\end{equation}

\subsection{The case $G=(64, 201)$} \label{subsec:64_201}

The group $G=G(64, \, 201)$ is generated by six elements $x_1,  \ldots, x_6$ subject to the following set of relations:
\begin{table}[H]
  \begin{center}
    \begin{tabularx}{0.8\textwidth}{@{}llllll@{}}
$x_1^2=x_6,$  &                       &                      &                     &                    &                    \\
$ x_2^2=1,$   & $[x_1, \, x_2]=x_5,$  &                      &                     &                    &                    \\
$x_3^2=1,$  & $[x_1, \, x_3]=1,$    & $[x_2, \, x_3]=x_5,$ &                     &                    &                    \\
$ x_4^2=x_5,$ & $[x_1, \, x_4]=x_5, $ & $[x_2, \, x_4]=1, $  & $[x_3, \, x_4]=1, $ &                    &                    \\
$x_5^2=1,$  & $[x_1, \, x_5]=1,$    & $[x_2, \, x_5]=1,$   & $[x_3, \, x_5]=1,$  & $[x_4, \, x_5]=1,$ &                    \\
$x_6^2=1,$  & $[x_1, \, x_6]=1,$    & $[x_2, \, x_6]=1,$   & $[x_3, \, x_6]=1,$  & $[x_4, \, x_6]=1,$ & $[x_5, \, x_6]=1.$
\end{tabularx}
  \end{center}
\end{table}

We have $Z(G) = \langle x_5, \, x_6 \rangle \simeq \mathbb{Z}_2 \times \mathbb{Z}_2$ and $[G, \, G]= \langle x_5 \rangle \simeq \mathbb{Z}_2$. In this case $G$ is a (non-trivial) central extension of both the extra-special groups of order $32$, in fact we have two short exact sequences
\begin{equation} \label{eq:central-ext-64_201}
\begin{split}
& 1 \to  \langle x_6 \rangle \to  G \to G(32, \, 49) \to 1 \\
& 1 \to  \langle x_5x_6 \rangle \to  G \to G(32, \, 50) \to 1.
\end{split}
\end{equation} 
 One easily checks that $G$ is non-monolithic: in fact,  $\langle x_5 \rangle$ and $\langle x_6 \rangle$ are two central, and hence normal, subgroups  isomorphic to $\mathbb{Z}_2$ and having trivial intersection. Thus, by Proposition \ref{prop:order-64}, every structure on $G$ is obtained by lifting a structure either from the extra-special quotient $G(32, \, 49)$ or from the extra-special quotient $G(32, \, 50)$. 

The number of structures on both $G(32, \, 49)$ and  $G(32, \, 50)$ being $1152 \cdot 1920$, we infer as before that the total number of liftings on $G$ is $1152 \cdot 1920 \cdot 256$; furthermore, one verifies  that all these liftings provide strong structures on $G$. 

Finally, since $|\operatorname{Aut}(G)|=3072$, the number of structures on $G$ modulo automorphisms is 
\begin{equation}
(1152 \cdot 1920 \cdot 256)/3072 = 184320. 
\end{equation}

\subsection{The case $G=(64, 264)$} \label{subsec:64_264}

The group $G=G(64, \, 264)$ is generated by six elements $x_1,  \ldots, x_6$ subject to the following set of relations:
\begin{table}[H]
  \begin{center}
    \begin{tabularx}{0.8\textwidth}{@{}llllll@{}}
$x_1^2=1,$  &                       &                      &                     &                    &                    \\
$ x_2^2=1,$   & $[x_1, \, x_2]=x_5,$  &                      &                     &                    &                    \\
$x_3^2=1,$  & $[x_1, \, x_3]=1,$    & $[x_2, \, x_3]=x_5,$ &                     &                    &                    \\
$ x_4^2=1,$ & $[x_1, \, x_4]=x_5, $ & $[x_2, \, x_4]=1, $  & $[x_3, \, x_4]=1, $ &                    &                    \\
$x_5^2=1,$  & $[x_1, \, x_5]=1,$    & $[x_2, \, x_5]=1,$   & $[x_3, \, x_5]=1,$  & $[x_4, \, x_5]=1,$ &                    \\
$x_6^2=1,$  & $[x_1, \, x_6]=1,$    & $[x_2, \, x_6]=1,$   & $[x_3, \, x_6]=1,$  & $[x_4, \, x_6]=1,$ & $[x_5, \, x_6]=1.$
\end{tabularx}
  \end{center}
\end{table}

We have $Z(G) = \langle x_5, \, x_6 \rangle \simeq \mathbb{Z}_2 \times \mathbb{Z}_2$ and $[G, \, G]= \langle x_5 \rangle \simeq \mathbb{Z}_2$. The group $G$ is a central extension of  the extra-special group $G(32, \, 49)$, given by a short exact sequence
\begin{equation} \label{eq:central-ext-64_264}
1 \to  \langle x_6 \rangle \to  G \to G(32, \, 49) \to 1.
\end{equation} 
One checks that such a sequence is split, hence $G \simeq G(32, \, 49) \times \mathbb{Z}_2$ and so $G$ is non-monolithic. Thus, by Proposition \ref{prop:order-64},  every  structure on $G$ is obtained by lifting a structure from the extra-special quotient $G(32, \, 49)$. 

As before, the total number of liftings  on $G$ is $1152 \cdot 1920 \cdot 256$. However, in this case not all liftings give structures: in fact,  some of them do not generate $G$, but only a subgroup of order $32$ inside it. More precisely, one verifies that for each structure on $G(32, \, 49)$ there are precisely $16$ liftings that do not generate $G$, so the correct  number  of structures on $G$ is $1152 \cdot 1920 \cdot (256-16)$. Moreover, all these structures are non-strong: more precisely, in all cases we have $|K_1|=|K_2|=32$.

Finally, since $|\operatorname{Aut}(G)|=36864$, the number of structures on $G$ modulo automorphisms is 
\begin{equation}
(1152 \cdot 1920 \cdot 240)/36864 = 14400. 
\end{equation}
 
\subsection{The case $G=(64, 265)$}

The group $G=G(64, \, 265)$ is generated by six elements $x_1,  \ldots, x_6$ subject to the following set of relations:
\begin{table}[H]
  \begin{center}
    \begin{tabularx}{0.8\textwidth}{@{}llllll@{}}
$x_1^2=1,$    &                       &                      &                     &                    &                    \\
$ x_2^2=x_5,$ & $[x_1, \, x_2]=x_5,$  &                      &                     &                    &                    \\
$x_3^2=x_5,$  & $[x_1, \, x_3]=1,$    & $[x_2, \, x_3]=x_5,$ &                     &                    &                    \\
$ x_4^2=1,$   & $[x_1, \, x_4]=x_5, $ & $[x_2, \, x_4]=1, $  & $[x_3, \, x_4]=1, $ &                    &                    \\
$x_5^2=1,$    & $[x_1, \, x_5]=1,$    & $[x_2, \, x_5]=1,$   & $[x_3, \, x_5]=1,$  & $[x_4, \, x_5]=1,$ &                    \\
$x_6^2=1,$    & $[x_1, \, x_6]=1,$    & $[x_2, \, x_6]=1,$   & $[x_3, \, x_6]=1,$  & $[x_4, \, x_6]=1,$ & $[x_5, \, x_6]=1.$
\end{tabularx}
  \end{center}
\end{table}

We have $Z(G) = \langle x_5, \, x_6 \rangle \simeq \mathbb{Z}_2 \times \mathbb{Z}_2$ and $[G, \, G]= \langle x_5 \rangle \simeq \mathbb{Z}_2$. The group $G$ is a central extension of  the extra-special group $G(32, \, 49)$, given by a short exact sequence
\begin{equation} \label{eq:central-ext-64_265}
1 \to  \langle x_6 \rangle \to  G \to G(32, \, 50) \to 1.
\end{equation} 
One checks that such a sequence is split, hence $G \simeq G(32, \, 50) \times \mathbb{Z}_2$ and so $G$ is non-monolithic.  Thus, by Proposition \ref{prop:order-64},  every structure on $G$ is obtained by lifting a structure from its extra-special quotient $G(32, \, 50)$. 

As in the previous case, the total number of liftings  on $G$ is $1152 \cdot 1920 \cdot 256$ and, for each diagonal double Kodaira structure on $G(32, \, 49)$, there are precisely $16$ liftings that do not generate $G$. So,  the total  number  of structures on $G$ is $1152 \cdot 1920 \cdot 240$. Moreover, all these structures are non-strong, since in all cases $|K_1|=|K_2|=32$.

Finally, since $|\operatorname{Aut}(G)|=61440$, the number of structures on $G$ modulo automorphisms is 
\begin{equation}
(1152 \cdot 1920 \cdot 240)/61440 = 8640. 
\end{equation} 
 
 \subsection{The case $G=(64, 249)$} \label{subsec:64_249}

The group $G=G(64, \, 249)$ is generated by six elements $x_1, \ldots, x_6$ subject to the following set of relations:
\begin{table}[H]
  \begin{center}
    \begin{tabularx}{0.8\textwidth}{@{}llllll@{}}
$x_1^2=x_5,$ &                       &                      &                     &                    &                    \\
$ x_2^2=1,$  & $[x_1, \, x_2]=1,$    &                      &                     &                    &                    \\
$x_3^2=1,$   & $[x_1, \, x_3]=1,$    & $[x_2, \, x_3]=x_6,$ &                     &                    &                    \\
$ x_4^2=1,$  & $[x_1, \, x_4]=x_6, $ & $[x_2, \, x_4]=1, $  & $[x_3, \, x_4]=1, $ &                    &                    \\
$x_5^2=x_6,$ & $[x_1, \, x_5]=1,$    & $[x_2, \, x_5]=1,$   & $[x_3, \, x_5]=1,$  & $[x_4, \, x_5]=1,$ &                    \\
$x_6^2=1,$   & $[x_1, \, x_6]=1,$    & $[x_2, \, x_6]=1,$   & $[x_3, \, x_6]=1,$  & $[x_4, \, x_6]=1,$ & $[x_5, \, x_6]=1.$
\end{tabularx}
  \end{center}
\end{table}

We have $Z(G) = \langle x_5 \rangle \simeq \mathbb{Z}_4$ and $[G, \, G]= \langle x_6 \rangle \simeq \mathbb{Z}_2$. Moreover, $G$ is monolithic, with $\operatorname{mon}(G)= [G, \, G]$.  

No quotient of $G$ is isomorphic to either $G(32, \, 49)$ or $G(32, \, 50)$, so no structure on $G$ is obtained as a  lifting  from a quotient. In this sense, the structures on $G$ are really ``new''. The  computation with \verb|GAP4| shows that there are precisely 368640  structures on $G$, modulo the action of $\operatorname{Aut}(G)$. All these structures are strong.

\subsection{The case $G=(64, 266)$}

The group $G=G(64, \, 266)$ is generated by six elements $x_1, \ldots, x_6$ subject to the following set of relations:
\begin{table}[H]
  \begin{center}
    \begin{tabularx}{0.8\textwidth}{@{}llllll@{}}
$x_1^2=1,$   &                       &                      &                     &                    &                    \\
$ x_2^2=1,$  & $[x_1, \, x_2]=x_6,$  &                      &                     &                    &                    \\
$x_3^2=1,$   & $[x_1, \, x_3]=1,$    & $[x_2, \, x_3]=x_6,$ &                     &                    &                    \\
$ x_4^2=1,$  & $[x_1, \, x_4]=x_6, $ & $[x_2, \, x_4]=1, $  & $[x_3, \, x_4]=1, $ &                    &                    \\
$x_5^2=x_6,$ & $[x_1, \, x_5]=1,$    & $[x_2, \, x_5]=1,$   & $[x_3, \, x_5]=1,$  & $[x_4, \, x_5]=1,$ &                    \\
$x_6^2=1,$   & $[x_1, \, x_6]=1,$    & $[x_2, \, x_6]=1,$   & $[x_3, \, x_6]=1,$  & $[x_4, \, x_6]=1,$ & $[x_5, \, x_6]=1.$
\end{tabularx}
  \end{center}
\end{table}

We have $Z(G) = \langle x_5 \rangle \simeq \mathbb{Z}_4$ and $[G, \, G]= \langle x_6 \rangle \simeq \mathbb{Z}_2$. Moreover, $G$ is monolithic, with $\operatorname{mon}(G)= [G, \, G]$.  

No quotient of $G$ is isomorphic to either $G(32, \, 49)$ or $G(32, \, 50)$, so no structure on $G$ is obtained as a lifting from a quotient. In this sense, as in the previous case, the structures on $G$ are really ``new''. The computation with \verb|GAP4| shows that there are precisely 23040 structures on $G$, modulo the action of $\operatorname{Aut}(G)$. All these structures are non-strong, with $|K_1|=|K_2|=32$.

\section{The computation of the first homology group of $S$} \label{sec:first_homology}
We end this section by computing the first homology group $H_1(S, \,
\mathbb{Z})$, where $\mathbf{f} \colon S \to \Sigma_2 \times \Sigma_2$ is the
$G$-cover associated with a diagonal double Kodaira
structure of type $(b, \, n)=(2, \, 2)$ on an extra-special group of order
$32$. To this purpose, we will make use of the following result, which follows from \cite[Theorem p. 254]{Fox57}.

\begin{proposition} \label{prop:fundamental-group-branched-cover}
  Let $G$ be a finite group, and $\varphi \colon \mathsf{P}_2(\Sigma_b)
  \to G$ be an admissible epimorphism such that $\varphi(A_{12})$
  has order $n \geq 2$. If $\mathbf{f} \colon S \to \Sigma_b \times \Sigma_b$
  is the $G$-cover associated with $\varphi$, then
  $\pi_1(S)$ fits into a short exact sequence
  \begin{equation} \label{eq:fundamental-group-branched-cover}
    1 \to \pi_1(S) \to	\mathsf{P}_2(\Sigma_b)^{\operatorname{orb}}
    \stackrel{\,\,\,\bar{\varphi}}{\to} G \to 1,
  \end{equation}
  where the \emph{orbifold braid group} $\mathsf{P}_2(\Sigma_b)^{\operatorname{orb}}$ is defined as the quotient of
  $\mathsf{P}_2(\Sigma_b)$ by the normal closure of the cyclic subgroup
  $\langle A_{12}^n \rangle$, and $\bar{\varphi} \colon
  \mathsf{P}_2(\Sigma_b)^{\operatorname{orb}}  \to G$ is the group epimorphism
  naturally induced by $\varphi$.
\end{proposition}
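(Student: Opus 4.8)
The plan is to compute $\pi_1(S)$ from the part of the $G$-cover lying over the complement of the branch locus, where it is an ordinary covering space, and then to account for the branch curve by Fox's completion theorem.

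First I would set $X:=\Sigma_b\times\Sigma_b-\Delta$, so that $\pi_1(X)=\mathsf{P}_2(\Sigma_b)$ and $A_{12}$ is the homotopy class of a meridian of $\Delta$, and $S^{\circ}:=\mathbf{f}^{-1}(X)$. By construction $\mathbf{f}|_{S^{\circ}}\colon S^{\circ}\to X$ is the connected unramified $G$-cover corresponding to the normal subgroup $\ker\varphi\trianglelefteq\pi_1(X)$, so that $\mathbf{f}_{*}$ identifies $\pi_1(S^{\circ})$ with $\ker\varphi$. The surface $S$ is obtained from $S^{\circ}$ by adjoining the branch curve $R:=\mathbf{f}^{-1}(\Delta)$, an in general disconnected smooth divisor: near each of its components the map $\mathbf{f}$ has the local normal form $(z,w)\mapsto(z^{n},w)$, the exponent being exactly $n$ because $o(\z)=o(\varphi(A_{12}))=n$; hence a tubular neighbourhood of a component of $R$ in $S$ is a disk bundle whose associated punctured-disk bundle sits inside $S^{\circ}$.

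Next I would invoke van Kampen's theorem, equivalently \cite[Theorem p.~254]{Fox57}: capping off the punctured disks kills exactly the meridians of the components of $R$, so that $\pi_1(S)=\pi_1(S^{\circ})/M$, where $M\trianglelefteq\pi_1(S^{\circ})=\ker\varphi$ is normally generated by those meridians. It then remains to locate $M$ inside $\mathsf{P}_2(\Sigma_b)$. The local normal form shows that $A_{12}^{n}$, the $n$-th iterate of a downstairs meridian, lifts to a single upstairs meridian, and running over all components of $R$ and all connecting paths one recovers a complete set of $\mathsf{P}_2(\Sigma_b)$-conjugates of $A_{12}^{n}$; therefore $M$ equals the normal closure $N$ of $A_{12}^{n}$ in $\mathsf{P}_2(\Sigma_b)$ (and $N\subseteq\ker\varphi$, since $\z^{n}=1$ and $\ker\varphi$ is normal). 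Since $\mathsf{P}_2(\Sigma_b)^{\operatorname{orb}}=\mathsf{P}_2(\Sigma_b)/N$ and $\varphi$, killing $A_{12}^{n}$, descends to the epimorphism $\bar\varphi$ on this quotient with $\ker\bar\varphi=\ker\varphi/N$, we obtain $\pi_1(S)\cong\ker\varphi/N=\ker\bar\varphi$, which is precisely the short exact sequence \eqref{eq:fundamental-group-branched-cover}. Conceptually this is the statement that $\mathbf{f}$ is the orbifold covering of $\bigl(\Sigma_b\times\Sigma_b,\ \Delta,\ \mathbb{Z}_n\bigr)$ attached to the finite-index subgroup $\ker\bar\varphi$, together with the observation that this orbifold covering is a genuine manifold because the local isotropy transverse to $R$ is the kernel of the homomorphism $\mathbb{Z}_n\to G$ sending a generator to the local monodromy $\z$, which is trivial as $o(\z)=n$.

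The step I expect to be the main obstacle is the identification $M=N$: one has to be sure that the meridians of the various components of the possibly disconnected curve $R$, conjugated only inside the subgroup $\ker\varphi$, still generate as a normal subgroup the full $\mathsf{P}_2(\Sigma_b)$-normal closure of $A_{12}^{n}$, and not some strictly smaller normal subgroup of $\ker\varphi$. This is exactly the content of \cite[Theorem p.~254]{Fox57}, so in the write-up I would quote that result here rather than reprove it, and otherwise confine myself to fixing the meridian $A_{12}$, checking the local normal form of $\mathbf{f}$ along $\Delta$, and recording the descent of $\varphi$ to $\bar\varphi$.
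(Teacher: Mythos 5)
Your argument is correct and follows essentially the same route as the paper, which states the proposition without further proof as a consequence of Fox's completion theorem \cite[Theorem p.~254]{Fox57}: you simply spell out the standard details (unramified $G$-cover over the complement, local normal form $(z,w)\mapsto(z^n,w)$, killing of upstairs meridians, identification of their normal closure with that of $A_{12}^n$) and, like the paper, defer the key identification to Fox. Nothing further is needed.
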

Proposition \ref{prop:fundamental-group-branched-cover} allows one to compute $\pi_1(S)$, and so its abelianization $H_1(S,
\, \mathbb{Z})$. However, doing all the calculations by hand seems quite
difficult, so we resorted again to \verb|GAP4|. Part 7 of the \verb|GAP4| script in  the repository \cite{GAP4DDKS} contains the script used to analyze the group $G(64, \, 199)$; the scripts analyzing the other groups are similar.
Let us summarize our  results.

\begin{theorem} \label{thm:H1 in order 64}
  Let $\mathbf{f} \colon S \to \Sigma_2 \times \Sigma_2$ be the $G$-cover  associated with a diagonal double Kodaira structure of type $(b,\, n)=(2, \, 2)$ on a finite group $G$ of order $64$, and let $f \colon S \to \Sigma_{b_1} \times \Sigma_{b_2}$ be  the corresponding  diagonal double Kodaira fibration. Then the occurrences for $H_1(S, \, \mathbb{Z})$, $q(S)= \frac{1}{2} \operatorname{rank} \, H_1(S, \, \mathbb{Z})$, $K_S^2$, $c_2(S)$, $\sigma(S)$, $b_1$, $b_2$, $g_1$, $g_2$ are as in the table below.
  \begin{table}[H]
  \begin{center}
    \begin{tabularx}{0.85\textwidth}{@{}cccccccccc@{}}
\toprule
 $\mathrm{IdSmallGroup}(G)$ & $H_1(S, \, \mathbb{Z})$ & $q(S)$ & $K_S^2$ & $c_2(S)$ & $\sigma(S)$ & $b_1$ & $b_2$ & $g_1$ & $g_2$ \\
\toprule
 $G(64, \, 199)$ & $\mathbb{Z}^8 \oplus (\mathbb{Z}_2)^4$ & $4$ & $736$ & $320$ & $32$ & $2$ & $2$ & $81$ & $81$ \\
$G(64, \, 200)$ & $\mathbb{Z}^8 \oplus (\mathbb{Z}_2)^4$ & $4$ & $736$ & $320$ & $32$ & $2$ & $2$ & $81$ & $81$ \\
 $G(64, \, 201)$ & $\mathbb{Z}^8 \oplus (\mathbb{Z}_2)^4$ & $4$ & $736$ & $320$ & $32$ & $2$ & $2$ & $81$ & $81$ \\
 $G(64, \, 264)$ & $\mathbb{Z}^{12} \oplus (\mathbb{Z}_2)^3$ & $6$ & $736$ & $320$ & $32$ & $3$ & $3$ & $41$ & $41$\\   
 $G(64, \, 265)$ & $\mathbb{Z}^{12} \oplus (\mathbb{Z}_2)^3$ & $6$& $736$ & $320$ & $32$ & $3$ & $3$ & $41$ & $41$\\   
 $G(64, \, 249)$ & $\mathbb{Z}^8 \oplus (\mathbb{Z}_2)^4$ & $4$  & $736$ & $320$ & $32$ & $2$ & $2$ & $81$ & $81$ \\
 $G(64, \, 266)$ & $\mathbb{Z}^{12} \oplus (\mathbb{Z}_2)^3$ & $6$ & $736$ & $320$ & $32$ & $3$ & $3$ & $41$ & $41$\\   
$G(64, \, 266)$  & $\mathbb{Z}^{12} \oplus (\mathbb{Z}_2)^2 \oplus \mathbb{Z}_4$ & $6$ & $736$ & $320$ & $32$ & $3$ & $3$ & $41$ & $41$\\   
\bottomrule

\end{tabularx}
  \end{center}
\end{table}
More precisely, in the case $G=G(64, \, 266)$, up to the natural action of $\operatorname{Aut}(G)$ there exist $17280$ diagonal double Kodaira structures such that $H_1(S, \, \mathbb{Z})=\mathbb{Z}^{12} \oplus (\mathbb{Z}_2)^3$ and $5760$ diagonal double Kodaira structures such that $H_1(S, \, \mathbb{Z})=\mathbb{Z}^{12} \oplus (\mathbb{Z}_2)^2 \oplus \mathbb{Z}_4$. 
\end{theorem}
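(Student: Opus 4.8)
The plan is to separate the nine invariants into two groups: the ``numerical'' ones, namely $K_S^2$, $c_2(S)$, $\sigma(S)$, $b_1$, $b_2$, $g_1$, $g_2$, which are determined by the data $(|G|,\,b,\,n)$ together with the indices $m_i=[G:K_i]$, and the homological ones, namely $H_1(S,\,\mathbb{Z})$ and $q(S)$, which depend genuinely on the structure and must be read off from the fundamental group.

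First I would record the common numerology. By Theorem~\ref{thm:groups_order64_with_structures} every group $G$ in the list satisfies $[G,\,G]\simeq\mathbb{Z}_2$, so every diagonal double Kodaira structure on it has $n=o(\mathsf{z})=2$; together with $|G|=64$ and $b=2$ this gives $\mathfrak{n}=1-\tfrac1n=\tfrac12$. Substituting into Proposition~\ref{prop:invariant-S-G} yields, independently of $G$ and of the structure,
\[
K_S^2=c_1^2(S)=64\cdot 2\cdot\bigl(4+2-\tfrac14\bigr)=736,\qquad c_2(S)=64\cdot 2\cdot\bigl(2+\tfrac12\bigr)=320,
\]
and hence $\sigma(S)=\tfrac13\bigl(K_S^2-2\,c_2(S)\bigr)=32$, in agreement with \eqref{eq:slope-signature-S-G}. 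For the fibration data I would invoke the fact, established in Theorem~\ref{thm:groups_order64_with_structures}, that the pair $(|K_1|,\,|K_2|)$ depends only on $G$: it equals $(64,\,64)$ for $t\in\{199,\,200,\,201,\,249\}$ and $(32,\,32)$ for $t\in\{264,\,265,\,266\}$. Since $b-1=1$, formula \eqref{eq:expression-gi} becomes $b_i=m_i+1=[G:K_i]+1$, giving $b_1=b_2=2$ in the first case and $b_1=b_2=3$ in the second; and \eqref{eq:expression-gFi} gives $2g_i-2=\tfrac{64}{m_i}\bigl(2+\tfrac12\bigr)=\tfrac{160}{m_i}$, so $g_i=81$ when $m_i=1$ and $g_i=41$ when $m_i=2$. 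This settles seven of the nine columns uniformly.

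The remaining, and main, step is to compute $H_1(S,\,\mathbb{Z})$, after which $q(S)=\tfrac12\operatorname{rank}H_1(S,\,\mathbb{Z})$ is immediate. The tool is Proposition~\ref{prop:fundamental-group-branched-cover}: $\pi_1(S)$ is the kernel of the epimorphism $\bar{\varphi}\colon\mathsf{P}_2(\Sigma_2)^{\operatorname{orb}}\to G$ induced by the admissible epimorphism $\varphi$ attached to the structure, where $\mathsf{P}_2(\Sigma_2)^{\operatorname{orb}}$ is the quotient of $\mathsf{P}_2(\Sigma_2)$ by the normal closure of $\langle A_{12}^2\rangle$. Concretely I would: (i) write down a finite presentation of $\mathsf{P}_2(\Sigma_2)^{\operatorname{orb}}$ by taking the $4b+1=9$-generator presentation of $\mathsf{P}_2(\Sigma_2)$ from \cite{GG04} used throughout the paper and adjoining the relator $A_{12}^{2}$; (ii) for each structure, run the Reidemeister--Schreier rewriting process to obtain a presentation of the index-$64$ subgroup $\ker\bar{\varphi}=\pi_1(S)$; (iii) abelianise and take a Smith normal form to read off $H_1(S,\,\mathbb{Z})$. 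Since post-composing $\varphi$ with an automorphism of $G$ does not change $\ker\bar{\varphi}$, the group $H_1(S,\,\mathbb{Z})$ depends only on the $\operatorname{Aut}(G)$-orbit of the structure, so it suffices to run the computation over a set of orbit representatives, whose number is exactly the one tabulated in Theorem~\ref{thm:groups_order64_with_structures}; this is what the \verb|GAP4| script of Part~7 in \cite{GAP4DDKS} carries out. One then finds $\mathbb{Z}^{8}\oplus(\mathbb{Z}_2)^{4}$ for every structure when $t\in\{199,\,200,\,201,\,249\}$ and $\mathbb{Z}^{12}\oplus(\mathbb{Z}_2)^{3}$ for every structure when $t\in\{264,\,265\}$, which gives the corresponding rows with $q(S)=4$ and $q(S)=6$; whereas for $t=266$ the answer is not constant, $17280$ of the $23040$ orbits yielding $\mathbb{Z}^{12}\oplus(\mathbb{Z}_2)^{3}$ and the remaining $5760$ yielding $\mathbb{Z}^{12}\oplus(\mathbb{Z}_2)^{2}\oplus\mathbb{Z}_4$, which accounts for the two rows assigned to $G(64,\,266)$.

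The hard part is the computation in steps (ii)--(iii): the index-$64$ Reidemeister--Schreier presentation of $\pi_1(S)$ built from nine generators is large and must be processed separately for each $\operatorname{Aut}(G)$-orbit of structures, which is precisely why the argument is done with \verb|GAP4| rather than by hand. The one conceptual point requiring care is the bifurcation of the answer for $G=G(64,\,266)$, which forces us to partition its $\operatorname{Aut}(G)$-orbits of structures according to the isomorphism type of $H_1(S,\,\mathbb{Z})$ and to list two rows for that group.
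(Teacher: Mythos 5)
Your proposal is correct and follows essentially the same route as the paper: the numerical columns are obtained exactly as in Proposition \ref{prop:invariant-S-G} together with \eqref{eq:expression-gi}, \eqref{eq:expression-gFi} and the $(|K_1|,\,|K_2|)$ data of Theorem \ref{thm:groups_order64_with_structures}, while $H_1(S,\,\mathbb{Z})$ is computed from the kernel of $\bar{\varphi}\colon\mathsf{P}_2(\Sigma_2)^{\operatorname{orb}}\to G$ via Proposition \ref{prop:fundamental-group-branched-cover} and a machine computation over $\operatorname{Aut}(G)$-orbit representatives, which is precisely what Part 7 of the \texttt{GAP4} script in \cite{GAP4DDKS} does. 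Your explicit mention of Reidemeister--Schreier plus Smith normal form just makes the computational step of the paper's argument more concrete.
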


Thus, in the case  $G=(64,\, 266)$ there are two occurrences for the torsion part of $H_1(S, \, \mathbb{Z})$. Moreover, \emph{every} curve of genus $b=2$ can be used as a starting point for our construction, and so such a construction depends on $3$ parameters. We can therefore state the following

\begin{corollary} \label{cor:non-homotopically equivalent}
There exist  two $3$-dimensional families $\mathcal{F}_1$, $\mathcal{F}_2$ of Kodaira doubly fibred surfaces with $b_1=b_2=3$, $g_1=g_2=41$, $\sigma(S)=32$ such that  
\begin{itemize}
\item the surfaces in $\mathcal{F}_1$ and those in $\mathcal{F}_2$ have the same biregular invariants
\begin{equation}
p_g(S)=93, \; \; q(S)=6, \; \; K_S^2=736
\end{equation}
\item the surfaces in $\mathcal{F}_1$ and those in $\mathcal{F}_2$ have the same Betti numbers
\begin{equation}
\mathsf{b}_0=\mathsf{b}_4=1, \; \; \mathsf{b}_1=\mathsf{b}_3=12, \; \; \mathsf{b}_2=342
\end{equation}
\item the surfaces in $\mathcal{F}_1$ and those in $\mathcal{F}_2$ have different torsion part of $H_1(S, \, \mathbb{Z})$. In particular, they are not homotopically equivalent.
\end{itemize}

\begin{remark} \label{rem:first occurrence}
The ones described in Corollary \ref{cor:non-homotopically equivalent} are, to our knowledge, the first explicit examples of positive-dimensional families of doubly fibred Kodaira surfaces having the same base and fibre genera, the same biregular invariants, the same Betti numbers and different fundamental group.

We do not know if the same phenomenon occurs when $|G|=32$. The computation with \verb|GAP4| shows that all the surfaces in the two families constructed in \cite[Section 5]{PolSab22}, and corresponding to the extra-special groups $\mathsf{H}_5(\mathbb{Z}_2)$ and $\mathsf{G}_5(\mathbb{Z}_2)$, satisfy $H_1(S, \, \mathbb{Z})=\mathbb{Z}^8 \oplus (\mathbb{Z}_2)^4$.  Therefore  the torsion part in $H_1(S, \, \mathbb{Z})$ does not distinguish them; it seems an interesting  problem to establish whether the fundamental groups of surfaces in different families are isomorphic or not. 
\end{remark}

\end{corollary}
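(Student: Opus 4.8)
The plan is to exhibit both families out of the single group $G=G(64,\,266)$ and then to read off all the required invariants from the general formulae of Section \ref{sec:DDKS} together with Theorem \ref{thm:H1 in order 64}. First I would fix $G=G(64,\,266)$, which by Theorem \ref{thm:groups_order64_with_structures} admits diagonal double Kodaira structures of type $(2,\,2)$; by Theorem \ref{thm:H1 in order 64}, exactly $17280$ of them (modulo $\operatorname{Aut}(G)$) produce a surface with $H_1(S,\,\mathbb{Z})=\mathbb{Z}^{12}\oplus(\mathbb{Z}_2)^3$, while exactly $5760$ produce one with $H_1(S,\,\mathbb{Z})=\mathbb{Z}^{12}\oplus(\mathbb{Z}_2)^2\oplus\mathbb{Z}_4$. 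I would pick one structure $\S_1$ of the first kind and one structure $\S_2$ of the second kind; by Proposition \ref{prop:correspondence_DDKS_covers} each $\S_j$ corresponds to an admissible epimorphism $\varphi_j\colon\mathsf{P}_2(\Sigma_2)\to G$, and since $\mathsf{P}_2(\Sigma_2)$ has a presentation independent of the chosen genus-$2$ curve, feeding a \emph{varying} curve $\Sigma_2$ into the construction of Section \ref{sec:DDKS} (Grauert--Remmert extension and GAGA to obtain the $G$-cover $\mathbf{f}$, then the simultaneous Stein factorization \eqref{dia:Stein-Kodaira-gi}) yields a diagonal double Kodaira fibration $f\colon S\to\Sigma_{b_1}\times\Sigma_{b_2}$. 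Letting $\Sigma_2$ range over the moduli space $\mathcal{M}_2$, which has dimension $3g-3=3$, this defines the two families $\mathcal{F}_1$ and $\mathcal{F}_2$.

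Next I would compute the common invariants. Since $[G,\,G]\simeq\mathbb{Z}_2$ we have $n=2$, hence $\mathfrak{n}=1-\tfrac1n=\tfrac12$; moreover for $G(64,\,266)$ we have $|K_1|=|K_2|=32$, i.e.\ $m_1=m_2=[G:K_i]=2$. Substituting $|G|=64$, $b=2$, $\mathfrak{n}=\tfrac12$ into Proposition \ref{prop:invariant-S-G} gives $K_S^2=c_1^2(S)=736$, $c_2(S)=320$, and $\sigma(S)=\tfrac13(736-640)=32$; then $\chi(\mathcal{O}_S)=\tfrac1{12}(K_S^2+c_2(S))=88$, and with $q(S)=6$ from Theorem \ref{thm:H1 in order 64} this yields $p_g(S)=\chi(\mathcal{O}_S)-1+q(S)=93$, whence the Betti numbers $\mathsf{b}_0=\mathsf{b}_4=1$, $\mathsf{b}_1=\mathsf{b}_3=2q(S)=12$, $\mathsf{b}_2=c_2(S)-2+2\mathsf{b}_1=342$. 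Formulae \eqref{eq:expression-gi} and \eqref{eq:expression-gFi} give $b_i-1=m_i(b-1)=2$ and $2g_i-2=\tfrac{|G|}{m_i}(2b-2+\mathfrak{n})=80$, i.e.\ $b_1=b_2=3$ and $g_1=g_2=41$. None of these numbers depends on the chosen structure, so they coincide on $\mathcal{F}_1$ and $\mathcal{F}_2$.

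Then I would check that each $\mathcal{F}_j$ is genuinely $3$-dimensional and that the two families are not homotopy equivalent. For dimension, I would argue that the base curve is recovered from $S$ up to finite ambiguity: the two Kodaira fibrations on $S$ have bases $\Sigma_{b_1},\Sigma_{b_2}$, and each $\Sigma_{b_i}$ is an étale double cover of $\Sigma_2$ via $\theta_i$, so the assignment $\Sigma_2\mapsto S$ is generically finite onto its image and $\dim\mathcal{F}_j=\dim\mathcal{M}_2=3$. Finally, by Theorem \ref{thm:H1 in order 64} the surfaces in $\mathcal{F}_1$ have torsion part $(\mathbb{Z}_2)^3$ in $H_1(S,\,\mathbb{Z})$ while those in $\mathcal{F}_2$ have torsion part $(\mathbb{Z}_2)^2\oplus\mathbb{Z}_4$; since $H_1(-,\,\mathbb{Z})$ is a homotopy invariant, no member of $\mathcal{F}_1$ is homotopy equivalent to a member of $\mathcal{F}_2$, and in particular, as $H_1(S,\,\mathbb{Z})\cong\pi_1(S)^{\mathrm{ab}}$, their fundamental groups are not isomorphic.

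The substantive input is entirely Theorem \ref{thm:H1 in order 64}: the hard part is establishing, via the presentation of $\mathsf{P}_2(\Sigma_2)^{\operatorname{orb}}$ in Proposition \ref{prop:fundamental-group-branched-cover} and the \verb|GAP4| computation, that the structures on $G(64,\,266)$ really split into the two distinct torsion classes with the stated counts. Granting that, the only geometric point requiring care is the generic finiteness used in the dimension count, i.e.\ that varying $\Sigma_2$ genuinely deforms $S$ in a $3$-parameter way instead of producing isomorphic surfaces; this follows from the rigidity of the étale covers $\theta_i\colon\Sigma_{b_i}\to\Sigma_2$ intrinsic to the construction.
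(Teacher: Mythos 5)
Your proposal is correct and follows essentially the same route as the paper: the corollary is deduced from the two torsion classes for $G(64,\,266)$ in Theorem \ref{thm:H1 in order 64} together with the invariant formulae of Proposition \ref{prop:invariant-S-G} and \eqref{eq:expression-gi}--\eqref{eq:expression-gFi}, letting the genus-$2$ base curve vary over its $3$-dimensional moduli space, and your numerical checks ($K_S^2=736$, $c_2=320$, $\sigma=32$, $p_g=93$, $\mathsf{b}_2=342$, $b_i=3$, $g_i=41$) all agree with the paper's table. The only point where you go beyond the paper is the generic-finiteness argument for the dimension count, which the paper simply asserts by saying the construction depends on $3$ parameters; your sketch via the finitely many fibrations of $S$ over curves of genus $\geq 2$ and the finitely many \'etale double covers is a reasonable way to make that assertion precise.
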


\section*{Acknowledgements}
Francesco Polizzi was partially supported by GNSAGA-INdAM.


\newpage

\section*{Appendix. Non-abelian groups \texorpdfstring{$G$}{G}
of order   \texorpdfstring{$|G| = 36, \, 40, \, 48, \, 54, \, 56, \, 60$}{|G|
= 36, \, 40, \, 48, \, 54, \, 56, \, 60}}
  \label{Appendix_A}

\vskip 0.5cm

Source: \url{https://people.maths.bris.ac.uk/~matyd/GroupNames/}

\vskip 0.5cm

\begin{table}[H]
\caption[caption]{Non-abelian groups of
    order $36$.\\
 }
  \label{table:36-nonabelian}
    \centering
    \begin{tabularx}{0.85\linewidth}{@{}ccc@{}}
      \toprule
      $\mathrm{IdSmallGroup}(G)$ & $G$ &
      $\mathrm{Presentation}$ \\
      \toprule
      $G(36, \, 1)$ & $\mathsf{D}_{4, \, 9, \, -1}$ &
      $\langle x, \, y \; | \;
      x^4=y^9=1, \, xyx^{-1}=y^{-1} \rangle$	\\
&& \\
      $G(36, \, 3)$ & $(\mathbb{Z}_2)^2 \rtimes \mathbb{Z}_9$ &
      $\langle a, \, b,\, x \; | \;
      a^2=b^2=x^9=1, \, [a, \, b]=1, $ \\
      & & $xax^{-1}=b, \,  xbx^{-1}=ab \rangle$ \\
&& \\
	$G(36, \, 4)$ & $\mathsf{D}_{36}$ &
      $\langle x, \, y \; | \;
      x^2=y^{18}=1, \, xyx^{-1}=y^{-1} \rangle$ \\
      && \\
$G(36, \, 6)$ & $\mathsf{D}_{4, \, 3, \, -1} \times \mathbb{Z}_3$ &
     $\langle x, \, y \; | \; x^4=y^3=1, \, xyx^{-1} = y^{-1} \rangle \times
     \langle z \; | \; z^3=1 \rangle$ \\
&& \\
      $G(36, \, 7)$ & $(\mathbb{Z}_3)^2  \rtimes \mathbb{Z}_4$ &
      $\langle a, \, b,\, x \; | \;
      a^3=b^3=x^4=1, \, [a, \, b]=1, $ \\
	& & $xax^{-1}=a^{-1}, \,  xbx^{-1}=b^{-1} \rangle$	\\
&& \\
$G(36, \, 9)$ & $(\mathbb{Z}_3)^2 \rtimes \mathbb{Z}_4$ &
      $\langle a, \, b,\, x \; | \;
      a^3=b^3=x^4=1, \, [a, \, b]=1, $ \\
	& & $xax^{-1}=b, \,  xbx^{-1}=a^{-1} \rangle$	\\
&& \\
$G(36, \, 10)$ & $\mathsf{S}_3 \times \mathsf{S}_3$ & $ \langle (1, \,2),
(1,\, 2, \, 3), \, (4, \,5), (4,\, 5, \, 6)\rangle $\\
&& \\
$G(36, \, 11)$ & $\mathsf{A}_4	\times \mathbb{Z}_3$ &$ \langle (1 \, 2)(3 \,
4), \, (1\, 2\, 3) \rangle \times   \langle z \; | \; z^3=1  \rangle $\\
&& \\
$G(36, \, 12)$ & $\mathsf{D}_{12}  \times \mathbb{Z}_3$ & $ \langle x, \,
y  \; | \; x^2=y^6=1, \, xyx^{-1} = y^{-1} \rangle \times   \langle z \; |
\; z^3=1  \rangle $\\
&& \\
$G(36, \, 13)$ &  $((\mathbb{Z}_3)^2  \rtimes \mathbb{Z}_2) \times
\mathbb{Z}_2$ &
$\langle a, \, b, \, c, \, x \; | \; a^3=b^3=c^2=x^2=1,$ \\
& & $[a, \, b]=1, \, [b, \, c]=1, \, [x, \, c]=1,$ \\
& &   $xax^{-1}=a^{-1}, \,  xbx^{-1}=b^{-1} \rangle$ \\
\bottomrule
\end{tabularx}
  \end{table}

\newpage

\begin{table}[H]
    \caption[caption]{Non-abelian groups of
    order $40$.\\
  }
  \label{table:40-nonabelian}
    \centering
    \begin{tabularx}{0.85\linewidth}{@{}ccc@{}}
      \toprule
      $\mathrm{IdSmallGroup}(G)$ & $G$ &
      $\mathrm{Presentation}$ \\
      \toprule
     $G(40,\,1)$ & $\mathsf{D}_{8, \, 5, \, -1}$ & $\langle x, \, y \; | \;
     x^8=y^5=1, \, xyx^{-1} = y^{-1} \rangle$ \\
     && \\
      $G(40,\,3)$ & $\mathsf{D}_{8, \, 5, \, 2}$ & $\langle x, \, y \; | \;
      x^8=y^5=1, \, xyx^{-1} = y^2 \rangle$ \\
     && \\
$G(40, \, 4)$ & $\mathbb{Z}_5 \rtimes \mathsf{Q}_8$ & $\langle a, \, i,
\, j  \; | \; a^5=i^4=j^4=1,$ \\ && $ iji=j, \,  jij=i, \, [i, \, a]=1, \,
jaj^{-1}=a^{-1} \rangle$ \\
&& \\
$G(40, \, 5)$ & $\mathsf{D}_{10} \times \mathbb{Z}_4$ & $\langle x, \, y \;
| \; x^2=y^5=1, \, xyx^{-1}=y^{-1}  \rangle \times \langle z \; | \; z^4=1
\rangle$ \\
&& \\
$G(40, \, 6)$ & $\mathsf{D}_{40} $ & $\langle x, \, y \; | \; x^2=y^{20}=1,
\, xyx^{-1}=y^{-1} \rangle $  \\
&& \\
$G(40, \, 7)$ &  $\mathsf{D}_{4, \, 5, \, -1}	\times \mathbb{Z}_2$ &
$\langle x, \, y \; | \; x^4=y^5=1, \, xyx^{-1}=y^{-1}	\rangle \times
\langle z \; | \; z^2=1 \rangle$ \\
&& \\
$G(40, \, 8)$ & $(\mathbb{Z}_2 \times \mathbb{Z}_{10}) \rtimes \mathbb{Z}_2$
&  $\langle x, \, y, \, z \; | \; x^2=y^2=z^5=1, $ \\ && $(xz)^2=1, \,
(yx)^4=1, \, [y, \, z]=1  \rangle$ \\
&& \\
$G(40, \, 10)$ & $\mathsf{D}_8 \times \mathbb{Z}_5$ &  $\langle x, \, y \;
| \; x^2=y^4=1, \, xyx^{-1}=y^{-1}  \rangle \times \langle z \; | \; z^5=1
\rangle$ \\
&& \\
$G(40, \, 11)$ & $\mathsf{Q}_8 \times \mathbb{Z}_5$ & $ \langle i,\,j,\,k
\mid i^2 = j^2 =
      k^2 = ijk \rangle  \times \langle z \; | \; z^5=1 \rangle$ \\
&& \\
$G(40, \, 12)$ & $\mathsf{D}_{4, \, 5, \, 2}   \times \mathbb{Z}_2$ & $\langle
x, \, y \; | \; x^4=y^5=1, \, xyx^{-1}=y^2  \rangle \times \langle z \; |
\; z^2=1 \rangle$ \\
&& \\
$G(40, \, 13)$ & $\mathsf{D}_{10}   \times \mathbb{Z}_2 \times \mathbb{Z}_2$
& $\langle x, \, y \; | \; x^2=y^5=1, \, xyx^{-1}=y^{-1}  \rangle $ \\ &&
$ \times  \,  \langle z, \, w \; | \; z^2=w^2=[z, \, w]=1 \rangle$ \\
&& \\
\bottomrule
\end{tabularx}
  \end{table}

\newpage

\begin{xltabular}{0.55\linewidth}{@{}ccc@{}}
	\caption{Non-abelian groups of
	    order $48$. } \\
	    \label{table:48-nonabelian} \\
	\toprule
	    $\mathrm{IdSmallGroup}(G)$ & $G$ &
      $\mathrm{Presentation}$ \\
      \toprule
\endfirsthead

	\toprule
	    $\mathrm{IdSmallGroup}(G)$ & $G$ &
      $\mathrm{Presentation}$ \\
      \toprule
\endhead

\midrule
\multicolumn{2}{r@{}}{\itshape \ldots continues on next page}\\
\endfoot
\midrule
\endlastfoot

 $G(48,\,1)$ & $\mathsf{D}_{16, \, 3, \, -1}$ & $\langle x, \, y \; |
     \; x^{16}=y^3=1, \, xyx^{-1} = y^{-1} \rangle$ \\
     && \\
      $G(48,\,3)$ & $(\mathbb{Z}_4)^2  \rtimes \mathbb{Z}_3$ & $\langle a,
      \, b, \, x \; | \; a^4=b^4=x^3=1,$ \\
      && $[a, \, b]=1, \, xax^{-1}=b, \, xbx^{-1}=a^3b^3 \rangle $ \\
&& \\
$G(48, \, 4)$ &   $\mathsf{S}_3 \times \mathbb{Z}_8$ & $ \langle (1\, 2), \,
(1\, 2\, 3) \rangle \times \langle z \; | \; z^8=1 \rangle$ \\
  && \\
$G(48,\,5)$ & $\mathsf{D}_{2, \, 24, \, 5}$ & $\langle x, \, y \; | \;
x^2=y^{24}=1, \, xyx^{-1} = y^5  \rangle$ \\
     && \\
  $G(48,\,6)$ & $\mathsf{D}_{2, \, 24, \, 11}$ & $\langle x, \, y \; | \;
  x^2=y^{24}=1, \, xyx^{-1} = y^{11}  \rangle$ \\
     && \\
$G(48,\,7)$ & $\mathsf{D}_{48}$ & $\langle x, \, y \; | \; x^2=y^{24}=1, \,
xyx^{-1} = y^{-1}  \rangle$ \\
&& \\
$G(48, \, 8)$ & $\mathbb{Z}_3 \rtimes \mathsf{Q}_{16}$ & $\langle a, \, x,
\, y \; | \; a^3=x^8=y^4=1, \, x^4=y^2, $ \\
&& $[x, \, a]=1, \, yay^{-1}=a^{-1}, \, yxy^{-1}=x^{-1} \rangle$ \\
     && \\
 $G(48, \, 9)$ & $\mathsf{D}_{8, \, 3, \, -1} \times \mathbb{Z}_2$ & $\langle
 x, \, y \; | \; x^8=y^3=1, \, xyx^{-1} = y^{-1} \rangle \times     \langle
 z \; | \; z^2=1 \rangle$ \\
 && \\
$G(48, \, 10)$ & $\mathsf{D}_{8, \, 3, \, -1}  \rtimes \mathbb{Z}_2$ &
$\langle x, \, y, \, z \; | \; x^8=y^3=z^2=1,$ \\
&& $ xyx^{-1}=y^{-1}, \, zxz^{-1}=x^5, \,  [z, \, y]=1 \rangle$ \\
&& \\
$G(48, \, 11)$ & $\mathsf{D}_{4, \, 3, \, -1}  \times \mathbb{Z}_4$ &
$\langle x, \, y \; | \; x^4=y^3=1, \, xyx^{-1} = y^{-1} \rangle \times
\langle z \; | \; z^4=1 \rangle$ \\
 && \\
$G(48, \, 12)$ & $\mathsf{D}_{4, \, 3, \, -1}  \rtimes \mathbb{Z}_4$ &
$\langle x, \, y, \, z \; | \; x^4=y^3=z^4=1, $ \\
&& $ xyx^{-1} = y^{-1}, \, zxz^{-1}=x^{-1}, \, [z, \, y]=1 \rangle$ \\
&& \\
$G(48,\,13)$ & $\mathsf{D}_{4, \, 12, \, -1}$ & $\langle x, \, y \; | \;
x^4=y^{12}=1, \, xyx^{-1} = y^{-1} \rangle$ \\
&& \\
$G(48,\,14)$ & $(\mathbb{Z}_2  \times \mathbb{Z}_{12}) \rtimes \mathbb{Z}_2$
& $\langle a, \, b, \, x \; | \; a^2=b^{12}=x^2=1,$ \\
      && $[a, \, b]=1, \, [x, \, a]=1, \, xbx^{-1}=ab^5 \rangle $ \\
&& \\
$G(48, \, 15)$ & $\mathsf{D}_{2, \, 12, \, 7}  \rtimes \mathbb{Z}_2$ &
$\langle x, \, y, \, z \; | \; x^2=y^{12}=z^2=1,$ \\
&& $ xyx^{-1}=y^7, \, zxz^{-1}=xy^9, \, zyz^{-1}=y^{-1} \rangle$ \\
&& \\
$G(48, \, 16)$ & $(\mathbb{Z}_3 \rtimes \mathsf{Q}_8) \rtimes \mathbb{Z}_2$
& $\langle  x, \, y, \, z \; | \; x^4=y^2=z^3=1, $\\
&& $x^{-1}zx=z^{-1}, \,z^{-1}yz=y^{-1},$ \\
&& $(x^{-1}yx^{-1})^2=1, \, (yx)^3yx^{-1} = 1 \rangle$ \\
     && \\
$G(48, \, 17)$	& $(\mathsf{Q}_8 \times \mathbb{Z}_3)  \rtimes \mathbb{Z}_2$ &
$\langle i, \, j, \, a, \, z \; | \; i^4=j^4=a^3=z^2=1, $\\
&& $(ij)^4=1, \, iji=j, \, jij=1, $\\
&& $[a, \, i]=1, \, [a, \, j]=1, $\\
&& $ziz^{-1}=i^{-1}, \, zjz^{-1}=ji, \, zaz^{-1}=a^{-1} \rangle$ \\
 && \\
 $G(48, \, 18)$ & $\mathbb{Z}_3 \rtimes \mathsf{Q}_{16}$ & $\langle a, \,
 x, \, y \; | \; a^3=x^8=y^4=1, \, x^4=y^2, $ \\
&& $xax^{-1}=a^{-1}, \, yay^{-1}=a^{-1}, \, yxy^{-1}=x^{-1} \rangle$ \\
     && \\
      $G(48, \, 19)$ & $(\mathbb{Z}_2 \times \mathbb{Z}_6)   \rtimes
      \mathbb{Z}_4$ & $\langle a, \, b, \, x \; | \;  a^2=b^6=x^4=1, $ \\
  && $[a, \, b]=1, \, xax^{-1} = a,  \, xbx^{-1} = ab^{-1} \rangle$\\
  && \\
 $G(48, \, 21)$ & $(\mathbb{Z}_2 \times \mathbb{Z}_6)	\rtimes \mathbb{Z}_4$
 & $\langle a, \, b, \, x \; | \;  a^2=b^6=x^4=1, $ \\
  && $[a, \, b]=1, \, xax^{-1} = ab^3,	\, [x, \, b]=1 \rangle$\\
$G(48, \, 22)$ & $\mathsf{D}_{4, \, 4, \, -1}  \times \mathbb{Z}_3$ &
$\langle x, \, y \; | \; x^4=y^4=1, \, xyx^{-1} = y^{-1} \rangle \times
\langle z \; | \; z^3=1 \rangle$ \\
 && \\
 $G(48, \, 24)$ & $\mathsf{D}_{2, \, 8, \, 5}  \times \mathbb{Z}_3$ &
 $\langle x, \, y \; | \; x^2=y^8=1, \, xyx^{-1} = y^5	\rangle  \times
 \langle z \; | \; z^3=1 \rangle$ \\
 && \\
$G(48, \, 25)$ & $\mathsf{D}_{16}  \times \mathbb{Z}_3$ & $\langle x, \,
y \; | \; x^2=y^8=1, \, xyx^{-1} = y^{-1}  \rangle  \times     \langle z \;
| \; z^3=1 \rangle$ \\
 && \\
$G(48, \, 26)$ & $\mathsf{QD}_{16}  \times \mathbb{Z}_3$ & $\langle x, \,
y \; | \; x^2=y^8=1, \, xyx^{-1} = y^3	\rangle  \times     \langle z \; |
\; z^3=1 \rangle$ \\
 && \\
  $G(48, \, 27)$ & $\mathsf{Q}_{16}  \times \mathbb{Z}_3$ & $\langle x,
  \, y, \, z\; | \; x^4=y^2=z^2=xyz  \rangle  \times	 \langle w \; | \;
  w^3=1 \rangle$ \\
 && \\
$G(48, \, 28)$ & $\mathsf{SL}(2, \, \mathbb{F}_3) \, . \, \mathbb{Z}_2$ &
$\langle x, \, y, \, z\; | \; x^4=y^3=z^2=xyz  \rangle$ \\
 && \\
 $G(48, \, 29)$ & $\mathsf{GL}(2, \, \mathbb{F}_3)$ & $\langle x, \, y, \,
 z\; | \; x^2=y^3=z^4=1, $ \\
 && $(xy)^{2}=1, \, (z^{-1}y)^3=1, \, (y^{-1}xz^{-1})^2=1,$ \\
 && $[z^2, \, x]=1, \, [z^2, \, y]=1 \rangle$ \\
&& \\
$G(48, \, 30)$ & $\mathsf{A}_4 \rtimes \mathbb{Z}_4$ & $\langle x, \, y, \,
z \; | \; x^4=y^3=z^2=1, $\\
&& $x^{-1}yx=y^{-1}, (zx^{-2})^2=1, (y^{-1}z)^3=1,$ \\
&& $y^{-1}zyxzx^{-1}=1 \rangle$ \\
&& \\
$G(48, \, 31)$ & $\mathsf{A}_4	\times \mathbb{Z}_4$ &$ \langle (1 \, 2)(3 \,
4), \, (1\, 2\, 3) \rangle \times   \langle z \; | \; z^4=1  \rangle $\\
&& \\
$G(48, \, 32)$ & $\mathsf{SL}(2, \, \mathbb{F}_3) \times \mathbb{Z}_2$ &
$\langle x, \, y, \, z\; | \; x^3=y^3=z^2=xyz  \rangle \times \langle w \;
| \; w^2=1 \rangle$ \\ \\
 && \\
$G(48, \, 33)$ & $((\mathbb{Z}_2 \times \mathbb{Z}_4) \rtimes \mathbb{Z}_2)
\rtimes \mathbb{Z}_3$ &
$\langle x, \, y \; | \; x^3=y^3=z^2=w^2=xyz, $ \\
&& $[x, \, w]=1, \, [y, \, w]=1, [z, \, w]=1 \rangle$ \\
&& \\
$G(48, \, 34)$ & $\mathsf{Q}_{24}  \times \mathbb{Z}_2$ & $\langle x, \,
y, \, z\; | \; x^6=y^2=z^2=xyz	\rangle  \times     \langle w \; | \; w^2=1
\rangle$ \\
&& \\
 $G(48, \, 35)$ & $\mathsf{S}_3  \times \mathbb{Z}_2 \times \mathbb{Z}_4$
 & $\langle (1 \, 2), \, (1 \, 2\, 3) \rangle \times	 \langle z \; | \;
 z^2=1 \rangle \times \langle w \; | \; w^4=1 \rangle $ \\
& \\
$G(48, \, 36)$ & $\mathsf{D}_{24}  \times \mathbb{Z}_2$ & $\langle x, \, y \;
| \; x^2=y^{12}=1, \, xyx^{-1} = y^{-1}  \rangle  \times     \langle z \;
| \; z^2=1 \rangle$ \\
 && \\
 $G(48, \, 37)$ & $(\mathbb{Z}_2 \times \mathbb{Z}_{12})   \rtimes
 \mathbb{Z}_2$ & $\langle a, \, b, \, x \; | \;  a^2=b^{12}=x^2=1, $ \\
    && $[a, \, b]=1, \, xax^{-1} = ab^6,  \, xbx^{-1} = b^5 \rangle$\\
&& \\
 $G(48, \, 38)$ & $\mathsf{S}_3 \times \mathsf{D}_8$ &	$\langle (1 \, 2), \,
 (1 \, 2\, 3) \rangle \times \langle x, \, y \; | \; x^2=y^4=1, \, xyx^{-1}
 = y^{-1}  \rangle$ \\
&  &\\
$G(48, \, 39)$ & $(\mathsf{S}_3 \times \mathbb{Z}_4) \rtimes \mathbb{Z}_2$
&  $\langle a, \, b, \, c, \, d \; | \; a^2=b^4=c^2=d^3=1,$ \\
&& $[a, \, b]=1, \, [b, \, d]=1, \,  [c, \, d]=1, $ \\
&& $(ad)^2=1, \, (cb^{-1})^2=1, \, b^2(ca)^2=1 \rangle$ \\
&& \\
$G(48, \, 40)$ & $\mathsf{S}_3 \times \mathsf{Q}_8$ &  $\langle (1 \, 2), \,
(1 \, 2\, 3) \rangle \times \langle i, \, j, \, k\; | \; i^2=j^2=k^2=ijk
\rangle$  \\
&  &\\
$G(48, \, 41)$ & $(\mathsf{S}_3 \times \mathbb{Z}_4) \rtimes \mathbb{Z}_2$
&  $\langle a, \, b, \, c \; | \; a^{12}=b^4=c^2=1, $ \\
&& $a^6=b^2, \, [b, \, c]=1, $ \\
&& $bab^{-1}=a^7, \, cac^{-1}=a^{-1} \rangle$ \\
& & \\
$G(48, \, 42)$ & $(\mathbb{Z}_2 \times \mathbb{Z}_6)   \rtimes \mathbb{Z}_4$
& $\langle a, \, b, \, x \; | \;  a^2=b^6=x^4=1, $ \\
    && $[a, \, b]=1, \, xax^{-1} = a,  \, xbx^{-1} = b^{-1} \rangle$\\
$G(48, \, 43)$ & $(((\mathbb{Z}_2)^2 \times \mathbb{Z}_3) \rtimes \mathbb{Z}_2)
\times \mathbb{Z}_2$ & $\langle x, \, y, \, z, \, w, \, t \; | \; x^2 =y^2=
z^2=  w^3 =t^2 =1, $\\
&& $[y, \, z]=1, \, [y,\, w]=1, \, [z, \, w]=1, $ \\
&&  $[x, \, t]=1, \, [y, \, t]=1, \ [z, \, t]=1, \, [w, \, t]=1, $ \\
&& $xyx^{-1}=y, \, xzx^{-1}=zy, \, xwx^{-1}=w^{-1} \rangle$ \\
&& \\
$G(48, \, 45)$ & $\mathsf{D}_8	\times \mathbb{Z}_6$ & $\langle x, \, y \;
| \; x^2=y^4=1, \, xyx^{-1} = y^{-1}  \rangle  \times	  \langle z \; | \;
z^6=1 \rangle$ \\
 && \\
 $G(48, \, 46)$ & $\mathsf{Q}_8  \times \mathbb{Z}_6$ & $\langle i, \, j,
 \, k\; | \; i^2=j^2=k^2=ijk  \rangle  \times	  \langle z \; | \; z^6=1
 \rangle$ \\
 && \\
 $G(48, \, 47)$ & $((\mathbb{Z}_2 \times \mathbb{Z}_4) \rtimes
 \mathbb{Z}_2) \times \mathbb{Z}_3 $ & $\langle x, \, y, \, z, \, w \; | \;
 x^2=y^2=z^4=w^3=1, $ \\
 && $ [x, \, w]=1, \, [y, \, w]=1, \, [z, \, w]=1, $ \\
 && $[x, \, z]=1, \, [y, \, z]=1, \, z^2(yx)^2=1 \rangle$ \\
&& \\
$G(48, \, 48)$ & $\mathsf{S}_4 \times \mathbb{Z}_2$ &  $\langle (1 \, 2), \,
(1 \, 2 \, 3 \, 4) \rangle \times   \langle z \; | \; z^2=1 \rangle$	\\
 && \\
$G(48, \, 49)$ & $\mathsf{A}_4 \times \mathbb{Z}_2$ &  $\langle (1 \, 2)(3 \,
4), \, (1 \, 2 \, 3) \rangle \times  \langle x, \, y \; | \; x^2=y^2=1, \,
[x,\, y]=1 \rangle$    \\
 && \\
$G(48, \,50)$ & $(\mathbb{Z}_2)^4 \rtimes \mathbb{Z}_3$ &  $\langle a,\, b,
\, c, \, d, \, x \; | \; a^2=b^2=c^2=d^2= x^3=1, $ \\
&& $[a, \, b]=1, \, [a, \, c]=1, \, [a, \, d]=1, $ \\
&& $[b, \, c]=1, \, [b, \, d]=1, \, [c, \, d]=1, $ \\
&& $xax^{-1}= d, \, xbx^{-1}=bc, \, xcx^{-1}=b, \, xdx^{-1}=ad \rangle$ \\
&& \\
$G(48, \,51)$ & $\mathsf{S}_3 \times (\mathbb{Z}_2)^3$ &  $\langle (1\, 2), \,
(1 \, 2 \, 3) \rangle $ \\
&&  $\times \, \langle x, \, y, \, z \; | \; x^2=y^2=z^2=1,$ \\
&&  $[x, \, y]=1, \, [x, \, z]=1, \, [y, \, z]=1 \rangle $ \\
\bottomrule
\end{xltabular}

\newpage

 \begin{table}[H]
     \caption[caption]{Non-abelian groups of
    order $54$. }
  \label{table:54-nonabelian}
  \begin{center}
    \begin{tabularx}{0.85\linewidth}{@{}ccc@{}}
      \toprule
$\mathrm{IdSmallGroup}(G)$ & $G$ &
      $\mathrm{Presentation}$ \\
      \toprule
$G(54, \, 1)$ & $\mathsf{D}_{54}$ & $\langle x, \, y \; | \; x^2 = y^{27}
=1, xyx^{-1}= y^{-1} \rangle$ \\
      && \\
$G(54, \, 3)$ & $\mathsf{D}_{18}  \times \mathbb{Z}_3$ & $ \langle x, \,
y  \; | \; x^2=y^9=1, \, xyx^{-1} = y^{-1} \rangle \times   \langle z \; |
\; z^3=1  \rangle $\\
  &&\\
$G(54, \, 4)$ &   $\mathsf{S}_3 \times \mathbb{Z}_9$ & $ \langle (1\, 2), \,
(1\, 2\, 3) \rangle \times \langle z \; | \; z^9=1 \rangle$ \\
  && \\
$G(54,\,5)$ & $(\mathbb{Z}_3)^2  \rtimes \mathbb{Z}_6$ & $\langle a, \, b,
\, x \; | \; a^3=b^3=x^6=1,$ \\
      && $[a, \, b]=1, \, xax^{-1}=a^{-1}b^{-1}, \, xbx^{-1}=b^{-1} \rangle
      $ \\
&& \\
 $G(54, \, 6)$ & $\mathsf{D}_{6, \, 9, \, 2}$ &
      $\langle x, \, y \; | \;
      x^6=y^9=1, \, xyx^{-1}=y^2 \rangle$	\\
&& \\
$G(54,\,7)$ & $(\mathbb{Z}_3  \times \mathbb{Z}_9) \rtimes \mathbb{Z}_2$ &
$\langle a, \, b, \, x \; | \; a^3=b^9=x^2=1,$ \\
      && $[a, \, b]=1, \, xax^{-1}=a^{-1}, \, xbx^{-1}=b^{-1} \rangle $ \\
&& \\
$G(54, \, 8)$ & $((\mathbb{Z}_3)^2 \rtimes \mathbb{Z}_3) \rtimes \mathbb{Z}_2$
& $\langle x, \, y, \, z, \, w \; | \; x^3=y^3=z^3=w^2=1, $ \\
 && $ [x, \, y]=1, \, zxz^{-1}=xy^{-1}, wxw^{-1}=x^{-1}, $ \\
 && $[y, \, z]=1, \, [y, \, w]=1, \, wzw^{-1}=z^{-1} \rangle$ \\
&& \\
$G(54, \, 10)$ & $((\mathbb{Z}_3)^2 \rtimes \mathbb{Z}_3) \times \mathbb{Z}_2$
& $\langle x, \, y, \, z, \, w \; | \; x^3=y^3=z^3=w^2=1, $ \\
 && $ [x, \, y]=1, \, zxz^{-1}=xy^{-1}, \, [y, \, z]=1,$ \\
 && $[w, \, x]=1, \,  [w, \, y]=1, \, [w, \, z]=1 \rangle$ \\
&& \\
$G(54, \, 11)$ & $\mathsf{D}_{3, \, 9, \, 4}  \times \mathbb{Z}_2$ & $\langle
x, \, y \; | \; x^3=y^9=1, \, xyx^{-1} = y^4  \rangle  \times	  \langle z \;
| \; z^2=1 \rangle$ \\
 && \\
&& \\
 $G(54, \, 12)$ & $\mathsf{S}_3 \times (\mathbb{Z}_3)^2$ &  $\langle (1 \,
 2), \, (1 \, 2\, 3) \rangle \times \langle x, \, y \; | \; x^3=y^3=1, \,
 [x, \, y]=1  \rangle$ \\
&&\\
$G(54, \, 13)$ & $(\mathbb{Z}_3)^2 \rtimes \mathsf{S}_3$ & $\langle a, \,
b, \, x, \, y \; | \; a^3=b^3=x^2=y^3=1, $\\
&& $[a, \, b]=1, \, xyx^{-1}=y^{-1}, $ \\
&& $[x, \, a]=1, \, xbx^{-1}=b^{-1}, $\\
&& $[y, \, a]=1, \, [y, \, b]=1 \rangle$ \\
&&\\
$G(54, \, 14)$ & $(\mathbb{Z}_3)^3 \rtimes \mathbb{Z}_2$ & $\langle a, \,
b, \, c, \, x \; | \; a^3=b^3=c^3=x^2=1, $\\
&& $[a, \, b]=1, \, [a, \, c]=1, \, [b, \, c]=1, $ \\
&& $xax^{-1}=a^{-1}, \,  xbx^{-1}=b^{-1}, \, xcx^{-1}=c^{-1} \rangle$\\

\bottomrule
	   \end{tabularx}
  \end{center}
\end{table}

\newpage

\begin{table}[H]
	    \caption[caption]{Non-abelian groups of
    order $56$.}
   \label{table:56-nonabelian}
  \begin{center}
    \begin{tabularx}{0.85\textwidth}{@{}ccc@{}}
      \toprule
$\mathrm{IdSmallGroup}(G)$ & $G$ &
      $\mathrm{Presentation}$ \\
      \toprule
 $G(56, \, 1)$ & $\mathsf{D}_{8, \, 7, \, -1}$ &
      $\langle x, \, y \; | \;
      x^8=y^7=1, \, xyx^{-1}=y^{-1} \rangle$	\\
&& \\
$G(56, \, 3)$ & $\mathbb{Z}_7 \rtimes \mathsf{Q}_8$ &
      $\langle x, \, y \; | \;
      y^{28}=1, \, x^2=y^{14}, \, xyx^{-1}=y^{-1} \rangle$	\\
&& \\
$G(56, \, 4)$ & $\mathsf{D}_{14}  \times \mathbb{Z}_4$ & $ \langle x, \,
y  \; | \; x^2=y^7=1, \, xyx^{-1} = y^{-1} \rangle \times   \langle z \; |
\; z^4=1  \rangle $\\
&& \\
$G(56, \, 5)$ & $\mathsf{D}_{56} $ & $ \langle x, \, y	\; | \; x^2=y^{28}=1,
\, xyx^{-1} = y^{-1} \rangle  $\\
&& \\
$G(56, \, 6)$ & $\mathsf{D}_{4, \, 14, \, -1}$ &
      $\langle x, \, y \; | \;
      x^4=y^{14}=1, \, xyx^{-1}=y^{-1} \rangle$ \\
&& \\
$G(56, \, 7)$ & $\mathbb{Z}_7 \rtimes \mathsf{D}_8$ &
      $\langle a, \, x, \, y \; | \; a^7=x^2=y^4=1, \, xyx^{-1}=y^{-1}, $\\
   &&	 $ xax^{-1}=a^{-1},  \, yay^{-1}=a^{-1}
      \rangle$	\\
&& \\
$G(56, \, 9)$ & $\mathsf{D}_{8}  \times \mathbb{Z}_7$ & $ \langle x, \,
y  \; | \; x^2=y^4=1, \, xyx^{-1} = y^{-1} \rangle \times   \langle z \; |
\; z^7=1  \rangle $\\
&& \\
$G(56, \, 10)$ & $\mathsf{Q}_8	\times \mathbb{Z}_7$ & $\langle i, \, j, \, k\;
| \; i^2=j^2=k^2=ijk  \rangle  \times	  \langle z \; | \; z^7=1 \rangle$ \\
 && \\
 $G(56, \, 11)$ & $(\mathbb{Z}_2)^3 \rtimes \mathbb{Z}_7$ & $\langle a, \,
 b, \, c, \, x \; | \; a^2=b^2=c^2=x^7=1, $\\
&& $[a, \, b]=1, \, [a, \, c]=1, \, [b, \, c]=1, $ \\
&& $xax^{-1}=bc, \,  xbx^{-1}=a, \, xcx^{-1}=b \rangle$\\
&& \\
$G(56, \,12)$ & $\mathsf{D}_{14} \times (\mathbb{Z}_2)^2$ &  $\langle x, \,
y  \; | \; x^2=y^7=1, \, xyx^{-1} = y^{-1} \rangle $ \\
&&  $\times \, \langle a, \, b \; | \; a^2=b^2=1, \, [a, \, b]=1 \rangle$ \\
\bottomrule
	   \end{tabularx}
  \end{center}
 \end{table}

\newpage

\begin{table}[H]
     \caption[caption]{Non-abelian groups of
    order $60$.}
   \label{table:60-nonabelian}
  \begin{center}
    \begin{tabularx}{0.85\textwidth}{@{}ccc@{}}
      \toprule
      $\mathrm{IdSmallGroup}(G)$ & $G$ &
      $\mathrm{Presentation}$ \\
      \toprule
   $G(60, \, 1)$ & $\mathsf{D}_{4, \, 3, \, -1}   \times \mathbb{Z}_5$ &
   $\langle x, \, y \; | \; x^4=y^3=1, \, xyx^{-1}=y^{-1}  \rangle \times
   \langle z \; | \; z^5=1 \rangle$ \\
&& \\
  $G(60, \, 2)$ & $\mathsf{D}_{4, \, 5, \, -1}	 \times \mathbb{Z}_3$ &
  $\langle x, \, y \; | \; x^4=y^5=1, \, xyx^{-1}=y^{-1}  \rangle \times
  \langle z \; | \; z^5=1 \rangle$ \\
&& \\
 $G(60, \, 3)$ & $\mathsf{D}_{4, \, 15, \, -1}$ &
      $\langle x, \, y \; | \;
      x^4=y^{15}=1, \, xyx^{-1}=y^{-1} \rangle$ \\
&& \\
$G(60, \, 5)$ & $\mathsf{A}_5$ &
      $\langle (1 \, 2\, 3\, 4\, 5), \, (3 \, 4\, 5)  \rangle$	\\
&& \\
 $G(60, \, 6)$ & $\mathsf{D}_{4, \, 5, \, -2}	\times \mathbb{Z}_3$ &
 $\langle x, \, y \; | \; x^4=y^5=1, \, xyx^{-1}=y^{-2}  \rangle \times
 \langle z \; | \; z^5=1 \rangle$ \\
&& \\
  $G(60, \, 7)$ & $\mathsf{D}_{4, \, 15, \, -7}$ &
      $\langle x, \, y \; | \;
      x^4=y^{15}=1, \, xyx^{-1}=y^{-7} \rangle$ \\
&& \\
$G(60, \, 8)$ & $\mathsf{S}_3 \times \mathsf{D}_{10}$ & $\langle a, \, b,
\, x, \, y \; | \; a^2=b^3=x^2=y^5=1, $ \\
&& $aba^{-1}=b^{-1}, \, xyx^{-1}=y^{-1}, $ \\
&& $[a, \, x]=1, \, [a, \, y]=1,$ \\
&& $[b, \, x]=1, \, [b, \, y]=1 \rangle$ \\
&& \\
$G(60, \, 9)$ & $\mathsf{A}_4  \times \mathbb{Z}_5$ &$ \langle (1 \, 2)(3 \,
4), \, (1\, 2\, 3) \rangle \times   \langle z \; | \; z^5=1  \rangle $\\
 && \\
$G(60, \, 10)$ & $\mathsf{D}_{10}  \times \mathbb{Z}_6$ & $ \langle x, \,
y  \; | \; x^2=y^5=1, \, xyx^{-1} = y^{-1} \rangle \times   \langle z \; |
\; z^6=1  \rangle $\\
&& \\
 $G(60, \, 11)$ & $\mathsf{S}_3 \times \mathbb{Z}_{10}$ &  $\langle (1 \,
 2), \, (1 \, 2\, 3) \rangle \times \langle z \; | \; z^{10}=1 \rangle$ \\
& &\\
$G(60,\,12)$ & $\mathsf{D}_{60}$ & $\langle x, \, y \; | \; x^2=y^{30}=1,
\, xyx^{-1} = y^{-1}  \rangle$ \\
&& \\
  \bottomrule
	   \end{tabularx}
  \end{center}
 \end{table}











\begin{thebibliography}{1000}

\bibitem[At69]{At69}
    M. F. Atiyah: The signature of fibre bundles, in \emph{Global Analysis
    $($Papers in honor of K. Kodaira$)$} 73--84, Univ. Tokyo Press (1969).

\bibitem[BHPV03]{BHPV03}
W. Barth, K. Hulek, C.A.M. Peters, A. Van de Ven, \textit{Compact
Complex Surfaces}. Grundlehren der Mathematischen Wissenschaften,
Vol \textbf{4}, Second enlarged edition, Springer-Verlag, Berlin,
2003.

\bibitem[BD02]{BD02}
    J. Bryan, R. Donagi: Surface bundles over surfaces of small genus,
    \emph{Geom. Topol.} $\boldsymbol{6}$ (2002), 59--67.

  \bibitem[BDS01]{BDS01}
    J. Bryan, R. Donagi, A. I. Stipsicz: Surface bundles: some interesting
    examples, \emph{Turkish J. Math.} $\boldsymbol{25}$ (2001), no. 1, 61--68.

 \bibitem[Cat17]{Cat17}
    F. Catanese: Kodaira fibrations and beyond: methods for moduli theory,
    \emph{Japan. J. Math.} $\boldsymbol{12}$ (2017), no. 2, 91--174.

 \bibitem[CatRol09]{CatRol09}
    F. Catanese, S. Rollenske: Double Kodaira fibrations, \emph{J. Reine
    Angew. Math.} $\boldsymbol{628}$ (2009), 205--233.

\bibitem[CaPol21]{CaPol19}
 A. Causin, F. Polizzi: Surface braid groups, finite Heisenberg covers
 and double Kodaira fibrations, \emph{Ann. Sc. Norm. Super. Pisa Cl. Sci.}
 Vol. \textbf{XXII} (2021), 1309-1352

\bibitem[En98]{En98}
    H. Endo: A construction of surface bundles over surfaces with non-zero
    signature, \emph{Osaka J. Math.} $\boldsymbol{35}$ (1998), 915--930.

  \bibitem[EKKOS02]{EKKOS02}
    H. Endo, M. Korkmaz, D, Kotschick, B. Ozbagci, A. Stipsicz: Commutators,
    Lefschetz fibrations and the signature of surface bundles, \emph{Topology}
    $\boldsymbol{41}$ no. 5 (2002), 961--977.

 \bibitem[Fox57]{Fox57}
 R. H. Fox: Covering spaces with singularities, \emph{A symposium in honor
 of S. Lefschetz}, Princeton University Press (1957), 243--257


 \bibitem[GAP4]{GAP4} The GAP~Group, \emph{GAP -- Groups,Algorithms, and Programming, Version 4.11.1}; 2021.

\bibitem[GAP4DDKS]{GAP4DDKS} 
\url{https://github.com/pietros16bit/gap4ddks}

\bibitem[GG04]{GG04}
    D. L. Gon\c{c}alves, J. Guaschi: On the structure of surface
    pure braid
    groups, \emph{J. Pure Appl. Algebra} $\boldsymbol{186}$
    (2004), 187--218.

\bibitem[Hir69]{Hir69}
    F. Hirzebruch: The signature of ramified covers, in \emph{Global Analysis
    $($Papers in honor of K. Kodaira$)$} 253--265, Univ. Tokyo Press (1969).

\bibitem[Kas68]{Kas68}
    A. Kas: On deformations of a certain type of irregular algebraic surface,
    \emph{Amer. J. Math.} $\boldsymbol{90}$ (1968), 789--804.

\bibitem[LeBrun00]{LeBrun00}
    C. LeBrun: Diffeomorphisms, symplectic forms and Kodaira fibrations,
    \emph{Geom. Topol.} $\boldsymbol{4}$ (2000), 451--456.

 \bibitem[L17]{L17}
    J. A Lee: Surface bundles over surfaces with a fixed signature,
    \emph{J. Korean Math. Soc.} $\boldsymbol{54}$ (2017), no. 2, 545--561.

\bibitem[LLR20]{LLR17}
    J. A. Lee, M. L\"{o}nne, S. Rollenske: Double Kodaira
    fibrations with small
    signature, \emph{Internat. J. Math.} $\boldsymbol{31}$ (2020), no. 7,
    2050052, 42 pp.

\bibitem[Pol20]{Pol22}
    F. Polizzi: Diagonal double Kodaira structures on finite groups, \emph{Current 
    Trends in Analysis, its Applications and Computation}, Trends in Mathematics, 
    Birk\"{a}user, Cham (2022), 111-128. 

\bibitem[PolSab22]{PolSab22}
Extra-special quotients of surface braid groups and double Kodaira fibrations
with small signature, \emph{Geom. Dedicata} \textbf{216}, 65 (2022)

 \bibitem[Rol10]{Rol10}
    S. Rollenske: Compact moduli for certain Kodaira fibrations,
    \emph{Ann. Scuola
    Norm. Sup. Pisa Cl. Sci.} (5) Vol. IX (2010), 851-874.

 \bibitem[St02]{St02}
    A. I. Stipsicz: Surface bundles with nonvanishing signature, \emph{Acta
    Math. Hungar.} $\boldsymbol{95}$ (2002), no. 4, 299--307.

\bibitem[Zaal95]{Zaal95}
    C. Zaal: Explicit complete curves in the moduli space of curves of genus
    three, \emph{Geom. Dedicata} \textbf{56} (1995), 185--196 

\end{thebibliography}
\end{document}